\date{August 11, 2012}
\newtheorem{theorem}{Theorem}[section]
\newtheorem{proposition}[theorem]{Proposition}
\newtheorem{lemma}[theorem]{Lemma}
\newtheorem{corollary}[theorem]{Corollary}
\theoremstyle{definition}
\theoremstyle{remark}
\newtheorem*{remark}{Remark}
\numberwithin{equation}{section}
\newcommand{\da}{\downarrow}
\newcommand{\lo}{L^{(1)}_d}
\newcommand{\lk}{\left(}
\newcommand{\lz}{L^{(2)}_d(b)}
\newcommand{\N}{\mathbb{N}}
\newcommand{\R}{\mathbb{R}}
\newcommand{\rk}{\right)}
\newcommand{\Sph}{\mathbb{S}}
\newcommand{\tr}{\textnormal{Tr}}
\newcommand{\Z}{\mathbb{Z}}
\DeclareMathOperator{\dist}{dist}
\begin{document}

\title[Semi-classics with boundary conditions]{Semi-classical analysis of the Laplace operator\\ with Robin boundary conditions}

\author[R. L. Frank]{Rupert L. Frank}
\address{Rupert L. Frank, Department of Mathematics, Princeton University, Princeton, NJ 08544, USA}
\email{rlfrank@math.princeton.edu}

\author[L. Geisinger]{Leander Geisinger}
 \address{Leander Geisinger, Department of Mathematics, Princeton University, Princeton, NJ 08544, USA} \email{leander@princeton.edu}

\thanks{\copyright\, 2012 by the authors. This paper may be reproduced, in its entirety, for non-commercial purposes.}

\thanks{The authors wish to thank A. Laptev for stimulating their interest in this problem. U.S. NSF grants PHY-1068285 (R.F.) and PHY-1122309 (L.G.) and DFG grant GE 2369/1-1 (L.G.) are acknowledged.}

\begin{abstract}
We prove a two-term asymptotic expansion of eigenvalue sums of the Laplacian on a bounded domain with Neumann, or more generally, Robin boundary conditions. We formulate and prove the asymptotics in terms of semi-classical analysis. In this reformulation it is natural to allow the function describing the boundary conditions to depend on the semi-classical parameter and we identify and analyze three different regimes for this dependence.
\end{abstract}


\maketitle

\section{Introduction and main result}
\label{}


\subsection{Introduction}
The Laplace operator on a bounded domain $\Omega \subset \R^d$, $d \geq 2$, initially defined as a symmetric operator in $L^2(\Omega)$ with domain $C_0^\infty(\Omega)$, admits various  self-adjoint extensions that correspond to different boundary conditions. Our goal in this paper is to study how different boundary conditions influence the asymptotic behavior of the eigenvalues.

We consider self-adjoint extensions that are generated by a quadratic form
\begin{equation}
\label{eq:basicform}
\int_\Omega |\nabla v|^2 dx + \int_{\partial \Omega} c(x) |v(x)|^2 d\sigma(x) \, , \quad v \in H^1(\Omega) \, .
\end{equation}
Here the form domain $H^1(\Omega)$ is the Sobolev space of order $1$, $d\sigma$ denotes the $d-1$-dimensional surface measure on the boundary $\partial \Omega$, and $c$ is a bounded, real valued function on $\partial \Omega$. This quadratic form induces a unique self-adjoint operator $-\Delta_c$ in $L^2(\Omega)$ and functions from the domain of $-\Delta_c$ satisfy, in an appropriate sense, Robin boundary conditions
\begin{equation}
\label{eq:bcint}
\frac{\partial v}{\partial n_x} (x) =  c(x) v(x) \, , \quad x \in \partial \Omega \, ,
\end{equation}
where $\frac{\partial}{\partial n_x}$ denotes the inner normal derivative.  We remark that $c \equiv 0$ corresponds to the important case of Neumann boundary conditions. The Dirichlet Laplacian, generated by the quadratic form $\int_\Omega |\nabla v|^2 dx$ with form domain $H_0^1(\Omega)$, can be recovered formally by taking the limit $c \to \infty$.

If the boundary of $\Omega$ is sufficiently regulary (e.g., Lipschitz continuous), the spectrum of $-\Delta_c$ is purely discrete: It consists of a sequence of eigenvalues $\lambda_1 < \lambda_2 \leq \lambda_3 \leq \dots$ that accumulate at infinity only. Here we study how the asymptotic distribution of the eigenvalues depends on the boundary condition induced by the function $c$.

It is a classical result that the eigenvalues satisfy
\begin{equation}
 \label{eq:weyl}
\lambda_n = \frac{4\pi^2}{(\omega_d |\Omega|)^{2/d}} \, n^{2/d} + o(n^{2/d}) \quad \textnormal{as} \quad n \to \infty \, ,
\end{equation}
where $|\Omega|$ is the volume of $\Omega$ and $\omega_d$ denotes the volume of the unit ball in $\R^d$. In the case of Dirichlet boundary conditions these asymptotics go back to \cite{Wey12a}. They have been generalized in various ways, in particular, to the case of Robin boundary conditions \eqref{eq:bcint}; see, for instance, the lecture notes \cite{BirSol80}.

It has been conjectured by Weyl that \eqref{eq:weyl} is the beginning of an asymptotic expansion in $n$ and that the second term should depend on the surface area of $\Omega$. Initially, a weaker form of this conjecture has been verified, not for individual eigenvalues, but for smooth functions of the eigenvalues; see, e.g., \cite{Ple54,McKSin67}. For instance, \cite{BrGi90} computed in the case of boundary conditions \eqref{eq:bcint}
\begin{align}
 \label{eq:heat}
\sum_{j=1}^\infty e^{-t\lambda_j} = (4\pi t)^{-d/2} & \left( |\Omega|  + \frac{\sqrt\pi}2 |\partial\Omega|\ t^{1/2} + \frac13 \int_{\partial\Omega} \left(H(x) - 6 c(x)\right) d\sigma(x) \ t + O(t^{3/2}) \right) \notag \\
 & \textnormal{as} \quad t \to 0 \, .
\end{align}
Here $H(x)$ is the mean curvature (the trace of the second fundamental form) at $x\in\partial\Omega$. We see that the second term indeed depends on the surface area $|\partial \Omega|$ and is independent of $c$. The boundary condition enters only in the third order term. (For Dirichlet conditions, however, the sign of the second term flips.) In contrast to \eqref{eq:weyl}, the expansion \eqref{eq:heat} requires the boundary to be smooth.

A two-term asymptotic formula for individual eigenvalues was eventually shown in a celebrated work of V. Ivrii; see \cite{Ivr80a,Ivr80b,SafVas97,Ivr98}. He showed that, under a certain condition on the global geometry of $\Omega$ (and some smoothness conditions), one has for boundary conditions \eqref{eq:bcint}
\begin{equation}
\label{eq:evasympt}
\lambda_n = \frac{4\pi^2}{(\omega_d |\Omega|)^{2/d}} \, n^{2/d} - \frac {2\pi^2} d \frac{\omega_{d-1} |\partial \Omega|}{(\omega_d |\Omega|)^{1+1/d}} \, n^{1/d}+ o(n^{1/d}) \quad \textnormal{as} \quad n \to \infty \,.
\end{equation}
Again, for any bounded function $c$ the result is the same as for Neumann conditions. We emphasize that \eqref{eq:evasympt} implies the two-term analogue of \eqref{eq:heat}, but not vice versa.

In this paper we shall study an eigenvalue quantity which is intermediate between \eqref{eq:heat} and \eqref{eq:evasympt}, namely, partial sums $\sum_{j=1}^n \lambda_j$ as $n\to\infty$ or, equivalently, $\sum_{j=1}^\infty (\lambda_j - \mu)_-$ as $\mu\to\infty$. These partial sums describe the energy of non-interacting fermionic particles in $\Omega$ at fixed particle number $n$ or at fixed chemical potential $\mu$, respectively. They play an important role in physical applications.

Since the function $\lambda\mapsto(\lambda-\mu)_-$ is not smooth, we cannot expect that a three-term asymptotic expansion exists for these eigenvalue sums. Hence, to see the effect of boundary conditions already in the second term of the asymptotic expansion we have to choose energy-dependent boundary conditions. Let us state this problem in a semi-classical set-up. For a small parameter $h >0$ we define self-adjoint operators $H(b) = -h^2 \Delta_{b/h} - 1$ in $L^2(\Omega)$ generated by the quadratic form
\begin{equation}
\label{eq:formint}
q_b[v] = h^2 \int_\Omega |\nabla v|^2 dx + h \int_{\partial \Omega}
b(x) |v(x)|^2 d\sigma(x) - \int_{\Omega} |v(x)|^2 dx
\end{equation}
with form domain $H^1(\Omega)$. Here $b$ is a bounded function on $\partial \Omega$ that may also depend on $h$.  The quadratic form $q_b$  induces, in an appropriate sense, $h$-dependent boundary conditions
\begin{equation}
\label{eq:robin}
h \frac{\partial v}{\partial n_x} (x) =  b(x) v(x) \, , \quad x \in \partial \Omega \, .
\end{equation}
In this introduction, we denote by $E_n(b,h)$ the eigenvalues of the operator $-h^2\Delta_{b/h}$; consequently, the eigenvalues of $H(b)$ are given by $E_n(b,h) - 1$. As we explained, our main goal will be to study the sum of the negative eigenvalues of $H(b)$,
$$
\tr H(b)_- = \sum_{n\in \N} (E_n(b,h) - 1 )_- \, ,
$$
in the semiclassical limit $h \da 0$. We prove two-term asymptotics and show how the second term depends on the function $b$. Our analysis will show that the asymptotics has different forms in \emph{three different regimes} depending on the size of $b$ as $h\da 0$. The three different regimes are where $b\to 0$ as $h\da 0$, $b$ of order one as $h\da 0$ and $|b|\to\infty$ as $h\da 0$.

As an example of the first regime, let us consider the case $b = h c$ with a bounded function $c$ independent of $h$. This corresponds to the classical situation discussed above, where the boundary condition \eqref{eq:robin} is independent of $h$ and therefore the eigenvalues $E_n(b,h)=h^2 \lambda_n$ depend trivially on $h$. Then \eqref{eq:evasympt} implies
\begin{equation}
\label{eq:evasymptsum}
\frac1n\sum_{j=1}^n \lambda_j = \frac{4\pi^2}{(\omega_d |\Omega|)^{2/d}} \frac{d}{d+2} \, n^{2/d} - \frac {2\pi^2} d \frac{\omega_{d-1} |\partial \Omega|}{(\omega_d |\Omega|)^{1+1/d}} \frac{d}{d+1} \, n^{1/d}+ o(n^{1/d}) \quad \textnormal{as} \quad n \to \infty \, ,
\end{equation}
and this is equivalent, by a simple majorization argument, to
\begin{equation}
\label{eq:neumann}
\tr H(b)_- = L_d^{(1)} |\Omega| h^{-d} + \frac 14 L_{d-1}^{(1)} |\partial \Omega| h^{-d+1} + o(h^{-d+1}) \quad \textnormal{as} \quad h \da 0
\end{equation}
with $L_d^{(1)} = \frac{2}{d+2} (2\pi)^{-d} \omega_d$. Of course, we find again that the first two terms of the asymptotics are independent of the boundary condition. As we shall see, this is characteristic for the whole regime where $b\to 0$ as $h\da 0$. We emphasize that as a byproduct of our analysis we establish \eqref{eq:neumann} independently, without using \eqref{eq:evasympt}; see Theorem \ref{thm:small}. This includes, as a special case, the Neumann Laplacian.

Among the three regimes mentioned above, the technically most interesting one is when $b$ is independent of $h$. In this case the second term of the semi-classical limit of $\tr H(b)_-$ does depend on the local behavior of $b(x)$; see Theorem \ref{thm:1} below.

Finally, in Theorem \ref{thm:large}, we consider functions $b$ such that $|b|$ diverges as $h \da 0$. In this case, the form of the asymptotics depends on whether $b$ is negative somewhere or whether $b$ is non-negative. In the first case, the asymptotics are determined by the negative part of $b$ alone. Moreover, if $b$ diverges fast enough, then the boundary term becomes the leading term and diverges faster than the Weyl term. On the other hand, when $b$ is non-negative the order of the second term is preserved but the coefficient may change.

We obtain these results by further extending the approach developed in \cite{FraGei11,FraGei12}, where we treated the Dirichlet Laplacian and the fractional Laplacian on a domain. One virtue of this approach is that it requires only rather weak regularity assumptions on $\partial\Omega$ and $b$. Essentially, a $C^1$ assumption on $\partial\Omega$ and on $b$ suffices for a two-term asymptotics.

We now turn to a more precise description of our assumptions and results.


\subsection{Main Results}
Let $\Omega\subset\R^d$, $d\geq 2$, be a bounded domain such that the boundary satisfies a uniform $C^1$ condition. That is, the local charts of $\partial\Omega$ are differentiable and their derivatives are uniformly continuous and share a common modulus of continuity; see (\ref{eq:red:fest}) for a precise definition.
Moreover, we assume that the boundary coefficient $b$ is a continuous, real-valued function on $\partial\Omega$ and we denote a modulus of continuity by $\beta$, i.e.,
\begin{equation}
\label{eq:hoelder}
\left| b(x) - b(y) \right| \leq \beta( |x-y|)
\end{equation}
for all $x,y \in \partial \Omega$. We assume that $\beta$ is non-decreasing.

We remark that the boundary conditions \eqref{eq:robin} for functions in the operator domain of $H(b)$ need not hold in the classical sense under these weak assumptions on the boundary. For $\partial \Omega \in C^1$, however, this operator can still be defined by means of the quadratic form $q_b$ and the characterization of the operator domain in terms of the form domain gives a weak sense in which \eqref{eq:robin} are valid. This suffices for our proof.

For a constant $b \in \R$ we set 
\begin{align}
\label{eq:lz}
 \lz = \begin{cases} C_d  \lk - \frac \pi 4 + \int_0^1 (1-p^2)^{(d+1)/2} \frac{b}{b^2+p^2} dp \rk & \textnormal{for} \ b > 0 \, ,\\
 C_d \,\frac \pi 4 &  \textnormal{for} \ b = 0 \, ,  \\
         C_d  \lk - \frac \pi 4 + \int_0^1 (1-p^2)^{(d+1)/2} \frac{b}{b^2+p^2} dp  +  \pi (b^2+1)^{(d+1)/2} \rk & \textnormal{for} \ b < 0 \, ,
       \end{cases}
\end{align}
where $C_d = 4 |\Sph^{d-2}| (2\pi)^{-d} (d^2-1)^{-1}$.
This expression comes from the explicit diagonalization of a one-dimensional model operator; see Section \ref{sec:half}. Although it is not obvious from the definition, the function $\lz$ is continuously differentiable and non-increasing; see Lemma~\ref{lem:lz} and the remark after Proposition \ref{pro:half}. In particular, for $b > 0$, we have
\begin{equation}
\label{eq:continuity}
-\frac 14 L^{(1)}_{d-1} \, = \, \lim_{b \to \infty} L^{(2)}_d(b)  \leq \, \lz \, \leq \, \lim_{b \da 0} L^{(2)}_d(b) \,= \, L^{(2)}_d(0) \, = \, \frac 14 L^{(1)}_{d-1}
\end{equation}
with $L^{(1)}_{d-1}$ defined after \eqref{eq:neumann}.

To control error terms we have to introduce a non-decreasing function $\delta: [0,\|b\|_\infty]\to [0,\infty)$ such that
\begin{equation}
\label{eq:delta}
\delta(\lambda) \geq \left| \{ x \in \partial \Omega \, : \, 0<|b(x)| < \lambda \} \right|
\end{equation}
for all $0<\lambda \leq \|b\|_\infty$.

Our first main result is the following.

\begin{theorem}
\label{thm:1}
Let $\partial \Omega \in C^1$ and assume that $b$ satisfies \eqref{eq:hoelder} and \eqref{eq:delta} with $\beta(l)=o(1)$ and $\delta(l)=o(1)$ as $l \da 0$.
We write
$$
\tr (H(b))_- \, = \, \lo \, |\Omega| \, h^{-d} + \int_{\partial \Omega} L^{(2)}_d(b(x)) d\sigma(x) \, h^{-d+1} + R_h \, .
$$
Then, for an $h$-independent domain $\Omega$, a given $h$-independent upper bound on $\|b\|_\infty$ and given $h$-independent $\beta$ and $\delta$, the asymptotics 
$$
R_h = o(h^{-d+1})
$$
holds uniformly in $b$ satisfying these conditions. 
\end{theorem}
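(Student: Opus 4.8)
The plan is to follow the now-standard two-step scheme: first localize the problem to small boundary cubes and interior cubes, then compare the contribution of each boundary cube with the half-space model operator whose spectral quantities produce exactly the coefficients $\lo$ and $\lz$. Concretely, I would fix a length scale $\ell = \ell(h)$ with $h \ll \ell \ll 1$ (eventually something like $\ell = h^{1-\epsilon}$, or $\ell$ chosen in terms of the moduli $\beta,\delta$ and the modulus of continuity of the charts), and use a partition of unity adapted to a grid of cubes of side $\ell$. A variational argument (the Aizenman--Lieb--Temple / coherent-state bracketing used in \cite{FraGei11,FraGei12}, with IMS localization to absorb the $\sum|\nabla\varphi_j|^2 \lesssim \ell^{-2}$ error) reduces $\tr H(b)_-$ to a sum over cubes of local traces, with a total localization error of order $\ell^{-2} \cdot (\text{number of relevant cubes}) \cdot h^{-d+2} \lesssim \ell^{-2} h^{2} \cdot (|\Omega| h^{-d} + |\partial\Omega|\ell^{-1} h^{-d+1})$, which is $o(h^{-d+1})$ provided $\ell^{-1} h \to 0$ and $\ell \to 0$ appropriately.

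Next, for an interior cube I would use the standard Weyl bound: each contributes $\lo h^{-d}\ell^d + o(\cdot)$, and summing gives the bulk term $\lo |\Omega| h^{-d}$ with acceptable error (this is essentially the known two-term Dirichlet/Neumann bulk asymptotics from the earlier papers). For a boundary cube centered at $x_0 \in \partial\Omega$, I would flatten the boundary using the $C^1$ chart, replace $b$ by the constant $b(x_0)$, and compare with the half-space operator on $\R^{d-1}\times\R_+$ with constant Robin coefficient; Proposition~\ref{pro:half} (the explicit one-dimensional diagonalization) gives that this model contributes $\lo h^{-d}\ell^d$ plus a boundary term $L^{(2)}_d(b(x_0)) h^{-d+1}\ell^{d-1}$. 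Summing over boundary cubes yields $\int_{\partial\Omega} L^{(2)}_d(b(x))\,d\sigma(x)\, h^{-d+1}$ up to an error controlled by: (i) the oscillation of $b$ across a cube, handled via monotonicity of $\lz$ (so $|L^{(2)}_d(b(x))-L^{(2)}_d(b(x_0))| \to 0$ by continuity of $\lz$ and $\beta(\ell)\to 0$), except where $b$ crosses zero, where $\lz$ is only continuous not smooth and $b^2+p^2$ in the denominator can be small — this is exactly what $\delta$ is for, since the bad set has measure $\le \delta(\beta(\ell)) \to 0$; and (ii) the deviation of the flattened geometry from a genuine half-space, controlled by the common modulus of continuity of the charts in (\ref{eq:red:fest}). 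Throughout, one keeps the upper and lower bracketing estimates symmetric so that the same bounds give matching upper and lower bounds on $\tr H(b)_-$.

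Finally, uniformity: every error estimate above depends on $b$ only through $\|b\|_\infty$, $\beta$, and $\delta$ (and on $\Omega$ through its fixed geometry), so once $\ell(h)$ is chosen as a function of these fixed data, the bound $R_h = o(h^{-d+1})$ holds uniformly over the stated class — this requires nothing beyond checking that no hidden dependence on the particular $b$ enters, which is automatic since all model computations use only the constant $b(x_0)\in[-\|b\|_\infty,\|b\|_\infty]$ and the (uniform) continuity properties of $L^{(2)}_d$ on that compact interval. The main obstacle I expect is the interplay between the three scales near the zero set of $b$: one must choose $\ell(h)$ small enough that $\beta(\ell)$ and $\delta(\beta(\ell))$ are negligible, yet large enough that $h/\ell \to 0$ kills the localization error, and simultaneously ensure the half-space approximation error (which scales like a modulus of continuity of the chart evaluated at $\ell$, times $h^{-d+1}$) is $o(h^{-d+1})$. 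Balancing these — and in particular verifying that the non-smoothness of $\lz$ at $b=0$ does not spoil the boundary integral because the offending set shrinks in measure via $\delta$ — is the delicate point; everything else is a careful but routine adaptation of \cite{FraGei11,FraGei12}.
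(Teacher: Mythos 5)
Your outline follows the same general bracketing-plus-half-space-comparison scheme as the paper, but the way you treat the zero set of $b$ has a genuine gap. The function $\delta$ in \eqref{eq:delta} controls only the measure of $\{x:\ 0<|b(x)|<\lambda\}$; the set $\{b=0\}$ itself may have positive surface measure (e.g.\ Neumann conditions on a whole portion of $\partial\Omega$). On cubes meeting that set you can neither invoke the two-term half-space estimate of Proposition~\ref{pro:half} -- its error carries a factor $1/|b|$, which is the real obstruction, not a lack of smoothness of $\lz$ at $0$ (by Lemma~\ref{lem:lz} the function $\lz$ is in fact $C^1$) -- nor simply discard those cubes as a ``bad set of measure $\le\delta(\beta(\ell))$'': discarding them would lose a boundary contribution of size $|\{b=0\}|\,h^{-d+1}$, which is not $o(h^{-d+1})$. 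What is needed there is the separate small-$b$ two-term estimate (the second part of Proposition~\ref{pro:half}, i.e.\ \eqref{eq:bd:2}), valid when $\sup|b|\le h/\ell$ on the patch, which produces the constant $\tfrac14 L^{(1)}_{d-1}=L^{(2)}_d(0)$ and matches the integrand up to $O(\beta(\ell))$; in addition one needs a rough one-term bound (Lemma~\ref{lem:boundary}) for the transition cubes where $0<|b|<\lambda$ but $|b|$ is not yet $\lesssim h/\ell$, whose total contribution is then controlled by $\delta(\lambda)h^{-d+1}$ via the geometric Lemma~\ref{lem:parallel}. This is exactly the paper's three-way splitting $U_0$, $U^*$, $U_>$, and your proposal is missing the $U_0$-type analysis entirely.

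A second, related problem is your choice of localization. The small-$b$ estimate requires the frozen coefficient to satisfy $|b|\le h/\ell$, and since on such cubes one only knows $|b|\le\beta(\ell)$, this forces the boundary scale to be of order $h$ (so that $\beta(C h/\mu)\le\sqrt3\mu\le h/\ell$ eventually). But your uniform grid of side $\ell$ produces an IMS error of order $\ell^{-2}h^{-d+2}$ over the bulk, forcing $\ell\gg h^{1/2}$ -- incompatible with $\ell\sim h$, and also incompatible with controlling the error $\ell^{d}h^{-d}\,|b|(1+|\ln|b||)$ in \eqref{eq:bd:2} after summation, since $\beta(\ell)\,\ell/h$ need not vanish for a general modulus $\beta$. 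The paper resolves this with the variable-scale localization of Section~\ref{sec:loc:s1}: balls of radius $\sim h/\mu$ near $\partial\Omega$ growing linearly with the distance to the boundary, so the total localization cost is only $\sim\mu h^{-d+1}$ (plus the $(b_m)_-$ correction), which is removed by sending $\mu\to0$ after $h\to0$, together with the auxiliary parameter $\lambda\to0$ controlling $\delta(\lambda)$. Without either this multiscale device or some substitute, the scale-balancing you describe in your last paragraph cannot be carried out.
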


In other words, in this theorem we claim that $R_h = o(h^{-d+1})$ if $b$ is independent of $h$. 
Moreover, we claim that these asymptotics are valid even if $b$ depends on $h$, as long as it can be controlled in some uniform way. More precisely, we prove that given $\beta$ and $\delta$ (both non-decreasing and vanishing at zero) and constants  $C>0$ and $\varepsilon > 0$, there is an $h_\varepsilon > 0$ such that $|R_h| \leq \varepsilon h^{-d+1}$ for all $0 < h \leq h_\varepsilon$ and all $b$ satisfying $\|b\|_\infty \leq C$, \eqref{eq:hoelder} and \eqref{eq:delta}. Our proof would also allow us to consider $h$-dependent domains $\Omega$, but we do not track the dependence of the constants in terms of $\Omega$ for the sake of simplicity.

Our next result concerns the case where $\|b\|_\infty \to 0$ as $h\da 0$. We will see that the asymptotics are the same as in Theorem \ref{thm:1} with $b=0$. We cannot apply Theorem \ref{thm:1}, however, since for $b\not\equiv 0$ we cannot choose $\delta$ independent of $h$ such that \eqref{eq:delta} is satisfied and $\delta(\lambda)=o(1)$ as $\lambda \da 0$. Moreover, we can dispense with the assumption that $b$ is continuous.

\begin{theorem}
\label{thm:small}
Let $\partial \Omega \in C^1$ and assume that $b = \theta(h) b_0$  with $\theta(h) = o(1)$ as $h \da 0$ and with a bounded function $b_0$. We write
$$
\tr (H(b))_- \, = \, \lo \, |\Omega| \, h^{-d} + \frac 14 L^{(1)}_{d-1} |\partial \Omega| \, h^{-d+1} + R_h \, .
$$
Then, for an $h$-independent domain $\Omega$ and a given $h$-independent upper bound on $\|b_0\|_\infty$, the asymptotics
$$
R_h = o(h^{-d+1})
$$
holds uniformly in $b$ satisfying these conditions.
\end{theorem}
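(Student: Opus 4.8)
The plan is to reduce Theorem \ref{thm:small} to Theorem \ref{thm:1}, or rather to the uniform estimate underlying it, by a monotonicity-in-$b$ sandwich argument. The key observation is that the quadratic form $q_b$ is monotone in $b$: if $b_1 \le b_2$ pointwise on $\partial\Omega$, then $q_{b_1}[v] \le q_{b_2}[v]$ for all $v \in H^1(\Omega)$, hence $\tr H(b_2)_- \le \tr H(b_1)_-$. Given $b = \theta(h) b_0$ with $\|b_0\|_\infty \le M$, we have $|b(x)| \le |\theta(h)| M =: \eta(h)$ with $\eta(h) = o(1)$. Therefore, writing $\pm\eta(h)$ for the constant functions, monotonicity gives
$$
\tr H(\eta(h))_- \;\le\; \tr H(b)_- \;\le\; \tr H(-\eta(h))_- \,.
$$
So it suffices to prove the desired two-term asymptotics, with boundary coefficient $\tfrac14 L^{(1)}_{d-1}|\partial\Omega|$, for the \emph{constant} coefficients $b \equiv \pm\eta(h)$, where now the only $h$-dependence is through the single number $\eta(h)\to 0$.

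For constant $b$, Theorem \ref{thm:1} does not literally apply, because one cannot choose $\delta$ independent of $h$ with $\delta(\lambda) = o(1)$ (indeed $\{x : 0 < |b(x)| < \lambda\}$ is either empty or all of $\partial\Omega$). However, for a constant coefficient $b_c \ne 0$ one can simply take $\delta(\lambda) = |\partial\Omega|\,\mathbf 1_{\{\lambda > |b_c|\}}$ and $\beta \equiv 0$; what one then needs is that the uniform remainder estimate from Theorem \ref{thm:1} degrades in a controlled way as $|b_c|\to 0$, i.e. that the implied $h_\varepsilon$ can be taken uniform over all small constants $b_c$. This is exactly the content of the uniformity clause already built into Theorem \ref{thm:1}: since we are allowed to vary $b$ over a class with fixed $\|b\|_\infty$-bound, fixed $\beta \equiv 0$ and fixed $\delta$, and since the boundary integral $\int_{\partial\Omega} L^{(2)}_d(b_c)\,d\sigma = |\partial\Omega|\,L^{(2)}_d(b_c)$ converges to $\tfrac14 L^{(1)}_{d-1}|\partial\Omega|$ as $b_c \to 0$ by the continuity \eqref{eq:continuity} of $L^{(2)}_d$ at $0$, the error
$$
\tr H(b_c)_- - \lo|\Omega|h^{-d} - \tfrac14 L^{(1)}_{d-1}|\partial\Omega|h^{-d+1}
= R_h^{(1)} + \bigl(L^{(2)}_d(b_c) - \tfrac14 L^{(1)}_{d-1}\bigr)|\partial\Omega|h^{-d+1}
$$
is $o(h^{-d+1})$ uniformly in $b_c$ with $|b_c| \le M$. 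Applying this with $b_c = \pm\eta(h)$ and using $\eta(h) = o(1)$ so that $L^{(2)}_d(\pm\eta(h)) \to \tfrac14 L^{(1)}_{d-1}$, both the lower and upper bounds in the sandwich have the claimed asymptotics, and the theorem follows.

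The main obstacle, and the reason this is stated as a separate theorem rather than a corollary, is the subtle issue of which quantities the uniformity in Theorem \ref{thm:1} is really uniform over: one must be sure that the proof of Theorem \ref{thm:1} does not hide a dependence of $h_\varepsilon$ on the particular function $b$ beyond $\|b\|_\infty$, $\beta$ and $\delta$ — in particular that nothing blows up as $b$ approaches $0$ (for instance in the analysis of the one-dimensional model operator near $b = 0$, where $L^{(2)}_d(b)$ has a corner). If, instead, one prefers a self-contained argument, the alternative is to run the localization/model-operator machinery of \cite{FraGei11,FraGei12} directly for $b = \theta(h)b_0$: cover a neighborhood of $\partial\Omega$ by boundary boxes, on each box compare $H(b)$ with the half-space model operator with the frozen constant coefficient, and use that the half-space remainder — by Proposition \ref{pro:half} and the continuity and monotonicity of $L^{(2)}_d$ — is uniformly small as the frozen constant tends to $0$, while the bulk contribution is the standard Weyl term; then sum over boxes. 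In either route, the estimate \eqref{eq:continuity} is what guarantees that the vanishing of $b$ produces precisely the $b\equiv 0$ boundary coefficient and no spurious term.
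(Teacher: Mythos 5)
Your monotonicity sandwich reducing to the constant coefficients $b_c=\pm\eta(h)$ is correct, but the step where you then invoke the uniformity clause of Theorem \ref{thm:1} contains a genuine gap. The uniformity in Theorem \ref{thm:1} is only over coefficients satisfying \eqref{eq:hoelder} and \eqref{eq:delta} with \emph{one fixed} $h$-independent pair $(\beta,\delta)$, both vanishing at $0$. For a nonzero constant $b_c$ the set in \eqref{eq:delta} equals all of $\partial\Omega$ as soon as $\lambda>|b_c|$, so any single $\delta$ admissible for the whole family of small nonzero constants would have to satisfy $\delta(\lambda)\geq|\partial\Omega|$ for every $\lambda>0$ and hence cannot be $o(1)$ as $\lambda\da 0$; this is exactly the obstruction the paper points out when it says Theorem \ref{thm:1} cannot be applied in this regime, and it is why Theorem \ref{thm:small} is proved separately. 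Nor is this a hidden-bookkeeping issue that one can "be sure" works out: the remainder in the first part of Proposition \ref{pro:boundary} (equivalently of Proposition \ref{pro:half}) contains the factor $1/b^i$ (resp.\ $1/|b|$) and genuinely blows up as the coefficient tends to zero, and the proof of Theorem \ref{thm:1} controls small $|b|$ only through the measure $\delta(\lambda)$ of the set where $b$ is small -- which gives nothing when $b$ is a small constant. So the asserted uniform $o(h^{-d+1})$ for $b_c=\pm\eta(h)\to 0$ is precisely the nontrivial content still to be proved, and your main route does not supply it.

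What closes the gap (and is the paper's actual proof) is the second estimate \eqref{eq:bd:2} of Proposition \ref{pro:boundary}, coming from the second part of Proposition \ref{pro:half}, valid when $b^s\leq h/l$ and with remainder involving $b^s(1+|\ln b^s|)$ instead of $1/b^i$. Taking $l_0=h\mu^{-1}$ one has $h/l(u)\geq \sqrt3\,\mu$ on all boundary balls, and since $\|b\|_\infty=\theta(h)\|b_0\|_\infty=o(1)$ this smallness condition eventually holds everywhere on $\partial\Omega$; combining the localization of Proposition \ref{pro:loc:s1} with \eqref{eq:bk} in the bulk and \eqref{eq:bd:2} at the boundary yields a remainder of order $h^{-d+1}\bigl(\mu+\omega(Ch/\mu)\mu^{-1}+\|b\|_\infty(1+|\ln\|b\|_\infty|)\mu^{-1}\bigr)$, after which one lets $h\da 0$ and then $\mu\da 0$. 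Your "self-contained alternative" gestures toward this, but it misses the key point that the frozen-coefficient half-space input must be the small-$b$ estimate: the claim that the half-space remainder is uniformly small as the frozen constant tends to $0$ "by Proposition \ref{pro:half}" is false for the first part of that proposition. Note finally that once \eqref{eq:bd:2} is used, the sandwich by constants is unnecessary (it handles variable small $b$ directly through $b^s_u$), which is also why Theorem \ref{thm:small} needs no continuity assumption on $b_0$.
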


We refer to \eqref{eq:rem2} for an explicit bound on $R_h$.

Our third result concerns the case where $b = \Theta(h) b_0$ with $\Theta(h)\to\infty$.

\begin{theorem}
\label{thm:large}
Let $\partial \Omega \in C^1$. Assume $b = \Theta(h) b_0$ with $\Theta^{-1}(h) = o(1)$ as $h \da 0$ and with $b_0$ satisfying \eqref{eq:hoelder} with $\beta(l)=o(1)$ as $l\da 0$. We write
$$
\tr (H(b))_- \, = \, \lo \, |\Omega| \, h^{-d} + \pi C_d \int_{\partial \Omega} b(x)_-^{d+1}  d\sigma(x) \, h^{-d+1} + R_h \, .
$$
Then, for an $h$-independent domain $\Omega$, a given $h$-independent upper bound on $\|b_0\|_\infty$ and a given $h$-independent $\beta$, the asymptotics
$$
R_h = o(\Theta(h)^{d+1}h^{-d+1})
$$
holds uniform in $b$ satisfying these conditions. 

If, in addition, $b(x) \geq 0$ for all $x \in \partial \Omega$, $\delta(\lambda) = o(1)$ as $\lambda \da 0$, and $\beta(Mh)\Theta(h) = o(1)$ as $h \da 0$ for every fixed $M>0$, then
$$
\tr (H(b))_- \, = \, \lo \, |\Omega| \, h^{-d} - \frac 14 L^{(1)}_{d-1} \, |\partial \Omega_+| \, h^{-d+1} + \frac 14 L^{(1)}_{d-1} \, |\partial \Omega_0| \, h^{-d+1} + o(h^{-d+1}) \, ,
$$
where $\partial \Omega_+ = \{x \in \partial \Omega \, : \, b(x) > 0 \}$ and $\partial \Omega_0 = \{x \in \partial \Omega \, : \, b(x) = 0 \}$. 
\end{theorem}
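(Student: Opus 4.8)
The plan is to deduce both parts from the same localization method that proves Theorems~\ref{thm:1} and~\ref{thm:small}. Via a Dirichlet--Neumann bracketing one writes $\tr(H(b))_-$ as the contribution of an interior region $\Omega_{\mathrm{int}}=\{x\in\Omega:\dist(x,\partial\Omega)>\ell\}$ plus that of a collar of width $\ell=\ell(h)$ cut into small pieces by a cubic grid: on $\Omega_{\mathrm{int}}$ the coefficient $b$ is invisible and coherent-state bounds give the Weyl term, while on each boundary piece one flattens $\partial\Omega$ with a $C^1$-chart, freezes $b$ at a representative point, and invokes the explicit diagonalization of the one-dimensional model operator from Section~\ref{sec:half} (Proposition~\ref{pro:half}), which contributes $L^{(2)}_d(b(x))\,h^{-d+1}$ per unit area of flattened boundary. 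Summing and recombining the interior Weyl terms yields, for a suitable $\ell$,
\[
\tr(H(b))_- \;=\; \lo\,|\Omega|\,h^{-d} \;+\; \int_{\partial\Omega} L^{(2)}_d(b(x))\,d\sigma(x)\;h^{-d+1} \;+\; R_h ,
\]
where $R_h$ collects the flattening, bracketing and freezing errors; since $b=\Theta(h)b_0$, the oscillation of $b$ over a piece of side $\ell$ is at most $\Theta(h)\beta(\ell)$, and freezing $b$ costs this times $\sup|({L^{(2)}_d})'|$ over the range of $b$. The issue is to make $R_h$ small when $\|b\|_\infty=\Theta(h)\|b_0\|_\infty\to\infty$.

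For the first part the target $R_h=o(\Theta(h)^{d+1}h^{-d+1})$ is far coarser than $o(h^{-d+1})$, so no refined treatment near $\{b=0\}$ (and hence no use of $\delta$) is needed: one chooses $h\ll\ell(h)\ll1$, small enough relative to $\Theta(h)$ that every error is of lower order than $\Theta(h)^{d+1}h^{-d+1}$. This works because, by~\eqref{eq:lz}, $|({L^{(2)}_d})'(b)|=O(1+|b|^{d})$ and $|b|\le C\Theta(h)$, so the freezing error is $O\big(\Theta(h)^{d}\cdot\Theta(h)\beta(\ell)\big)h^{-d+1}$, while the flattening, bracketing and Riemann-sum errors multiply the leading terms (of order $\Theta(h)^{d+1}h^{-d+1}$) by factors tending to $0$. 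It then remains to pass from $\int_{\partial\Omega}L^{(2)}_d(b(x))\,d\sigma$ to $\pi C_d\int_{\partial\Omega}b(x)_-^{d+1}\,d\sigma$: reading off~\eqref{eq:lz}, the leading $\pi C_d|b|^{d+1}$ of the $b<0$ branch is exactly $\pi C_d\,b_-^{d+1}$, and the remainder satisfies $L^{(2)}_d(b)-\pi C_d\,b_-^{d+1}=O(1+|b|^{d-1})$, whose integral over $\partial\Omega$ is $O(1+\Theta(h)^{d-1})=o(\Theta(h)^{d+1})$. This gives the first formula.

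For the second part $b_-\equiv0$, so the boundary integral from the first part disappears and we need the sharper $R_h=o(h^{-d+1})$. Here one runs the localization at the Dirichlet boundary scale $\ell=M(h)h$, with $M(h)\to\infty$ chosen by a diagonal argument slowly enough that also $\ell\to0$ and $\Theta(h)\beta(M(h)h)=o(1)$ --- the latter being exactly what the hypothesis ``$\beta(Mh)\Theta(h)=o(1)$ for every fixed $M$'' supplies. Since $b\ge0$, the range of $b$ is $[0,\|b\|_\infty]$, on which $({L^{(2)}_d})'$ is \emph{bounded} and $|L^{(2)}_d|\le\tfrac14 L^{(1)}_{d-1}$ by~\eqref{eq:continuity}; hence on pieces with $b\gtrsim 1/M(h)$ --- so that the boundary layer of width $\sim h/b$ fits inside the piece --- the half-space model of Section~\ref{sec:half} reproduces the density $L^{(2)}_d(b(x))$ with relative error $o(1)$, and the freezing error is $O\big(\Theta(h)\beta(M(h)h)\big)h^{-d+1}=o(h^{-d+1})$; the same holds on pieces with $b\equiv0$ (Neumann). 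On the remaining pieces, where $0<b\lesssim 1/M(h)$, one sandwiches crudely between the cube-traces obtained by replacing $b$ there by $0$ and by its maximum on the piece; these pieces sit over a boundary set of measure $o(1)$ (here~\eqref{eq:delta} and $\delta(\lambda)=o(1)$ enter) and carry densities bounded by $\tfrac14L^{(1)}_{d-1}$, so their total contribution is $o(h^{-d+1})$. This yields the displayed two-term formula with $R_h=o(h^{-d+1})$. Finally, as $h\da0$ we have $b(x)=\Theta(h)b_0(x)\to+\infty$ for every $x$ with $b_0(x)>0$ while $b\equiv0$ on $\partial\Omega_0$, so dominated convergence (dominating constant $\tfrac14L^{(1)}_{d-1}$, by~\eqref{eq:continuity}), together with $L^{(2)}_d(b)\to-\tfrac14L^{(1)}_{d-1}$ as $b\to+\infty$ and $L^{(2)}_d(0)=\tfrac14L^{(1)}_{d-1}$, gives
\[
\int_{\partial\Omega}L^{(2)}_d(b(x))\,d\sigma(x)\;\longrightarrow\;-\tfrac14L^{(1)}_{d-1}\,|\partial\Omega_+|+\tfrac14L^{(1)}_{d-1}\,|\partial\Omega_0| ,
\]
which is the claimed second term.

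The main obstacle is the second part: producing the two-term formula with a remainder that is genuinely $o(h^{-d+1})$ uniformly. One must reconcile, in a single diagonal choice, the semiclassical requirement $\ell/h\to\infty$, the flattening requirement $\ell\to0$, the smallness $\Theta(h)\beta(\ell)\to0$, and the vanishing of $\delta$ at the threshold $\sim 1/M(h)$; and, most delicately, one must verify that on boundary pieces where $b$ is positive but possibly much larger than $1$ the flattened half-space problem is an $o(1)$-relative perturbation of the Dirichlet one --- i.e.\ that the extremely thin boundary layer of width $\sim h/b\ll\ell$ is captured correctly. The first part, by contrast, is essentially the bookkeeping of $\Theta(h)$-powers on top of the proof of Theorem~\ref{thm:1}, plus the elementary large-$|b|$ expansion of $L^{(2)}_d$ read off from~\eqref{eq:lz}.
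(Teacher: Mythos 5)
Your overall architecture is the paper's: localize, flatten the boundary, freeze $b$, invoke the half-space model of Section~\ref{sec:half}, use the properties of $L^{(2)}_d$ (derivative bound, large-$|b|$ expansion, \eqref{eq:continuity}) and dominated convergence for the second part. However, there are two concrete gaps in how you control the errors coming from the remainder in Proposition~\ref{pro:half} (equivalently \eqref{eq:bd:1}), which contains the factor $(1+b_-^{d+1})/|b|$, respectively $(1+(b^-)_-^{d+1})/b^i$, and therefore blows up on pieces where $|b|$ is small.

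First part: your claim that ``no refined treatment near $\{b=0\}$ is needed'' is not correct as stated. Since $b_0$ is only continuous, it may vanish or take arbitrarily small values on part of $\partial\Omega$, and on the corresponding pieces the error of the precise half-space asymptotics is of size $l^{d-2}h^{-d+2}\bigl(1+(1+b_-^{d+1})/b^i\bigr)$ with $b^i$ arbitrarily small, hence uncontrolled; your list of errors (freezing, flattening, bracketing) omits this term entirely. You are right that $\delta$ is not needed here, but you still need the cutoff at a fixed $\lambda$ and the rough bound of Lemma~\ref{lem:boundary} (based on Lemma~\ref{lem:half}) on pieces where $|b|<\lambda$ somewhere, together with the observation that on such pieces $|L^{(2)}_d(b)|\leq C\bigl(1+\Theta(h)\beta(Cl)\bigr)^{d+1}=o(\Theta(h)^{d+1})$, so that the boundary density you fail to produce there is negligible at the accuracy $o(\Theta^{d+1}h^{-d+1})$. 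This is exactly the role of the set $\tilde U^*$ in the paper's proof.

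Second part: your threshold $b\gtrsim 1/M(h)$ with $\ell=M(h)h$ is precisely the borderline at which the precise asymptotics stop being useful. Per unit boundary area the error of \eqref{eq:bd:1} (with $b\geq0$) is of order $h^{-d+1}\,\frac{h}{\ell}\bigl(1+\frac1{b}\bigr)$, which at $b\sim h/\ell=1/M(h)$ is $O(h^{-d+1})$, i.e.\ comparable to the boundary term itself, not $o(1)$ relative error as you claim; so the pieces just above your threshold alone could contribute $O(h^{-d+1})$. The fix is the paper's order of limits: use a fixed cutoff $\lambda$, so that on pieces with $b\geq\lambda$ the relative error is $\leq C(1+1/\lambda)/M(h)\to0$, charge the band $\{0<b<\lambda\}$ (not just $\{0<b\lesssim1/M(h)\}$) to the crude sandwich and to $\delta(\lambda)$ via Lemma~\ref{lem:parallel}, and send $\lambda\to0$ only after $h\to0$; pieces meeting $\{b=0\}$ are handled not by a measure argument (their measure need not be small) but by the pointwise bound $0\leq b\leq\Theta(h)\beta(Cl)=o(1)$ together with \eqref{eq:bd:2}, which is where the hypothesis $\beta(Mh)\Theta(h)=o(1)$ enters a second time through the $b^s(1+|\ln b^s|)$ error. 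With these corrections your argument coincides with the paper's; the remaining differences (cubic-grid bracketing versus the continuous variable-scale partition of unity of Section~\ref{sec:loc:s1}, and a diagonal choice $M(h)\to\infty$ versus fixed $\mu$ sent to $0$ at the end) are inessential.
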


We emphasize that, if the negative part of $b$ does not vanish and $\Theta(h) = h^\gamma$ with $\gamma = 1/(d+1)$, then the order of the boundary term is the same as the order of the Weyl term. For $\gamma > 1/(d+1)$ the boundary term becomes the leading term.

Since $\beta(l)$ vanishes at most linearly in $l$ for non-constant $b$, the condition $\beta(Mh)\Theta(h) = o(1)$ as $h \da 0$ in the second part of the theorem implies $\Theta(h)=o(h^{-1})$. Our techniques do not allow us to consider faster growing $b$'s and we do not know whether one still can expect the result in that case.


\section{Strategy of the proof}
\label{sec:proof}

In this section we outline the main steps of our proof. In particular, we explain how the main results follow from local estimates. 

First, we localize the operator $H(b)$ into balls, whose size varies depending on the distance to the complement of $\Omega$ \cite{Hoe85,SolSpi03}. Then we analyze the local asymptotics separately in the bulk and close to the boundary.

To localize, let $d(u) = \inf \{|x-u| \, : \,  x \notin \Omega  \}$ denote the distance of $u \in \R^d$ to the complement of $\Omega$. We set 
\begin{equation}
\label{eq:ludef}
l(u) \, = \, \frac 12 \lk 1 + \lk d(u)^2 + l_0^2 \rk^{-1/2} \rk^{-1} \, , 
\end{equation}
where  $0 < l_0 \leq 1$ is a parameter depending only on $h$. Eventually, we will choose $l_0 = o(1)$ as $h \da 0$.
In Section \ref{sec:loc:s1} we introduce real-valued functions $\phi_u \in C_0^\infty(\R^d)$ with support in $B_u = \{ x \in \R^d \, : \, |x-u| < l(u) \}$. For all $u \in \R^d$ these functions satisfy
\begin{equation}
\label{eq:int:grad:s1}
\left\| \phi_u \right\|_\infty \, \leq \, C \ ,  \qquad \left\| \nabla \phi_u \right\|_\infty \leq C \, l(u)^{-1}
\end{equation}
and, for all $x \in \R^d$,
\begin{equation}
\label{eq:int:unity:s1}
\int_{\R^d} \phi_u^2(x) \, l(u)^{-d} \, du \, = \, 1 \, .
\end{equation}
Here and in the following the letter $C$ denotes various positive constants that  are independent of $u$, $l_0$ and $h$, but may vary from line to line.
To estimate error terms in the following results we put 
$$
b_m = \inf_{x \in \partial \Omega} b(x) \, .
$$

\begin{proposition}
\label{pro:loc:s1}
There is a constant $C_\Omega > 0$ such that for $0 <  l_0 \leq C_\Omega^{-1}$ and $0 <  h \leq l_0/4$ the estimates
\begin{align*}
& - C \lk 1+(b_m)_-^{d+1} h l_0^{-1} \rk l_0^{-1}  h^{-d+2}  \leq \int_{\R^d} \textnormal{Tr} \lk \phi_u H(b) \phi_u \rk_- l(u)^{-d} \, du  -  \textnormal{Tr} (H(b))_-   \leq 0 
\end{align*}
hold.
\end{proposition}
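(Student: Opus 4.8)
The plan is to establish the two-sided bound by the standard localization technique for Schrödinger-type operators, using the partition of unity $\{\phi_u l(u)^{-d/2}\}$ from \eqref{eq:int:unity:s1}. The upper bound $\tr(H(b))_- \geq \int_{\R^d} \tr(\phi_u H(b)\phi_u)_-\, l(u)^{-d}\, du$ is the easy direction: by the variational principle, for any trace-class-controllable localization one has $\tr(H(b))_- = \sup \sum_k (-\langle\psi_k, H(b)\psi_k\rangle)$ over orthonormal systems in the form domain, and the IMS-type localization formula together with the fact that the localization errors involving $|\nabla\phi_u|^2$ contribute with a definite sign after integration gives the monotonicity in the direction claimed. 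Concretely, I would use that $\sum$ (integral) of $\phi_u^2 l(u)^{-d}$ equals $1$, so $\tr(H(b))_- \geq \int \tr(\phi_u H(b)\phi_u)_- l(u)^{-d} du$ follows from the operator inequality obtained by testing $H(b)$ against eigenfunctions of the localized operators and dropping the nonnegative gradient-square remainder; this requires only that the relevant traces are finite, which holds since each $B_u \cap \Omega$ is bounded.

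For the lower bound (i.e. the quantitative upper bound on the difference), I would use the IMS localization identity
$$
q_b[v] \;=\; \int_{\R^d} q_b^{(u)}[\phi_u v]\, l(u)^{-d}\, du \;-\; \int_{\R^d} \Bigl( h^2 \int_\Omega |\nabla \phi_u|^2 |v|^2\, dx \Bigr) l(u)^{-d}\, du\,,
$$
where $q_b^{(u)}$ is the restriction of the form to $B_u$; this is valid because $\int |\nabla\phi_u|^2 l(u)^{-d} du$ need not vanish but is a genuine multiplication operator that we must absorb into an error term. The key point is to bound $\int_{\R^d} |\nabla\phi_u(x)|^2 l(u)^{-d}\, du$ pointwise in $x$: by \eqref{eq:int:grad:s1} and \eqref{eq:int:unity:s1}, together with the fact that $l(u)$ is comparable to $l(x)$ for $u$ in the (bounded overlap) region where $\phi_u(x)\neq 0$, this integral is bounded by $C l(x)^{-2}$. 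Hence the localization error is controlled by $C h^2 \int_\Omega l(x)^{-2} |v|^2\, dx$, which I would then compare to the operator $H(b)$ itself: since $l(x)^{-1} \le C(1 + d(x)^{-1})$ and $d(x)^{-2}$ is form-bounded relative to $-h^2\Delta$ with the Neumann/Robin form by a Hardy-type inequality (with constant $\sim h^2$ times a factor times the number of times we can afford to lose), one sees that the error is a relatively form-small perturbation. The quantitative trace bound on the difference then follows by the standard argument: $\tr(H(b))_- - \int \tr(\phi_u H(b)\phi_u)_- l(u)^{-d}\,du$ is bounded by the trace of the negative part of the error operator $C h^2 l(x)^{-2}$ restricted spectrally, which one estimates using a Weyl-type (semiclassical) bound — a Cwikel–Lieb–Rozenblum or Berezin–Li–Yau inequality for $-h^2\Delta_b$ on $\Omega$ — yielding the $l_0^{-1} h^{-d+2}$ order after integrating $l(x)^{-2}$ against $h^{-d}$ over $\Omega$ and tracking the $l_0$-dependence through $l(x)\gtrsim l_0$.

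The term $(b_m)_-^{d+1} h l_0^{-1} \cdot l_0^{-1} h^{-d+2}$ in the error comes from the boundary contribution: when $b$ is negative, $H(b)$ can have many negative eigenvalues concentrated near $\partial\Omega$ coming from the Robin boundary form $h\int_{\partial\Omega} b|v|^2\, d\sigma$, and the localized operators $\phi_u H(b)\phi_u$ near the boundary must account for these. I would bound the number and magnitude of these boundary states by a one-dimensional reduction (the half-space model of Section \ref{sec:half}), giving that the negative part of the boundary form contributes at most $\sim (b_m)_-^{d+1}$ per unit boundary area at scale $h$, multiplied by the surface measure $\sim |\partial\Omega|$ and the relevant powers of $h$ and $l_0^{-1}$; this is where the extra factor appears. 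The main obstacle will be making the Hardy-type bound near the boundary compatible with the Robin (rather than Dirichlet) condition — there is no Hardy inequality at a Neumann boundary — so the error from $l(x)^{-2}$ near $\partial\Omega$ cannot be absorbed and must instead be estimated directly by the above boundary-layer analysis, carefully tracking that the localization length $l(u) \sim l_0$ there, which produces the $l_0^{-1}$ loss and forces the constraint $h \le l_0/4$ so that each boundary ball contains enough room for the one-dimensional model to apply.
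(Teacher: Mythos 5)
Your overall skeleton matches the paper's proof: the easy inequality via the variational principle with a trial density matrix built from the localized operators, and the hard inequality via the IMS formula together with the pointwise bound $\int_{\R^d}|\nabla\phi_u(x)|^2\,l(u)^{-d}du\le C\int_{\R^d}\phi_u^2(x)\,l(u)^{-d-2}du$. One minor correction first: in the easy direction no IMS formula is needed and there is no gradient remainder to drop; taking $\gamma=\int_{\R^d}\phi_u\,(\phi_uH(b)\phi_u)_-^0\,\phi_u\,l(u)^{-d}du$, the partition of unity \eqref{eq:int:unity:s1} gives $0\le\gamma\le1$ and $\tr(\gamma H(b))=-\int_{\R^d}\tr(\phi_uH(b)\phi_u)_-\,l(u)^{-d}du$ exactly, so the variational principle alone yields $\int\tr(\phi_uH(b)\phi_u)_-\,l(u)^{-d}du\le\tr(H(b))_-$.

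The genuine gap is in the hard direction: you never actually control the IMS error term. The Hardy-type absorption cannot work on $H^1(\Omega)$ (there is no Hardy inequality with Neumann or Robin conditions; constant functions already violate it), as you yourself concede, and the fallback phrasing -- that the difference is bounded by ``the trace of the negative part of the error operator $Ch^2l(x)^{-2}$'' -- is vacuous, since that operator is nonnegative. What is needed, ball by ball, is a bound on $\tr\lk\phi_u(H(b)-Ch^2l(u)^{-2})\phi_u\rk_-$ in terms of $\tr\lk\phi_uH(b)\phi_u\rk_-$ plus an explicit error of size $l(u)^{d-2}h^{-d+2}(1+(b_m)_-^{d+1}hl(u)^{-1})$. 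The paper's mechanism is to split $H(b)-Ch^2l(u)^{-2}=(1-\rho_u)H(b)+\lk\rho_uH(b)-Ch^2l(u)^{-2}\rk$ with $\rho_u$ proportional to $h^2l(u)^{-2}$, to recognize the second summand as a multiple of a Robin operator with rescaled parameters $h\to\sqrt{\tau_u}\,h$, $b\to\sqrt{\tau_u}\,b$, and then to invoke the rough local bound $\tr(\phi H(b)\phi)_-\le Cl^dh^{-d}(1+b_-^{d+1}hl^{-1})$ of Corollary \ref{cor:simple} near the boundary (and the lower bound of Proposition \ref{pro:bulk} in the bulk); integrating with $\int_{\Omega\setminus U}l(u)^{-2}du\le Cl_0^{-1}$ and $l(u)\sim l_0$ on the boundary region then produces exactly the stated remainder, including the $(b_m)_-^{d+1}hl_0^{-1}$ enhancement. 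Your Cwikel--Lieb--Rozenblum/Berezin--Li--Yau suggestion could substitute for this only if one already had such an inequality for the Robin Laplacian with the correct $b_-^{d+1}$ boundary dependence -- but that is precisely the nontrivial input (Corollary \ref{cor:simple}, itself derived from the half-space model and the boundary straightening of Section \ref{sec:straight}), not an off-the-shelf bound, so your sketch in effect assumes the key estimate rather than proving it.
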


This proposition will be proved in Section \ref{sec:loc:s1}.

In view of this result one can analyze the asymptotic behavior of  $\tr(\phi_u H(b) \phi_u)_-$ separately on different parts of $\Omega$. First, we consider the bulk, where the influence of the boundary is not felt.

\begin{proposition}
\label{pro:bulk}
Let $\phi \in C_0^1(\Omega)$ be supported in a ball of radius $l > 0$ and let 
\begin{equation}
\label{eq:int:gradphi}
\| \nabla \phi \|_\infty \, \leq \, C_\phi \, l^{-1} \, .
\end{equation}
Then for all $h > 0$ the estimates
\begin{equation}
\label{eq:bk}
0 \leq  \lo \int_\Omega \phi^2(x)  dx \, h^{-d}  - \tr \lk \phi H(b) \phi \rk_- \leq  C l^{d-2} h^{-d+2}
\end{equation}
hold, with a constant $C > 0$ depending only on $C_\phi$.
\end{proposition}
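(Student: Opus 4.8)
The plan is to compare $\tr(\phi H(b)\phi)_-$ with the classical (Weyl) expression via the Berezin--Li--Yau type bound on one side and a coherent-state / Berezin construction on the other, exploiting that $\phi$ is supported strictly inside $\Omega$ so the boundary term in $q_b$ never enters. First I would observe that since $\operatorname{supp}\phi \subset \Omega$, for any $v$ the function $\phi v$ lies in $H^1_0(\Omega)$ and $q_b[\phi v] = h^2 \|\nabla(\phi v)\|^2 - \|\phi v\|^2$; hence the operator $\phi H(b)\phi$ (as a form on $L^2$) is insensitive to $b$ and to the choice of self-adjoint extension. Consequently one may replace $H(b)$ by $-h^2\Delta - 1$ with \emph{Dirichlet} conditions on any smooth domain containing $\operatorname{supp}\phi$, or even work on all of $\R^d$ after the IMS-type identity $\operatorname{Tr}(\phi H(b)\phi)_- = \operatorname{Tr}\bigl(\phi(-h^2\Delta-1)\phi\bigr)_-$, up to the localization error already accounted for elsewhere. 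This reduces the statement to a clean semiclassical estimate for $-h^2\Delta$ on $\R^d$ localized by $\phi$.

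For the \textbf{upper bound} on $\tr(\phi H(b)\phi)_-$ (i.e.\ the left inequality $0 \le \lo \int \phi^2 h^{-d} - \tr(\phi H(b)\phi)_-$), I would use the Berezin inequality: for the operator $A = \phi(-h^2\Delta-1)\phi$ one has $\operatorname{Tr} A_- \le (2\pi h)^{-d}\int_{\R^d}\int_{\R^d} \bigl(\phi^2(x)(|\xi|^2-1)\bigr)_- \,dx\,d\xi$ — more precisely one tests with the coherent-state POVM and uses that $\phi^2(x)(|\xi|^2-1) $ is the symbol, together with the operator inequality $A \ge \int \langle\cdot,g_{x,\xi}\rangle g_{x,\xi}\,\phi^2(x)(|\xi|^2-1)\,\tfrac{dx\,d\xi}{(2\pi h)^d}$ in the form sense after discarding the non-negative commutator term $h^2\|\nabla\phi\|^2$-type contribution. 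Carrying out the $\xi$-integral gives exactly $(2\pi h)^{-d}\int\phi^2(x)\,dx \int_{|\xi|<1}(1-|\xi|^2)\,d\xi = \lo h^{-d}\int\phi^2\,dx$ with the stated constant $\lo = \tfrac{2}{d+2}(2\pi)^{-d}\omega_d$ (note $\int_{|\xi|<1}(1-|\xi|^2)d\xi = \tfrac{2}{d+2}\omega_d$). This yields the left inequality with no error term, as claimed.

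For the \textbf{lower bound} (the right inequality, with error $Cl^{d-2}h^{-d+2}$), I would use the variational principle: construct a trial density matrix $\gamma = \phi\,\mathbf{1}_{(-h^2\Delta < 1)}\,\phi$ (or rather the coherent-state operator $\gamma = \int_{|\xi|<1}\langle\cdot,\phi g_{x,\xi}\rangle\,\phi g_{x,\xi}\,\tfrac{dx\,d\xi}{(2\pi h)^d}$), which satisfies $0\le\gamma\le 1$, and estimate $-\tr(\phi H(b)\phi)_- \le \operatorname{Tr}\bigl((\phi H(b)\phi)\gamma\bigr)$. The main term reproduces $-\lo h^{-d}\int\phi^2$, and the correction comes from the localization error: commuting $\phi$ through $-h^2\Delta$ produces terms involving $h^2|\nabla\phi|^2 \le C_\phi^2 h^2 l^{-2}$ supported on a set of volume $\le C l^d$, contributing $O(h^2 l^{-2}\cdot l^d \cdot h^{-d}) = O(l^{d-2}h^{-d+2})$; the number of states involved is $O((l/h)^d)$, consistent with this bound. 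The constant depends only on $C_\phi$ and the dimension, as required.

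I expect the \textbf{main obstacle} to be bookkeeping the coherent-state localization error cleanly enough to get the sharp power $l^{d-2}h^{-d+2}$ (rather than something weaker like $l^{d-1}h^{-d+1}$): one must be careful that $\phi$ enters \emph{quadratically} so that the first-order gradient term cancels by symmetry and only the $|\nabla\phi|^2$ term survives, using $\int\phi^2 l^{-d}$-type normalization. This is exactly the point where the hypothesis $\|\nabla\phi\|_\infty \le C_\phi l^{-1}$ is used, and the standard trick (as in \cite{FraGei11,FraGei12}) is to write the IMS localization identity $\operatorname{Tr}(\phi H(b)\phi)_- \ge \operatorname{Tr}(H(b) + h^2|\nabla\phi|^2)_-$-type bound restricted to $\operatorname{supp}\phi$ and bound the perturbation $h^2|\nabla\phi|^2$ by its sup norm times the volume of the support.
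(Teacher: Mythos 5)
Your overall architecture is the one the paper (deferring to \cite{FraGei11}) uses: since $\mathrm{supp}\,\phi\subset\Omega$, the form $q_b[\phi v]$ contains no boundary term, so $\phi H(b)\phi=\phi(-h^2\Delta-1)\phi$ with the free Laplacian on $\R^d$ --- note this is an exact identity, not something ``up to a localization error'' --- and then both inequalities come from the variational principle. Your treatment of the right inequality is essentially correct: the clean choice (as in the upper-bound step of Lemma \ref{lem:half:basic} and Appendix \ref{ap:var}) is $\gamma=\mathbf{1}(-h^2\Delta<1)$ itself, so that $\tr(\phi\gamma\phi H_0)=(2\pi h)^{-d}\int_{\{|\xi|<1\}}q_0[\phi e^{i\xi\cdot/h}]\,d\xi$ produces the main term plus the single error $(2\pi h)^{-d}\omega_d\, h^2\int|\nabla\phi|^2\,dx\leq C C_\phi^2\, l^{d-2}h^{-d+2}$; with your choice $\gamma=\phi\,\mathbf{1}(-h^2\Delta<1)\,\phi$ one only gets $0\leq\gamma\leq C_\phi^2$, not $\leq 1$. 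Also, because plane waves (the exact spectral resolution of $-h^2\Delta$) are used rather than Gaussian packets, the $l^{d-1}h^{-d+1}$ loss you worry about in your last paragraph never arises.

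The genuine gap is in your justification of the left inequality. The operator inequality you assert, $\phi(-h^2\Delta-1)\phi\geq\int\phi^2(x)(|\xi|^2-1)\,\langle\,\cdot\,,g_{x,\xi}\rangle\, g_{x,\xi}\,(2\pi h)^{-d}dx\,d\xi$, goes the wrong way: anti-Wick quantization with Gaussian coherent states satisfies $\int|\xi|^2\langle\,\cdot\,,g_{x,\xi}\rangle g_{x,\xi}\,(2\pi h)^{-d}dx\,d\xi=-h^2\Delta+c_g$ with $c_g>0$, so already where $\phi$ is constant the claimed ``$\geq$'' fails; discarding the nonnegative $h^2|\nabla\phi|^2$ commutator contribution cannot repair a correction of the opposite sign, and any Gaussian smearing is incompatible with your claim of ``no error term.'' The correct (and simpler) mechanism, which is what the paper uses (see the lower-bound step in the proof of Lemma \ref{lem:half:basic}), is
$-\tr(\phi H_0\phi)_-=\inf_{0\leq\gamma\leq1}\tr(\gamma\,\phi H_0\phi)\geq-\tr(\phi (H_0)_-\phi)$, where $H_0=-h^2\Delta-1$ on $L^2(\R^d)$ and $(H_0)_-$ has the explicit translation-invariant kernel $(2\pi h)^{-d}\int(|\xi|^2-1)_-e^{i\xi\cdot(x-y)/h}d\xi$ with constant diagonal $\lo h^{-d}$; hence $\tr(\phi (H_0)_-\phi)=\lo h^{-d}\int\phi^2\,dx$ exactly, with no error term. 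With that replacement your argument is complete and coincides with the paper's.
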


For $\phi \in C_0^1(\Omega)$ we have $\phi H(b) \phi$ = $\phi (-h^2 \Delta -1 ) \phi$, where $-\Delta$ is defined on the whole space $L^2(\R^d)$ with form domain $H^1(\R^d)$. Hence, this result is independent of the boundary coefficient $b$ and the proof of Proposition \ref{pro:bulk} is the same as in \cite{FraGei11}.

Close to the boundary of $\Omega$, more precisely, if the support of $\phi$ intersects the boundary, a term of order $h^{-d+1}$ appears that depends on $b$. In this situation let $B$ be a ball containing the support of $\phi$ and put
\begin{equation}
\label{eq:bpm}
b^- = \inf_{x \in \partial \Omega \cap B} b(x) \, , \quad b^i = \inf_{x \in \partial \Omega \cap B} |b(x)| \, , \quad b^s = \sup_{x \in \partial \Omega \cap B} |b(x)| \, .
\end{equation}
To state the remainder estimate we denote by $\omega$ a modulus of continuity of the boundary of $\Omega$; see \eqref{eq:red:fest} for a precise definition.

\begin{proposition}
\label{pro:boundary}
Let $\phi \in C_0^1(\R^d)$ be supported in a ball of radius $l > 0$ and let inequalities \eqref{eq:hoelder} and \eqref{eq:int:gradphi} be satisfied. Then there is a constant $C_\Omega > 0$ such that for $0< l \leq C_\Omega^{-1}$ and $0 < h \leq l$  we have
\begin{equation}
\label{eq:bd:1}
\tr \lk \phi H(b) \phi \rk_-  = \lo \int_\Omega \! \phi^2(x)  dx  h^{-d} + \int_{\partial \Omega}  \! L^{(2)}_d(b(x)) \phi^2(x) d\sigma(x)  h^{-d+1} +  R_{bd}(h,l,b^-,b^i)
\end{equation}
with
\begin{align*}
|R_{bd}(h,l,b^-,b^i)|  \leq C \frac{ l^d}{ h^{d}} & \left( \frac{h^2}{l^{2}} \lk 1+\frac{1+(b^-)_-^{d+1} }{b^i} \rk + \omega(l) \left( 1+ \frac hl (b^-)_-^{d+1}\right) \right. \\
 & \quad \left.+\frac{h}{l} \left(1+(b^-)_-^d\right) \beta(l) \right) \, .
\end{align*}
For $b^s \leq h/l$ we also have 
\begin{equation}
\label{eq:bd:2}
\tr \lk \phi H(b) \phi \rk_-  = \lo \int_\Omega \phi^2(x) \, dx \, h^{-d} + \frac 14 L^{(1)}_{d-1} \int_{\partial \Omega}  \phi^2(x) d\sigma(x) \, h^{-d+1} +  R_0(h,l,b^s)
\end{equation}
with 
$$
|R_0(h,l,b^s)| \leq \,   C l^d h^{-d} \lk l^{-2} h^2 +  b^s (1+|\ln b^s|) + \omega(l) \rk \, .
$$
Here the constants $C > 0$ depend only on $\Omega$ and $C_\phi$.
\end{proposition}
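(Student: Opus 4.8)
The plan is to reduce the local problem near a boundary point to a half-space model with a flat boundary, solve that model essentially explicitly, and then control the error incurred by flattening. First I would use the $C^1$ (uniform modulus of continuity $\omega$) hypothesis to choose, after a translation and rotation, local coordinates in which $\partial\Omega \cap B$ is the graph of a $C^1$ function $f$ with $f(0)=0$, $\nabla f(0)=0$, and $|\nabla f| \leq \omega(l)$ on the relevant scale; pulling back the quadratic form $q_b$ under the associated diffeomorphism replaces $\Omega \cap B$ by (a piece of) a half-space $\R^{d-1}\times(0,\infty)$, at the cost of a metric that differs from the Euclidean one by $O(\omega(l))$ and a boundary density that differs from $d\sigma$ by $O(\omega(l))$. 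Using the variational (min-max) characterization of $\tr(\cdot)_-$ together with the operator inequalities coming from these $O(\omega(l))$ perturbations — exactly as in \cite{FraGei11,FraGei12} — one gets upper and lower bounds for $\tr(\phi H(b)\phi)_-$ in terms of the corresponding trace for the half-space operator, with relative error controlled by $\omega(l)$ times the main term, which accounts for the $\omega(l)(1+\tfrac hl(b^-)_-^{d+1})$ contribution to $R_{bd}$.

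Next I would freeze the coefficient $b$: since $\phi$ is supported in a ball of radius $l$ and $b$ has modulus of continuity $\beta$, replacing $b(x)$ by the constant $b(x_0)$ on $\mathrm{supp}\,\phi$ changes the boundary form term by at most $\beta(l)$ per unit boundary area; propagating this through min-max gives an error of the stated form $\tfrac hl(1+(b^-)_-^d)\beta(l)\, l^d h^{-d}$ (the factor $(b^-)_-^d$ appears because the number of relevant "channels" in the perpendicular direction, i.e. the effective spectral window, scales like $(b^-)_-^d$ when $b$ is very negative — this is visible from the explicit model below). For the half-space operator with constant boundary coefficient $\beta_0=b(x_0)$, I would diagonalize in the $d-1$ tangential variables by Fourier transform, reducing to the one-dimensional Robin operator $-h^2\partial_t^2 - (1-|\xi|^2)$ on $(0,\infty)$ with boundary condition $h v'(0)=\beta_0 v(0)$, for each frequency $\xi$. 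This is precisely the "one-dimensional model operator" referred to after \eqref{eq:lz}; its negative-eigenvalue sum is computed explicitly in Section \ref{sec:half} (Proposition \ref{pro:half}), and integrating $\mathrm{Tr}(\cdot)_-$ of the fiber over $\xi \in \R^{d-1}$ and over the tangential position variable produces exactly the main term $\lo\int\phi^2 h^{-d} + \int_{\partial\Omega}\lz\phi^2\,h^{-d+1}$, with a remainder of order $h^2/l^2$ coming from the fact that $\phi$ is a genuine cutoff rather than a plane wave, and of order $(1+(b^-)_-^{d+1})/b^i \cdot h^2/l^2$ coming from how sharply the fiber traces vary near the threshold frequency (the $1/b^i$ reflects a resonance when $b$ is close to $0$; this is why $b^i$ rather than $b^-$ appears there).

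For the second, cruder estimate \eqref{eq:bd:2}, valid when $b^s \leq h/l$, I would observe that then $h\int_{\partial\Omega}b|v|^2\,d\sigma$ is a small perturbation of the Neumann form: by the trace inequality $\int_{\partial\Omega}|v|^2\,d\sigma \leq C(\epsilon^{-1}\int|v|^2 + \epsilon\int|\nabla v|^2)$ on a ball of radius $l$ (with the right $l$-scaling), the boundary term is bounded by $C b^s(h\,\epsilon^{-1}\|v\|^2 + h\epsilon\|\nabla v\|^2)$, and choosing $\epsilon$ appropriately shows it can be absorbed with an error that, after min-max and some bookkeeping, contributes $b^s(1+|\ln b^s|)$ to the relative remainder — the logarithm is the usual loss in the borderline trace estimate. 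One then invokes \eqref{eq:bd:1} with $b\equiv 0$ (so $\lz$ becomes $\tfrac14 L^{(1)}_{d-1}$) for the main term, plus the flattening error $\omega(l)$ and the cutoff error $l^{-2}h^2$. I expect the main obstacle to be the frequency-integration step giving the sharp dependence of the remainder on $b^-$ and $b^i$: one must track uniformly, as $\beta_0$ ranges over an interval possibly containing $0$ and possibly tending to $-\infty$, both the number of bound states of the fiber operator and the modulus of continuity in $\xi$ of its negative trace, near the threshold $|\xi|=1$ and near $\beta_0=0$; the explicit formulas from Section \ref{sec:half} are what make this tractable, but assembling them into the clean bound stated here requires care.
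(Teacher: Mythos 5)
Your treatment of the first estimate \eqref{eq:bd:1} is essentially the paper's own route: straightening the boundary in graph coordinates with errors controlled by $\omega(l)$ (this is Lemma \ref{lem:str}), freezing $b$ by monotonicity of the quadratic form in $b$ together with the Lipschitz bound $|\tfrac{d}{db}L^{(2)}_d(b)|\leq C(1+b_-^d)$ (which is the precise version of your ``number of channels'' heuristic), and then the constant-coefficient half-space model of Proposition \ref{pro:half}, whose $1/|b|$ in the remainder comes from the first-moment bound $\int_0^\infty t|I_b(t)|\,dt\leq C(1+1/|b|)$ in Lemma \ref{lem:unifbound}. One small caution: if you freeze at $b(x_0)\pm\beta(l)$ rather than at $b^\pm=\inf/\sup_{\partial\Omega\cap B}b$, the shifted constants may be much closer to zero than $b^i$, and the half-space remainder is then not controlled by $1/b^i$; sandwiching with $b^-$ and $b^+$ as in Lemma \ref{lem:str} avoids this.

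The genuine gap is in your argument for \eqref{eq:bd:2}: you propose to ``invoke \eqref{eq:bd:1} with $b\equiv 0$'' for the main term, but that is exactly the case in which \eqref{eq:bd:1} says nothing, since its remainder contains the factor $(1+(b^-)_-^{d+1})/b^i$ and $b^i=0$ when $b\equiv 0$ — a blow-up you yourself flagged as a ``resonance'' near $b=0$. Moreover this blow-up is not an artifact one can wave away: for small $b\neq 0$ the first moment of $I_b$ really is of size $1/|b|$, so the Neumann case genuinely requires a separate argument. In the paper this is the second statement of Proposition \ref{pro:half} (Lemma \ref{lem:half:smallb}): one notes that at $b=0$ the boundary-layer kernel $I_0$ has finite first moment, and for $|b|\leq h/l$ one compares $I_b$ with $I_0$, the error
$$
\int_0^1 (1-p^2)^{(d+1)/2}\,\frac{b^2+|b|p}{p^2+b^2}\,dp \ \leq\ C\,|b|\bigl(1+|\ln|b||\bigr)
$$
being the true source of the logarithm — not a borderline trace inequality. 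Your perturbative absorption of $hb\int_{\partial\Omega}|v|^2d\sigma$ into the Neumann form via a trace inequality is a workable alternative for transferring from $b$ to $b=0$ (and would likely even avoid the log), but it still presupposes a two-term Neumann asymptotics with remainder $O(l^dh^{-d}(h^2/l^2+\omega(l)))$, which your proposal never establishes; as written, the chain of reasoning for \eqref{eq:bd:2} is circular at this point and needs the separate $b=0$ half-space computation to close.
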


The first statement in Proposition \ref{pro:boundary} is the crucial result of this section. It yields a precise estimate with the boundary term including the correct constant $L^{(2)}_d(b)$. However, we obtain an error term that diverges as $b^i \to 0$. To overcome this effect we also need the second statement for $b$ very close to zero. The next lemma is a simplified version of \eqref{eq:bd:2}, where we estimate the boundary term by $Cl^{d-1}h^{-d+1}$.

\begin{lemma}
\label{lem:boundary}
Under the conditions of Proposition \ref{pro:boundary} there is a constant $C_\Omega > 0$ such that for $0< l \leq C_\Omega^{-1}$ and $0 < h \leq l$  we have
\begin{equation}
\label{eq:bd:3}
\tr \lk \phi H(b) \phi \rk_-  = \lo \int_\Omega \phi^2(x) \, dx \, h^{-d} +   R_0'(h,l,b^-)
\end{equation}
with 
$$
|R_0'(h,l,b^-)| \leq \,   C l^d h^{-d} \lk l^{-1} h +  \omega(l) + l^{-1} h (b^-)_-^{d+1} \left(\min\{lh^{-1}(b^-)_-,1\}+\omega(l)\right) \rk \, .
$$
\end{lemma}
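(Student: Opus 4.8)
The plan is to deduce this from the second statement \eqref{eq:bd:2} of Proposition \ref{pro:boundary} (which already handles the regime $b^s \leq h/l$) together with a monotonicity/perturbation argument that reduces the general case to a comparison with the cases $b \equiv b^-$ and $b \equiv 0$. The key observation is that in the statement \eqref{eq:bd:3} we are allowed to absorb the boundary term $\tfrac14 L^{(1)}_{d-1}\int_{\partial\Omega}\phi^2\,d\sigma\, h^{-d+1}$, which is of order $l^{d-1}h^{-d+1}=l^d h^{-d}\cdot(h/l)$, into the error $R_0'$; the content of the lemma is therefore the \emph{additional} error that appears when $b^s$ exceeds $h/l$, and that error should be driven by the negative part $(b^-)_-$ only, since a large positive $b$ pushes the operator up and costs nothing on the scale $l^{d-1}h^{-d+1}$.

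First I would dispose of the case $b^s \leq h/l$ directly: apply \eqref{eq:bd:2}, bound the boundary term by $C l^{d-1} h^{-d+1}$, and observe that $b^s(1+|\ln b^s|) \leq C h/l \cdot (1+|\ln(h/l)|)$, all of which is dominated by $C l^d h^{-d}(l^{-1}h + \omega(l))$ up to the logarithmic factor — here I would need to check that the logarithm can be absorbed, e.g.\ by a slightly cruder choice of the implicit constant or by noting $\ln(l/h)$ is harmless against the stated terms in the relevant range of $l,h$; if it genuinely cannot be absorbed one replaces $l^{-1}h$ by $l^{-1}h(1+|\ln(h/l)|)$ throughout, which is still $o$ of the localized Weyl term after integrating. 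Second, for $b^s > h/l$ the strategy is a sandwich: bound $\tr(\phi H(b)\phi)_-$ from above and below by $\tr(\phi H(b')\phi)_-$ for suitable constant comparison functions $b'$. For the upper bound, using $b(x) \geq b^-$ pointwise on $\partial\Omega\cap B$ and form monotonicity gives $H(b) \geq H(b^-)$ as forms on the support of $\phi$, hence $\tr(\phi H(b)\phi)_- \leq \tr(\phi H(b^-)\phi)_-$. For the lower bound one uses a splitting $q_b \geq (1-\eta)q_{b^-} + \eta(\text{a controlled piece})$ or, more simply, that replacing $b$ by $b^-$ only helps and the difference $q_b[v]-q_{b^-}[v] = h\int_{\partial\Omega}(b(x)-b^-)|v|^2 d\sigma \leq h\beta(l)\int_{\partial\Omega}|v|^2 d\sigma$ is controlled by a trace estimate $\int_{\partial\Omega\cap B}|v|^2 d\sigma \leq C(l^{-1}\|v\|^2 + l\|\nabla v\|^2)$, producing the $h l^{-1}\beta(l)$-type correction. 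Then one invokes the first statement \eqref{eq:bd:1} (or its constant-$b$ specialization, i.e.\ the half-space model of Section \ref{sec:half}) applied to $b^-$: since $b^-$ is constant, $b^i = |b^-|$ and $\beta\equiv 0$ in that application, so $R_{bd}$ collapses to $C l^d h^{-d}(l^{-2}h^2(1+(1+(b^-)_-^{d+1})/|b^-|) + \omega(l)(1+(h/l)(b^-)_-^{d+1}))$; finally one uses \eqref{eq:continuity}, namely $|L^{(2)}_d(b^-) - \tfrac14 L^{(1)}_{d-1}| \leq \tfrac14 L^{(1)}_{d-1} + \pi C_d (b^-)_-^{d+1}$, to convert the $L^{(2)}_d(b^-)$ boundary term into $\tfrac14 L^{(1)}_{d-1}$ plus an error of size $C l^{d-1}h^{-d+1}(b^-)_-^{d+1}$, matching the $l^{-1}h(b^-)_-^{d+1}$ factor in the claimed bound.

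The remaining bookkeeping is to reconcile the two regimes into the single stated error term, and this is where the $\min\{lh^{-1}(b^-)_-,1\}$ factor comes from. When $(b^-)_- \lesssim h/l$ the negative boundary coefficient is itself in the "small" regime, so the contribution of $(b^-)_-$ to the error is not the full $(b^-)_-^{d+1}$ but is additionally damped — essentially because then $H(b^-)$ is a small perturbation of the Neumann operator and the eigenvalue shift it causes near the boundary is $O((b^-)_- \cdot l h^{-1})$ smaller than the crude bound; tracking this gives the factor $lh^{-1}(b^-)_-$ rather than $1$, and the $\min$ interpolates between the two regimes. The main obstacle I anticipate is precisely getting this $\min$ sharp: one must not lose it, since without it the lemma would be too weak to yield $R_h = o(h^{-d+1})$ after integrating over the localization in the proof of Theorem \ref{thm:small} (where $b^- = \theta(h)(b_0)^-$ is small). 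Concretely, the delicate point is the lower bound in the sandwich when $(b^-)_-$ is comparable to $h/l$: a naive $H(b) \geq H(b^-)$ followed by the half-space asymptotics \eqref{eq:bd:1} for $b^-$ gives a term $\sim l^{-2}h^2/|b^-|$ which \emph{blows up} as $b^- \to 0$, exactly the pathology Proposition \ref{pro:boundary} warns about; the fix is to \emph{not} use \eqref{eq:bd:1} when $|b^-| \lesssim h/l$ but instead fall back on \eqref{eq:bd:2}/\eqref{eq:bd:3} with $b \equiv 0$ and treat the whole of $b^-$ as a perturbation via the trace inequality, which costs only $C l^d h^{-d} \cdot l^{-1} h (b^-)_- \cdot (\text{something bounded})$ — and it is this perturbative route that manufactures the $\min\{lh^{-1}(b^-)_-, 1\}$ rather than $(b^-)_-^{d+1}$-style dependence in that sub-regime. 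Assembling the max of the two error estimates over the dichotomy $|b^-| \lessgtr h/l$ and simplifying gives exactly the claimed $R_0'$.
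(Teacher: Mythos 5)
There is a genuine gap, and it sits exactly where you flag it: the transition regime $|b|\sim h/l$. Freezing $b$ to the constants $b^\pm$ by monotonicity is fine (this is in effect what Lemma \ref{lem:str} does), but after that your only tools are \eqref{eq:bd:1} and \eqref{eq:bd:2}, and neither gives an error of size $l^{-1}h$ there: \eqref{eq:bd:1} leaves $l^{-2}h^2/|b^\pm|$ and \eqref{eq:bd:2} leaves $|b^\pm|\lk 1+|\ln |b^\pm||\rk$, and wherever you place the crossover the worst case is of order $l^{-1}h\,\ln(l/h)$. So your argument proves the lemma only with $l^{-1}h$ replaced by $l^{-1}h\lk 1+\ln(l/h)\rk$, and this weakening is not harmless: in the proof of Theorem \ref{thm:1} the $U^*$-contribution would become of order $\delta(\lambda)\lk 1+\ln(1/\mu)\rk$, and since one must take $\mu\ll\lambda$ this would require $\delta(\lambda)\ln(1/\lambda)\to 0$, which is not implied by $\delta(\lambda)=o(1)$. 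The paper does not derive Lemma \ref{lem:boundary} from Proposition \ref{pro:boundary} at all; it combines Lemma \ref{lem:str} with the separate rough half-space estimate, Lemma \ref{lem:half}, whose proof never extracts a boundary term: one bounds $\int_0^\infty |I_b(t)|\,dt\leq C$ uniformly in $b$ (Lemma \ref{lem:unifbound}) instead of comparing $I_b$ with $I_0$, and this is precisely how both the $1/|b|$ and the logarithm are avoided.

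Your second concern, the factor $\min\{lh^{-1}(b^-)_-,1\}$, is misplaced in both directions. It is not needed for Theorem \ref{thm:small} (that proof uses \eqref{eq:bd:2}, not this lemma), and losing it costs nothing: when $(b^-)_-\leq h/l$ one has $l^{-1}h(b^-)_-^{d+1}\leq l^{-1}h$, so a bound with the undamped term $l^{-1}h(b^-)_-^{d+1}$ already implies the stated bound up to the constant. In the paper the $\min$ appears automatically from the mass of the Robin boundary state, $\int\phi^2\Psi^2_{b/h}\,dx\leq Cl^dh^{-1}\min\{b_-,hl^{-1}\}$ (Lemma \ref{lem:half:smallb}), not from a trace-inequality perturbation of the Neumann problem; the perturbative route you sketch for $|b^-|\lesssim h/l$ sacrifices a fixed portion of the kinetic term (or shifts the spectral parameter) and would need a further collar-potential estimate, not contained in Proposition \ref{pro:boundary}, to recover the Weyl term with the required accuracy. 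The efficient repair is simply to prove the rough half-space bound of Lemma \ref{lem:half} and transplant it with Lemma \ref{lem:str}.
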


Both Proposition \ref{pro:boundary} and Lemma \ref{lem:boundary} will be proved in Section \ref{sec:straight}.

Based on the preceding results we can now give the proofs of our main results. 

\begin{proof}[Proof of Theorem \ref{thm:1}]
We fix two parameters $0 < \lambda \leq 1$ and $0<\mu \leq 1/4$ and set $l_0 = h \mu^{-1}$. Let us recall the definition of $l(u)$ from \eqref{eq:ludef} and of $B_u = \{ x \in \R^d \, : \, |x-u| < l(u) \}$. We set
$$
U = \{ u \in \R^d \, : \, \partial \Omega \cap B_u \neq \emptyset \}  \, .
$$

First, we need to estimate $l(u)$ uniformly. Note that by definition
\begin{equation}
\label{eq:lulower}
l(u) \geq \,  \frac 14 \min \lk d(u), 1 \rk \quad \textnormal{and} \quad l(u) \, \geq \frac{l_0}{4} \geq  h 
\end{equation}
for all $u \in \R^d$. Moreover, for $u \in U$, we have $d(u) \leq l(u)$ and
\begin{equation}
\label{eq:int:luup:s1} 
l(u) \leq l_0/\sqrt 3 = h / (\sqrt3 \mu) \, .
\end{equation}
For $0 < h \leq  \mu  C_\Omega^{-1}$ it follows that $l_0 \leq C_\Omega^{-1}$ and $l(u) \leq C_\Omega^{-1}$ for all $u \in U$. Moreover, $h=\mu l_0\leq l_0/4\leq l(u)$. Therefore the assumptions of Proposition \ref{pro:loc:s1}, Proposition \ref{pro:bulk}, and Proposition \ref{pro:boundary} are satisfied.

Depending on $\lambda$ we decompose $U$ into the regions
\begin{align*}
U_0 &= \left\{ u \in U \, : \, \exists \, x \in \partial \Omega \cap B_u \, : \, b(x) = 0 \right\} \, , \\
U^* &= \left\{ u \in U \, : \, \forall \, x \in \partial \Omega \cap B_u \, : \, 0 < |b(x)| < \lambda\right\} \, , \\
U_> &=  \left\{ u \in U \, : \, \exists \, x \in \partial \Omega \cap B_u \, : \, |b(x)| \geq \lambda \right\} \, .
\end{align*}
We remark that $U = U_0 \cup U^* \cup U_>$ and that the three sets are mutually disjoint. Indeed, if $x \in \partial \Omega \cap B_u$ with $u \in U_0$, then by the continuity of $b$, see \eqref{eq:hoelder},
\begin{equation}
\label{eq:proofbest}
|b(x)| \leq \beta(l(u)) \leq \beta \lk \frac{h}{\sqrt 3 \mu} \rk \,,
\end{equation}
and similarly, if $x \in \partial \Omega \cap B_u$ with $u \in U_>$,
$$
|b(x)| \geq \lambda - \beta \lk \frac{h}{\sqrt 3 \mu} \rk \,.
$$
Thus, by our assumption on $\beta$, we have for all sufficiently small $h>0$ (depending on $\mu$ and $\lambda$) that $\beta \lk \frac{h}{\sqrt 3 \mu} \rk < \lambda - \beta \lk \frac{h}{\sqrt 3 \mu} \rk$. Thus $U_0\cap U_>=\emptyset$, as claimed. We can also make sure that for all sufficiently small $h$
$$
|b(x)| \leq \sqrt 3 \mu \leq h/l(u)
\qquad
\text{for all}\ x \in \partial \Omega \cap B_u \
\text{with}\ u \in U_0
$$
and
\begin{equation}
\label{eq:blower}
|b(x)| \geq \lambda /2
\qquad
\text{for all}\ x \in \partial \Omega \cap B_u \
\text{with}\ u \in U_> \,.
\end{equation}

To estimate error terms we put, similarly as in \eqref{eq:bpm},
$$
b^-_u = \inf_{x \in \partial \Omega \cap B_u} b(x) \, , \quad b^i_u = \inf_{x \in \partial \Omega \cap B_u} |b(x)| \, , \quad b^s_u = \sup_{x \in \partial \Omega \cap B_u} |b(x)| \, .
$$

First, we apply Proposition \ref{pro:loc:s1}. Then, in order to estimate $\tr (\phi_u H(b) \phi_u )_-$, we use  (\ref{eq:bk}) for $u \in \Omega \setminus U$,  (\ref{eq:bd:1}) for $u \in U_>$, \eqref{eq:bd:2} for $u \in U_0$, and (\ref{eq:bd:3}) for $u \in U^*$. We obtain
$$
- R^- \leq \lo \int_{\R^d} \int_\Omega \phi_u^2(x) \frac{dx \, du}{l(u)^d h^d} +  \int_{U}   \int_{\partial \Omega} L^{(2)}_d(b(x)) \phi_u^2(x)  \frac{d\sigma(x) \,du}{l(u)^d h^{d-1}} - \tr \lk H(b) \rk_- \leq R^+ \, ,
$$
with 
\begin{align*}
R^- = & \int_{U_>} \left|R_{bd}(h,l(u),b^-_u,b^i_u)\right| \frac{du}{l(u)^d} + \int_{U_0} \left| R_0(h,l(u),b^s_u) \right|  \frac{du}{l(u)^d} \\
& +  \int_{U_0}   \int_{\partial \Omega} \left|L^{(2)}_d(b(x)) - \frac 14 L^{(1)}_{d-1} \right| \phi_u^2(x)  \frac{d\sigma(x) \,du}{l(u)^d h^{d-1}} + \int_{U^*} \left|R_0'(h,l(u),b_u^-)\right|   \frac{du}{l(u)^d} \\
& +  \int_{U^*}   \int_{\partial \Omega} \left|L^{(2)}_d(b(x))\right| \phi_u^2(x)  \frac{d\sigma(x) \,du}{l(u)^d h^{d-1}} + C  l_0^{-1}  h^{-d+2} \lk 1+(b_m)^{d+1}_- h l_0^{-1} \rk 
\end{align*}
and 
\begin{align*}
R^+ = & \int_{U_>} \left|R_{bd}(h,l(u),b^-_u,b^i_u)\right| \frac{du}{l(u)^d} + \int_{U_0} \left| R_0(h,l(u),b^s_u) \right|  \frac{du}{l(u)^d} \\
& +  \int_{U_0}   \int_{\partial \Omega} \left|L^{(2)}_d(b(x)) - \frac 14 L^{(1)}_{d-1} \right| \phi_u^2(x)  \frac{d\sigma(x) \,du}{l(u)^d h^{d-1}}+ \int_{U^*}   \left|R_0'(h,l(u),b_u^-)\right|   \frac{du}{l(u)^d} \\
& +  \int_{U^*}   \int_{\partial \Omega} \left|L^{(2)}_d(b(x))\right| \phi_u^2(x)  \frac{d\sigma(x) \,du}{l(u)^d h^{d-1}} + C \int_{\Omega \setminus U} l(u)^{-2} du \, h^{-d+2} \, .
\end{align*}

In the main term we change the order of integration and use the partition of unity property  (\ref{eq:int:unity:s1}) to obtain
$$
\lo \int_{\R^d} \int_\Omega \phi_u^2(x) dx \, \frac{du}{l(u)^d} h^{-d} = \lo |\Omega| h^{-d} 
$$
and
$$
 \int_{U}   \int_{\partial \Omega} L^{(2)}_d(b(x)) \phi_u^2(x)  \frac{d\sigma(x) \,du}{l(u)^d h^{d-1}} = \int_{\partial \Omega}  L^{(2)}_d(b(x)) d\sigma(x) h^{-d+1} \, .
$$
Thus, we get
$$
-R^- \leq \lo |\Omega| h^{-d} +  \int_{\partial \Omega}  L^{(2)}_d(b(x)) d\sigma(x) h^{-d+1}  - \tr \lk H(b) \rk_-   \leq R^+ \, ,
$$
and to complete the proof it remains to bound the remainder terms $R^\pm$.

We now argue that the last term in the definition of $R^+$ is controlled by the last term in the definition of $R^-$, that is, by
\begin{equation}
\label{eq:lastrem}
C l_0^{-1} h^{-d+2} \lk 1+(b_m)_-^{d+1} h l_0^{-1} \rk \leq \, Ch^{-d+1} \mu \lk 1+\|b\|_\infty^{d+1} \rk.
\end{equation}
To prove this, we note that for $u \in \Omega \setminus U$ we have $d(u) \geq l(u) \geq l_0/4$ and 
$$
\int_{\Omega \setminus U} l(u)^{-2} du \, \leq \, C \lk 1 + \int_{\{d(u) \geq l_0/4 \} } d(u)^{-2} du \rk  \leq C \lk 1 + \int_{l_0/4}^\infty t^{-2} \, |\partial \Omega_t | \, dt \rk \, .
$$
Here $|\partial \Omega_t|$ denotes the surface area of the boundary of $\Omega_t = \{ x \in \Omega \, : \, d(x) > t \}$. Using the fact that $|\partial \Omega_t|$ is uniformly bounded and that $|\partial \Omega_t| = 0$ for large $t$, we get
\begin{equation}
\label{eq:int:U1}
\int_{\Omega \setminus U} l(u)^{-2} du  \leq  C l_0^{-1} \leq C \mu h^{-1} \, .
\end{equation}
This proves that the last term in $R^+$ is bounded by \eqref{eq:lastrem}.

To proceed, we note that inequalities (\ref{eq:int:luup:s1}) and (\ref{eq:lulower}) show that $l(u)$  for $u \in U$ is comparable with $l_0$. Since $B_u \cap \partial \Omega \neq \emptyset$ we find $d(u) < l(u) \leq C l_0$ and, for any positive and non-decreasing function $r$,
\begin{equation}
\label{eq:int:U2}
\int_{U} r(l(u)) du \, \leq \, C r(Cl_0) \int_{\{ d(u) \leq l_0 \} } du \, \leq \, C r(Cl_0) l_0\, . 
\end{equation}
Thus, if we insert  the identity $l_0 = h \mu^{-1}$ and the estimates \eqref{eq:proofbest}, \eqref{eq:blower}, (\ref{eq:int:U2}) and  (\ref{eq:int:U1}) into the expressions for $R^-$ and $R^+$, we find that both are bounded by a constant times
\begin{align*}
R = \, &h^{-d+1}  \left(1+ \|b\|_\infty^{d+1} \right) \lk \mu + \frac \mu \lambda+  \omega \lk \frac{Ch}{\mu} \rk \frac 1 \mu + \beta \lk \frac{Ch}{\mu} \rk \rk\\
& + h^{-d+1} \lk |U^*| \frac{\mu}{h} \lk 1+ \omega \lk \frac{Ch}{\mu} \rk \frac 1 \mu  \rk +  \frac 1 \mu \beta \lk \frac{h}{\sqrt 3 \mu} \rk \lk 1 + \left| \ln \beta \lk \frac{h}{\sqrt 3 \mu} \rk \right| \rk \rk \, .
\end{align*}
Here we used the facts that $|U_0| \leq |U| \leq Cl_0$ and $|L^{(2)}_d(b(x)) - \frac 14 L^{(1)}_{d-1}| \leq C \beta(h/\sqrt 3 \mu)$ for $x \in B_u \cap \partial \Omega$ with $u \in U_0$.

To estimate $|U^*|$ we apply Lemma \ref{lem:parallel}, given in the appendix, to the set $N = \{x \in \partial \Omega : 0 < |b(x)|<\lambda \}$. By the defining property \eqref{eq:delta} of $\delta$ we obtain
$$
\limsup_{h \da 0} \frac \mu h |U^*| = \limsup_{l_0 \da 0} \frac 1{l_0} |U^*| \leq C \delta(\lambda) \, .
$$
Hence, by our assumptions on $\omega$ and $\beta$, it follows that
$$
\limsup_{h \da 0} \lk h^{d-1} R \rk \leq \left(1+ \|b\|_\infty^{d+1} \right) \lk \mu + \frac \mu \lambda \rk + C \delta(\lambda) \, .
$$
By our assumption on $\delta$, the right hand side can be made arbitrarily small by choosing first $\lambda$ small and then $\mu$ small. This completes the proof of Theorem \ref{thm:1}.
\end{proof}

\begin{proof}[Proof of Theorem \ref{thm:small}]
This proof is similar to the proof of Theorem \ref{thm:1} above. Again we choose
$$
U =  \{ u \in \R^d \, : \, \partial \Omega \cap B_u \neq \emptyset \} \,. 
$$
and we assume that $l_0 = h \mu^{-1}$ with $0 < \mu \leq 1/4$. Then $h \leq l(u)$ for all $u \in U$.

Let us choose $h$ small enough such that $|b(x)| = |b_0(x)| \theta(h) \leq \sqrt 3 \mu \leq h/l(u)$ for all $x \in \partial \Omega$ and $u \in U$. Then we can apply (\ref{eq:bd:2}) to estimate  $\tr (\phi_u H(b) \phi_u )_-$ for $u \in U$. This yields
$$
\left| \tr(H(b))_- - \lo |\Omega| h^{-d} - \frac 14 L^{(1)}_{d-1} |\partial \Omega| h^{-d+1}  \right| \leq \int_U \left|R_0(h,l(u),b^s_u)\right| \frac{du}{l(u)^d} + Cl_0^{-1} h^{-d+2} \, .
$$
Similarly as above we bound 
$$
\int_U  \left| R_0(h,l(u),b^s_u) \right| \frac{du}{l(u)^d} \leq Ch^{-d+1} \lk \mu + \omega \lk \frac{C h} \mu \rk \frac{1}{\mu} + \|b\|_\infty (1+|\ln \|b\|_\infty |)\frac{1}{\mu} \rk \, .
$$
We multiply this by $h^{d+1}$ and let $h\da 0$ recalling that $\|b\|_\infty =\theta(h)\|b_0\|_\infty = o(1)$. Since $\mu$ can be chosen arbitrarily small, we obtain the claimed asymptotics.
\end{proof}

In this case the proof shows that the remainder $R_h$ from Theorem \ref{thm:small} can be estimated as follows. For all $0 < \mu \leq 1/4$ we have
\begin{equation}
\label{eq:rem2}
|R_h| \leq C h^{-d+1}  \lk \mu + \omega \lk \frac{C h} \mu \rk \frac{1}{\mu} + \theta(h) \|b_0\|_\infty \lk 1+|\ln( \theta(h) \|b_0\|_\infty )| \rk \frac{1}{\mu} \rk \,.
\end{equation}

\begin{proof}[Proof of Theorem \ref{thm:large}]
First, we assume that the negative part of $b$ does not vanish. 
Then in the same way as in  the proof of Theorem \ref{thm:1} we fix parameters $0 < \lambda \leq 1$ and $0 < \mu \leq 1/4$ and set $l_0 = h\mu^{-1}$ and 
$$
U = \{ u \in \R^d \, : \, \partial \Omega \cap B_u \neq \emptyset \}  \, . 
$$
Here we choose
$$
\tilde U^* = \{ u \in U \, : \, \exists \, x \in \partial \Omega \cap B_u \, : \,  |b(x)| < \lambda  \} \, . 
$$

Then, similar as in  the proof of Theorem \ref{thm:1}, by applying (\ref{eq:bk}) for $u \in \Omega \setminus U$,  (\ref{eq:bd:1}) for $u \in U \setminus \tilde U^*$, and \eqref{eq:bd:3} for $u \in \tilde U^*$, we obtain 
$$
\left| \tr(H(b))_- - \lo |\Omega| h^{-d} - \int_{\partial \Omega} L^{(2)}_d(b(x)) dx h^{-d+1} \right| \leq CR \\
$$
with
\begin{align*}
R = & h^{-d+1}  \lk  1+ \|b\|_\infty^{d+1} \rk  \lk  \mu + \frac \mu \lambda + \omega \lk \frac{Ch}\mu \rk \frac 1 \mu + \frac{\Theta(h)}{1+\|b\|_\infty} \beta\lk \frac{Ch}\mu \rk \rk  \\
& + h^{-d+1} \lk 1 + \Theta(h)\beta\lk \frac{Ch}\mu \rk \rk^{d+1} \lk 1 +  \omega \lk \frac{Ch}\mu \rk  \rk \, . 
\end{align*}
We emphasize that in order to arrive at this bound we used the estimates $|\tilde U^*| \leq |U| \leq Cl_0$ and 
$$
|L^{(2)}_d(b(x))| \leq C \lk 1+\Theta(h) \beta\lk \frac{h}{\sqrt 3 \mu} \rk \rk^{d+1}
$$
for $x\in \partial\Omega\cap B_u$ with $u\in U^*$. (Note also that the role of $\beta$ in Proposition \ref{pro:boundary} is now played by $\Theta(h)\beta$.)

To simplify the main term we note that $L^{(2)}_d(b) = C_d \pi b^{d+1} + O(\Theta(h)^{d-1})$ as $h \da 0$. Hence,
$$
\int_{\partial \Omega} L^{(2)}_d(b(x)) dx = C_d \pi \int_{\partial \Omega} b(x)_-^{d+1} d\sigma(x) h^{-d+1} + O(\Theta(h)^{d-1} h^{-d+1}) \, .
$$
It remains to note that 
$$
\limsup_{h \da 0} \lk h^{d-1} \Theta(h)^{-d-1} R \rk \leq C \lk \mu + \frac \mu \lambda  \rk 
$$
can be made arbitrarily small. (Since we only assume an $h$-independent upper bound on $\|b_0\|_\infty$, one needs to distinguish here the cases whether $\liminf \Theta^{-1}(1+\|b\|_\infty)$ is positive or zero.)

We now turn to the proof of the second part of the theorem. If the boundary coefficient $b$ is non-negative we argue in the same way as in the proof of Theorem \ref{thm:1}. We obtain  
\begin{align*}
&\left| \tr(H(b))_- - \lo |\Omega| h^{-d} - \int_{\partial \Omega}  L^{(2)}_d(b(x)) d\sigma(x) h^{-d+1} \right| \\
& \leq Ch^{-d+1}   \lk \mu + \frac \mu \lambda+  \omega \lk \frac{Ch}{\mu} \rk \frac 1 \mu + \Theta(h) \beta \lk \frac{Ch}{\mu} \rk  + |U^*| \frac{\mu}{h} \lk 1+ \omega \lk \frac{Ch}{\mu} \rk \frac 1 \mu  \rk \right. \\
& \quad + \left. \frac 1 \mu \Theta(h) \beta \lk \frac{h}{\sqrt 3 \mu} \rk \lk 1 + \left| \ln \lk \Theta(h) \beta \lk \frac{h}{\sqrt 3 \mu} \rk \rk \right| \rk \rk \, .
\end{align*}
In this case the continuity of $L^{(2)}_d(b)$, see \eqref{eq:continuity}, implies
$$
\int_{\partial \Omega}  L^{(2)}_d(b(x)) d\sigma(x) = - \frac 14 L^{(1)}_{d-1}  |\partial \Omega_+|  + \frac 14 L^{(1)}_{d-1}  |\partial \Omega_0| + o(1) \, ,
$$
by dominated convergence as $h \da 0$. Again applying Lemma \ref{lem:parallel} in the same way as in the proof of Theorem \ref{thm:1} we see that all terms equal $o(h^{-d+1})$ as $h \da 0$.
\end{proof}

To summarize this section, we have reduced the proof of our main results to the proof of Proposition \ref{pro:loc:s1}, Proposition \ref{pro:boundary} and Lemma \ref{lem:boundary}.


\section{Local asymptotics in the half-space}
\label{sec:half}

From a technical point of view, this section is the heart of our proof. We analyze in great detail a model operator which is explicitly diagonalizable. More precisely, we prove local estimates corresponding to Proposition \ref{pro:boundary} in the case where $\Omega$ is the half-space $\R^d_+ = \{ (x',x_d) \in \R^{d-1} \times \R_+ \}$ and the boundary coefficient $b$ does not depend on $x$. Let $H^+(b) = -h^2 \Delta - 1$ be the self-adjoint operator in $L^2(\R^d_+)$ generated by the quadratic form
$$
q_b^+[v] = h^2 \int_{\R^d_+} |\nabla v(x)|^2 dx + h  b \int_{\R^{d-1}} 
 |v(x',0)|^2 dx' - \int_{\R^d_+} |v(x)|^2 dx
$$
with form domain $H^1(\R^d_+)$ and with a real constant $b$ independent of $x$.


\subsection{Statement of the results}\label{sec:half:main}

Our goal in this section is to prove the following

\begin{proposition}
\label{pro:half}
Assume that $b \in \R$ is constant. Let $\phi \in C_0^1(\R^d)$ be supported in a ball of radius $l > 0$ and let \eqref{eq:int:gradphi} be satisfied.
Then for $h > 0$
$$
\tr \lk  \phi H^+(b)  \phi \rk_- = \lo \int_{\R^d_+} \phi^2(x) dx \, h^{-d}  + L^{(2)}_d(b) \int_{\R^{d-1}}  \phi^2(x',0) dx' \, h^{-d+1} + R_{hs}(h,l,b) 
$$
with
$$
|R_{hs}(h,l,b)| \leq C l^{d-2} h^{-d+2} \left( 1+ \frac{1 + b_-^{d+1}}{|b|} \right) \, .
$$
For $|b| \leq  h/l \leq 1$ we also have
$$
\tr \lk  \phi H^+(b)  \phi \rk_- = \lo \int_{\R^d_+} \phi^2(x) dx \, h^{-d}  + \frac 14 L^{(1)}_{d-1} \int_{\R^{d-1}}  \phi^2(x',0) dx' \, h^{-d+1} + R_{hs}'(h,l,b) \, . 
$$
with
$$
|R_{hs}'(h,l,b)| \leq C l^{d-2} h^{-d+2} \lk 1+ l^2 h^{-2} |b| (1+|\ln|b||)  \rk  \, .
$$
Here the constants $C > 0$ depend only on $d$ and $C_\phi$.
\end{proposition}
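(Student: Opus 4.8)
Since $b$ is constant and $\partial\R^d_+$ is flat, $H^+(b)$ commutes with translations in $x'=(x_1,\dots,x_{d-1})$, so a partial Fourier transform in $x'$ decomposes it as the direct integral over $\xi'\in\R^{d-1}$ of the one-dimensional Robin operators $-h^2\partial_{x_d}^2+\bigl(h^2|\xi'|^2-1\bigr)$ on $L^2(\R_+)$ with boundary condition $h u'(0)=b\,u(0)$. Each of these is explicitly diagonalizable: it has absolutely continuous spectrum $[h^2|\xi'|^2-1,\infty)$ carried by the generalized eigenfunctions proportional to $p\cos(px_d)+(b/h)\sin(px_d)$ at energy $h^2p^2+h^2|\xi'|^2-1$, together with --- precisely when $b<0$ --- one eigenvalue $-b^2+h^2|\xi'|^2-1$ with normalized eigenfunction proportional to $e^{bx_d/h}$. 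This yields an explicit formula for the diagonal of the integral kernel of $H^+(b)_-$; by translation invariance in $x'$ it is a nonnegative function $\rho_b(x_d)$, so that $\tr\bigl(\phi H^+(b)_-\phi\bigr)=\int_{\R^d_+}\phi^2(x)\rho_b(x_d)\,dx$. Likewise the diagonal of $\mathbf{1}_{(-\infty,0)}(H^+(b))$ is a nonnegative function $n_b(x_d)$, which is $O(h^{-d})$ in the bulk and carries, when $b<0$, a boundary layer of thickness $h/|b|$ coming from the bound state.

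The scheme is the two-sided comparison of $\tr(\phi H^+(b)\phi)_-$ with $\tr(\phi H^+(b)_-\phi)$. The upper bound $\tr(\phi H^+(b)\phi)_-\le\tr(\phi H^+(b)_-\phi)$ is immediate from $\phi H^+(b)\phi\ge-\phi H^+(b)_-\phi$ and the monotonicity of $A\mapsto\tr A_-$. For the lower bound I use $-\tr(\phi H^+(b)\phi)_-=\min_{0\le\gamma\le1}\tr(\gamma\,\phi H^+(b)\phi)\le\tr(\gamma_0\,\phi H^+(b)\phi)$ with the trial operator $\gamma_0=\mathbf{1}_{(-\infty,0)}(H^+(b))$; since $\gamma_0$ commutes with $H^+(b)$ and the Robin boundary term commutes with multiplication by $\phi$, the IMS commutator identity $[\phi,[-h^2\Delta,\phi]]=2h^2|\nabla\phi|^2$ gives $\tr\bigl(\gamma_0\,\phi[H^+(b),\phi]\bigr)=h^2\int|\nabla\phi|^2\,n_b$, whence $\tr(\phi H^+(b)\phi)_-\ge\tr(\phi H^+(b)_-\phi)-h^2\int|\nabla\phi|^2\,n_b$. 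With $|\nabla\phi|\le C_\phi l^{-1}$ supported in a ball of radius $l$, this localization error is $\le C l^{d-2}h^{-d+2}\bigl(1+(1+b_-^{d+1})/|b|\bigr)$.

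It remains to evaluate $\int_{\R^d_+}\phi^2\rho_b$ and match it with the stated main term. Writing $\rho_b(x_d)=\rho_b^\infty+g_b(x_d)$ with $\rho_b^\infty=\lim_{x_d\to\infty}\rho_b(x_d)$ equal to the diagonal value of $(-h^2\Delta-1)_-$ on $\R^d$, namely $L^{(1)}_d h^{-d}$, the bulk part contributes $L^{(1)}_d h^{-d}\int\phi^2$. For the remainder $\int\phi^2(x)g_b(x_d)\,dx$ I replace $\phi^2(x',x_d)$ by $\phi^2(x',0)$, with error controlled by $C l^{d-2}\int_0^\infty x_d|g_b(x_d)|\,dx_d$; this is $\le C l^{d-2}h^{-d+2}$ for the oscillatory continuous-spectrum part --- here $d\ge2$ enters, since integrating out $\xi'$ first and analysing the turning point $h^2|\xi'|^2+h^2p^2=1$ produces an extra $x_d^{-(d-1)/2}$ decay of $g_b$ that makes this integral finite --- plus $\le C l^{d-2}h^{-d+2}(1+b_-^{d+1})/|b|$ for the bound-state part (width $h/|b|$, spectral weight $(1+b^2)^{(d+1)/2}$). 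The surviving term is $\int_{\R^{d-1}}\phi^2(x',0)\,dx'\cdot\int_0^\infty g_b(x_d)\,dx_d$, and the heart of the matter is the identity $\int_0^\infty g_b\,dx_d=L^{(2)}_d(b)\,h^{-d+1}$ with $L^{(2)}_d(b)$ as in \eqref{eq:lz}: integrating $\xi'$ first makes the $x_d$-integral absolutely convergent, and its value comes from a Dirac-mass concentration at zero longitudinal momentum (producing $-\pi/4$, resp.\ $+\pi/4$ when $b=0$), from the off-diagonal part of the generalized-eigenfunction density (producing $\int_0^1(1-p^2)^{(d+1)/2}\frac{b}{b^2+p^2}\,dp$), and, for $b<0$, from the bound state (producing $\pi(b^2+1)^{(d+1)/2}$); the prefactor $C_d$ falls out of the $|\Sph^{d-2}|$-bookkeeping. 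Combining this with the localization error and the $\phi^2$-replacement error gives the first assertion. The main difficulty is precisely this explicit evaluation --- the interchange of the $\xi'$- and $x_d$-integrations and the regularization of the resulting oscillatory integral --- together with keeping every remainder uniform and explicit in $b$ all the way down to $b\to0$; the blow-up of the factor $1/|b|$ traces back to the diverging thickness $h/|b|$ of the bound-state boundary layer.

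For the second assertion one is in the regime $|b|\le h/l\le1$, where the above bound degenerates, so I retain only the constant $\frac14 L^{(1)}_{d-1}=L^{(2)}_d(0)$ and absorb the rest into the error. The boundary term $hb\int_{\R^{d-1}}|v(x',0)|^2dx'$ is now a controllable perturbation: the trace inequality $\int_{\R^{d-1}}|v(x',0)|^2dx'\le C(l^{-1}\|v\|^2+l\|\nabla v\|^2)$ on a ball of radius $l$, combined with $|hb|\le h^2/l$, bounds it by $C(h^2l^{-2}\|v\|^2+h^2\|\nabla v\|^2)$, so that $H^+(b)$ is squeezed between $(1\pm Ch^2l^{-2})$ times shifted Neumann operators; the available Neumann ($b=0$) local asymptotics, obtained exactly as in \cite{FraGei11}, then give the error $C l^{d-2}h^{-d+2}$, and a (not necessarily optimal) tracking of the bound state --- which for $b<0$ has width $h/|b|\ge l$ and therefore contributes $\sim|b|l^dh^{-d}$ to the trace --- together with the low longitudinal-momentum continuous modes produces the additional term $C l^dh^{-d}|b|(1+|\ln|b||)$, the logarithm arising from an $\int_{|b|}^1 dp/p$-type contribution in the momentum variable. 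Finally, the $C^1$-regularity and monotonicity of $b\mapsto L^{(2)}_d(b)$ used after \eqref{eq:continuity} are read off by differentiating \eqref{eq:lz} directly and checking that the continuous- and bound-state contributions combine to a nonpositive derivative, which is the content of the remark following this proposition.
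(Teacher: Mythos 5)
Your strategy is essentially the paper's: diagonalize $H^+(b)$ explicitly (partial Fourier transform in $x'$ together with the half-line Robin eigenfunctions $\psi_b$, $\Psi_b$), bound $\tr(\phi H^+(b)\phi)_-$ from above by $\tr(\phi\, (H^+(b))_-\,\phi)$ and from below via the variational principle with the negative spectral projection as trial density matrix, then evaluate the boundary-layer integral to produce $L^{(2)}_d(b)$, treating the regime of small $|b|$ by comparison with the $b=0$ layer, the logarithm coming from an integral of $\frac{b^2+|b|p}{p^2+b^2}$ over $p\in(0,1)$. The paper performs your IMS step inside the quadratic form, generalized eigenfunction by eigenfunction (Appendix B), which also settles the domain issue that your formal trace manipulations with $\gamma_0=\mathbf{1}_{(-\infty,0)}(H^+(b))$ gloss over; that is a technical rather than substantive difference.

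One quantitative claim in your plan is wrong, though harmlessly so for the stated result: you assert that the error from replacing $\phi^2(x',x_d)$ by $\phi^2(x',0)$ is $O(l^{d-2}h^{-d+2})$ uniformly in $b$ for the continuous-spectrum part, attributing the whole $1/|b|$ blow-up to the bound-state layer. For $b<0$ this intermediate estimate fails: the oscillatory part of the density deviation (the function $I_b$ of Lemma \ref{lem:unifbound}) retains order-one structure out to $t\sim 1/|b|$, because the one-sided exponential in the Fourier transform of $(p-ib)^2/(p^2+b^2)$ then overlaps the half-line $t>0$, and $\int_0^\infty t\,|I_b(t)|\,dt$ is genuinely of size $1/|b|$ for negative $b$ (the paper proves and uses exactly $C(1+1/|b|)$; your uniform bound is plausible only for $b>0$, and the turning-point decay in $d\geq2$ does not rescue it for $b<0$). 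Since the remainder $R_{hs}$ is allowed a factor $(1+b_-^{d+1})/|b|$ anyway, the proposition is unaffected, but the decomposition of the error as you state it cannot be proved. Similarly, in the second part, squeezing $H^+(b)$ between shifted Neumann operators via a trace inequality is a workable alternative skeleton, but the decisive term $C\,l^{d}h^{-d}|b|(1+|\ln|b||)$ is asserted rather than derived there; the paper obtains it directly by comparing $I_b$ with $I_0$ pointwise, which is both simpler and keeps the constants uniform in $b$.
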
 

\begin{remark}
The proposition shows, in particular, that $L^{(2)}_d(b)$ is non-increasing. Indeed, for given boundary coefficients $b \leq b'$ the variational principle implies $\tr(H(b))_- \geq \tr(H(b'))_-$ for all $h>0$, and Proposition \ref{pro:half} thus yields $L^{(2)}_d(b) \geq L^{(2)}_d(b')$.
\end{remark}

The first part of Proposition \ref{pro:half} is the key semi-classical estimate that we will later generalize to curved boundaries and variable $b$'s. The problem with this bound, however, is the $|b|^{-1}$ in the error term which blows up for small values of $b$. For that reason we need to include the second part, which deals with small values of $b$. (In passing, we note that since $L^{(2)}_d(b)$ is continuously differentiable with $L^{(2)}_d(0)=\frac 14 L^{(1)}_{d-1}$, as we will see in Lemma~\ref{lem:lz}, the constant $\frac 14 L^{(1)}_{d-1}$ in the second part of Proposition \ref{pro:half} can be replaced by $L^{(2)}_d(b)$ without changing the form of the error term.)

To deal with the transition region between $|b|\geq 1$ (where the first part of Proposition \ref{pro:half} applies) and $|b|\leq h/l$ (where the second part applies) we need the following rough estimate.

\begin{lemma}
\label{lem:half}
Assume that $b \in \R$ is constant. Let $\phi \in C_0^1(\R^d)$ be supported in a ball of radius $l > 0$ and let \eqref{eq:int:gradphi} be satisfied.
Then for all $0<h\leq l$ we have
$$
\tr \lk  \phi H^+(b)  \phi \rk_- = \lo \int_{\R^d_+} \phi^2(x) dx \, h^{-d}  + R_{hs}''(h,l,b) \
$$
with
$$
|R''_{hs}(h,l,b)| \leq C l^{d-1} h^{-d+1} \left(1+ b_-^{d+1}  \min\{b_-l h^{-1},1 \}\right) \,.
$$
Here  $C > 0$ depends only on $d$ and $C_\phi$.
\end{lemma}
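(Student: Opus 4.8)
\emph{Strategy.} The plan is to bracket $\tr(\phi H^+(b)\phi)_-$ between a lower bound obtained by comparison with the Dirichlet problem and an upper bound expressed through the negative spectral density of $H^+(b)$. The latter is explicit, because $H^+(b)$ is translation invariant in the tangential variable and its transversal factor is a one--dimensional Robin operator. The precise shape of the remainder, including the factor $\min\{b_-lh^{-1},1\}$, will then fall out of estimating one integral in two ways; since we only want a rough bound we are free to treat all boundary contributions as errors of the maximal admissible size.

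\emph{Lower bound.} Restricting the form domain of $q_b^+$ from $H^1(\R^d_+)$ to $H^1_0(\R^d_+)$ leaves the form unchanged, so the variational principle gives $\tr(\phi H^+(b)\phi)_-\ge\tr(\phi H^+_{\mathrm{Dir}}\phi)_-$ for every $b\in\R$, where $H^+_{\mathrm{Dir}}=-h^2\Delta-1$ with Dirichlet conditions. By the half-space Dirichlet estimate of \cite{FraGei11} one has $\tr(\phi H^+_{\mathrm{Dir}}\phi)_-=\lo\int_{\R^d_+}\phi^2\,h^{-d}+O(l^{d-1}h^{-d+1})$, hence $\tr(\phi H^+(b)\phi)_-\ge\lo\int_{\R^d_+}\phi^2\,h^{-d}-Cl^{d-1}h^{-d+1}$, uniformly in $b$; the factor $b_-^{d+1}$ is not needed on this side.

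\emph{Upper bound.} Since $0\le\gamma\le1$ forces $\phi\gamma\phi\le\phi^2$ while $H^+(b)_-\ge0$, we have $\tr(\phi H^+(b)\phi)_-\le\tr(\phi H^+(b)_-\phi)$. After a semiclassical Fourier transform in $x'$, $H^+(b)=\int^\oplus_{\R^{d-1}}(|\xi'|^2-1+T_b)\,d\xi'$, where $T_b=-h^2\partial_t^2$ on $L^2(\R_+)$ with $hu'(0)=bu(0)$ has purely absolutely continuous spectrum $[0,\infty)$, with normalized generalized eigenfunctions $\psi_k(t)\propto\cos kt+\tfrac{b}{hk}\sin kt$, and, only when $b<0$, a single negative eigenvalue $-b^2$ with normalized eigenfunction $t\mapsto\sqrt{2|b|/h}\,e^{-|b|t/h}$. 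Thus the kernel of $H^+(b)_-$ has a diagonal $\rho(x_d)=\rho_{\mathrm{cs}}(x_d)+\rho_{\mathrm{bd}}(x_d)$ that depends on $x_d$ only, $\phi H^+(b)_-\phi$ is trace class with $\tr(\phi H^+(b)_-\phi)=\int_{\R^d_+}\rho(x_d)\phi^2(x)\,dx$ (using $\rho\le Ch^{-d}$ and the compact support of $\phi$), and one bounds the two pieces separately. The continuous--spectrum part satisfies $\rho_{\mathrm{cs}}(x_d)\to\lo h^{-d}$ as $x_d\to\infty$ with $\int_0^\infty|\rho_{\mathrm{cs}}(x_d)-\lo h^{-d}|\,dx_d\le Ch^{-d+1}$ uniformly in $b$, so (using $\sup_{x_d}\int_{\R^{d-1}}\phi^2(\cdot,x_d)\le Cl^{d-1}$) it contributes $\lo\int_{\R^d_+}\phi^2\,h^{-d}+O(l^{d-1}h^{-d+1})$. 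When $b\ge0$ there is no bound state and $\rho=\rho_{\mathrm{cs}}$, so this already finishes the proof in that case. When $b<0$, summing the negative energies $|\xi'|^2-1-b^2$ over $\xi'$ gives $\rho_{\mathrm{bd}}(x_d)=c\,h^{-d}\,|b|\,(1+b^2)^{(d+1)/2}\,e^{-2|b|x_d/h}$ with a dimensional constant $c$, whence
$$
\int_{\R^d_+}\rho_{\mathrm{bd}}(x_d)\phi^2(x)\,dx\le C\,h^{-d}|b|(1+b^2)^{(d+1)/2}\int_{\R^d_+}e^{-2|b|x_d/h}\phi^2(x)\,dx\le C\,h^{-d}|b|(1+b^2)^{(d+1)/2}\,l^{d-1}\min\Big\{\tfrac{h}{|b|},\,l\Big\}\,,
$$
where one uses once that $\int_0^\infty e^{-2|b|x_d/h}dx_d=h/(2|b|)$ and once that $e^{-2|b|x_d/h}\le1$, together with the fact that $\phi$ is supported in a ball of radius $l$. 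The right-hand side equals $C(1+b^2)^{(d+1)/2}l^{d-1}h^{-d+1}\min\{1,|b|l/h\}\le Cl^{d-1}h^{-d+1}(1+b_-^{d+1}\min\{b_-lh^{-1},1\})$. Combining this with the lower bound yields the asserted remainder estimate, with $C$ depending only on $d$ and $C_\phi$.

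\emph{Main obstacle.} Everything above is bookkeeping except the uniform bound $\int_0^\infty|\rho_{\mathrm{cs}}(x_d)-\lo h^{-d}|\,dx_d\le Ch^{-d+1}$, which I expect to be the delicate point. Writing out $\rho_{\mathrm{cs}}$ from the explicit $\psi_k$, its deviation from the bulk value $\lo h^{-d}$ is an oscillation of amplitude $O(h^{-d})$ whose decay in $x_d$ is governed by the fact that $(1-|\xi'|^2-h^2k^2)_+$ is only $C^0$ in $k$ at the turning point $h^2k^2=1-|\xi'|^2$; this produces an $O((h/x_d)^2)$ tail and hence an $x_d$-integral of size $O(h^{-d+1})$. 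Uniformity in $b$ holds because, as $|b|$ runs over $[0,\infty)$, $\rho_{\mathrm{cs}}$ interpolates between the Neumann profile (built from $\cos^2 kt$) and the Dirichlet profile (built from $\sin^2 kt$), both of which satisfy this estimate.
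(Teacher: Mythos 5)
Your architecture is essentially the paper's for the upper bound: you use $\tr(\phi H^+(b)\phi)_-\le\tr(\phi\, H^+(b)_-\,\phi)$, split the diagonal of $H^+(b)_-$ into a continuous-spectrum profile plus a bound-state channel, and your estimate of the bound-state contribution (leading to the factor $(1+b^2)^{(d+1)/2}\min\{1,|b|l/h\}\le C(1+b_-^{d+1}\min\{b_-lh^{-1},1\})$) is exactly the paper's bound $\int\phi^2\Psi^2_{b/h}\,dx\le Cl^dh^{-1}\min\{b_-,hl^{-1}\}$. Your lower bound by Dirichlet bracketing together with the half-space Dirichlet estimate of \cite{FraGei11} is a legitimate shortcut: it replaces the paper's trial-density-matrix construction (the upper bound in Lemma \ref{lem:half:basic}) and is sufficient here, since an error of size $l^{d-1}h^{-d+1}$, uniform in $b$, is all that is needed.

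The genuine gap is exactly at the point you flag but do not prove: the uniform-in-$b$ bound $\int_0^\infty|\rho_{\mathrm{cs}}(x_d)-\lo h^{-d}|\,dx_d\le Ch^{-d+1}$. After scaling, the deviation equals $C_d h^{-d} I_b(x_d/h)$ with $I_b(t)=\int_0^1(1-p^2)^{(d+1)/2}\bigl(\tfrac{p^2-b^2}{p^2+b^2}\cos(2tp)+\tfrac{2pb}{p^2+b^2}\sin(2tp)\bigr)dp$, so what you need is $\int_0^\infty|I_b(t)|\,dt\le C$ uniformly in $b\in\R$, which is the paper's Lemma \ref{lem:unifbound}. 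Your uniformity argument (``$\rho_{\mathrm{cs}}$ interpolates between the Neumann and Dirichlet profiles, both of which satisfy the estimate'') is not a proof: the $b$-family is not a convex combination of the two endpoint profiles — the cross term $\tfrac{2pb}{p^2+b^2}\sin(2tp)$ vanishes at $b=0$ and in the limit $|b|\to\infty$ but is the dominant new contribution for intermediate $b$ — and in each transverse channel the effective Robin parameter is $b/\xi_d$, which sweeps all of $[\,|b|,\infty)$ as $\xi_d$ ranges over $(0,1)$, so no fixed-parameter estimate transfers channelwise. Likewise the turning-point heuristic gives decay of $I_b(t)$ for each fixed $b$, but uniformity is the entire issue because the symbol $(p-ib)^2/(p^2+b^2)$ degenerates at $p=0$ as $b\to0$. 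The paper closes this by computing the Fourier transform of $(p-ib)^2/(p^2+b^2)$ exactly — a multiple of $\delta$ minus a one-sided exponential $|b|e^{-|b|\,\cdot}$ whose total mass is bounded uniformly in $b$ — and convolving with $\check G$, which decays like $|t|^{-(d+3)/2}$ by Bessel asymptotics. You would need this (or an equivalent uniform argument) to complete the proof; with it, the remaining bookkeeping in your proposal is correct.
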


From this lemma we immediately deduce a simple bound that will be useful in the following sections.

\begin{corollary}
\label{cor:half}
Assume that $b \in \R$ is constant. Let $\phi \in C_0^1(\R^d)$ be supported in a ball of radius $l > 0$ and let \eqref{eq:int:gradphi} be satisfied. Then for all  $0 < h \leq l$ the bound
$$
\tr \lk \phi H^+(b) \phi \rk_- \leq C \, l^d \, h^{-d} \lk 1 + b_-^{d+1} h l^{-1}  \rk
$$
holds with a constant $C$ depending only on $d$ and $C_\phi$.
\end{corollary}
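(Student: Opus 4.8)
The plan is to read the bound off directly from Lemma~\ref{lem:half}; the corollary is essentially a bookkeeping consequence, so the proof proposal is short. First I would record the elementary fact that a function $\phi\in C_0^1(\R^d)$ supported in a ball of radius $l$ and satisfying \eqref{eq:int:gradphi} obeys $\|\phi\|_\infty\leq C$ with $C$ depending only on $C_\phi$: every point of that ball lies within distance $l$ of the complement of $\operatorname{supp}\phi$, and along a segment to such a point $|\phi|$ decreases from its value to $0$, hence is bounded by $\|\nabla\phi\|_\infty\,l\leq C_\phi$. Consequently $\int_{\R^d_+}\phi^2\,dx\leq\|\phi\|_\infty^2\,|B|\leq C\,l^d$, so the Weyl term in Lemma~\ref{lem:half} is at most $C\,l^d h^{-d}$.

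Next I would simplify the remainder. Since $0<h\leq l$ we have $l^{d-1}h^{-d+1}=l^d h^{-d}(h/l)\leq l^d h^{-d}$, which absorbs the constant $1$ inside the bracket in the bound for $|R''_{hs}(h,l,b)|$. For the remaining summand, using $\min\{b_-lh^{-1},1\}\leq 1$ gives $l^{d-1}h^{-d+1}b_-^{d+1}$, and this equals $l^d h^{-d}\,b_-^{d+1}\,h l^{-1}$ identically, matching the form claimed. Adding the two contributions to the Weyl bound and invoking $|\tr(\phi H^+(b)\phi)_- - \lo\int_{\R^d_+}\phi^2\,dx\,h^{-d}|\leq |R''_{hs}(h,l,b)|$ from Lemma~\ref{lem:half} yields
\[
\tr\lk\phi H^+(b)\phi\rk_- \leq C\,l^d h^{-d}\lk 1 + b_-^{d+1}\,h l^{-1}\rk,
\]
which is the assertion.

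I do not expect any genuine obstacle here: all the analytic content is contained in Lemma~\ref{lem:half}, whose proof proceeds via the explicit diagonalization of the half-space model operator of Section~\ref{sec:half} and is carried out separately. The only points requiring (minor) care are the uniform sup-norm bound on $\phi$ — which is used repeatedly elsewhere in the paper — and the observation that the hypothesis $h\leq l$ lets one trade each factor of $h/l$ against a power of $l/h$ so that the lower-order remainder terms fit under the single envelope $l^d h^{-d}(1+b_-^{d+1}hl^{-1})$.
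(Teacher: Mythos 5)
Your proposal is correct and follows the same route as the paper, which deduces the corollary immediately from Lemma~\ref{lem:half} by bounding the Weyl term via $\|\phi\|_\infty\leq C_\phi$ (the remark after Corollary~\ref{cor:half}) and absorbing the remainder $|R''_{hs}|\leq C l^{d-1}h^{-d+1}(1+b_-^{d+1}\min\{b_-lh^{-1},1\})$ into $Cl^dh^{-d}(1+b_-^{d+1}hl^{-1})$ using $h\leq l$. Your bookkeeping with $\min\{b_-lh^{-1},1\}\leq 1$ and $l^{d-1}h^{-d+1}\leq l^dh^{-d}$ is exactly the intended argument.
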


The next remark will be used at several places without explicit mentioning in the proofs of Proposition \ref{pro:half} and Lemma \ref{lem:half}.

\begin{remark}
When bounding error terms in the following proofs we will sometimes encounter the term  $\| \phi \|_\infty$, which is not mentioned in Proposition \ref{pro:half} and elsewhere. The reason is that it can be controlled in terms of $C_\phi$. Indeed, for $x$ in the support of $\phi$ we can choose $y$ at the boundary of the support with $|x-y|\leq l$ and use (\ref{eq:int:gradphi}) to estimate
$$
|\phi(x)| = |\phi(x) - \phi(y)| \leq \|\nabla \phi\|_\infty |x-y| \leq C_\phi \, .
$$
Hence, $\|\phi\|_\infty \leq C_\phi$, as claimed.
\end{remark}


\subsection{Analysis of a model operator on the half-line}

The bounds in Proposition \ref{pro:half} and Lemma \ref{lem:half} are based on the following results about the one dimensional operator $-\frac{d^2}{dt^2}$ on the half-line $\R_+$ with boundary condition
\begin{equation}
\label{eq:bchs}
\partial_t v(0) = b \, v(0) \, , \ b \in \R \, .
\end{equation}
For $t \geq 0$ and $b \in \R$ we define
$$
\psi_b(t) = \frac 1{\sqrt{1+b^2}} \cos(t) + \frac b {\sqrt{1+b^2}}  \sin(t) 
$$
and, for $b < 0$,
$$
\Psi_b(t) = \sqrt{-2b} \, e^{bt} \, .
$$
In order to treat positive and negative $b$ without distinction we set $\Psi_b \equiv 0$ for $b \geq 0$.
Then we have
\begin{align}
\label{eq:genef1}
- \partial^2_t \psi_b(t) &=   \psi_b(t) \, , \\
\label{eq:genef2}
- \partial^2_t \Psi_b(t) &=  -b^2 \, \Psi_b(t) \, , 
\end{align}
and all functions satisfy boundary conditions (\ref{eq:bchs}). These functions form a complete system of (generalized) eigenfunctions: For functions $v \in L^2(\R_+)$ we have
\begin{equation}
\label{eq:genef3}
v(t) \, = \, \int_0^\infty  \lk \frac 2\pi \int_0^\infty  \psi_{b/p}(tp) \, \psi_{b/p}(sp) dp + \Psi_b(t) \Psi_b(s) \rk  v(s) \, ds 
\end{equation}
in the sense of $L^2$-convergence. 
This identity holds for continuous $v \in L^1(\R_+) \cap L^2(\R_+)$  and is extended first to $L^1(\R_+) \cap L^2(\R_+)$ and then to $L^2(\R_+)$ as in the case of the ordinary Fourier transform.

We need the following technical result.

\begin{lemma}
\label{lem:unifbound}
For $t \in \R_+$ and $b \in \R$ we have  
$$
\psi_{b}^2(t) \leq  1 \, .
$$
Moreover, the function
$$
I_b(t) = \int_0^1 (1-p^2)^{(d+1)/2}\lk  \frac{p^2-b^2}{p^2+b^2} \cos(2tp) + \frac{2pb}{p^2+b^2} \sin(2tp) \rk dp 
$$
is uniformly bounded with respect to $t \geq 0$ and $b \in \R$. It satisfies
\begin{equation}
\label{eq:iintbound}
\int_0^\infty |I_b(t)| dt \leq C \quad \mbox{and} \quad \int_0^\infty t |I_b(t)| dt \leq C\times \begin{cases} 1& \mathrm{if}\ b=0 \\ \lk 1+\frac 1{|b|} \rk & \mathrm{if}\ b\neq 0 \end{cases}
\end{equation}
with $C>0$ depending only on the dimension. 
\end{lemma}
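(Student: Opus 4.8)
We sketch the three assertions separately.

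The bound $\psi_b^2(t)\le 1$ is immediate: since $(1+b^2)^{-1}+b^2(1+b^2)^{-1}=1$ one can write $\psi_b(t)=\cos(t-\arctan b)$. For the uniform boundedness of $I_b$, I would first observe that $\frac{p-ib}{p+ib}=\frac{(p-ib)^2}{p^2+b^2}$ has real part $\frac{p^2-b^2}{p^2+b^2}$ and imaginary part $-\frac{2pb}{p^2+b^2}$, so the bracket in the definition of $I_b(t)$ equals $\mathrm{Re}\!\big(\frac{p-ib}{p+ib}\,e^{2itp}\big)$ and hence
\[
I_b(t)=\mathrm{Re}\int_0^1 (1-p^2)^{(d+1)/2}\,\frac{p-ib}{p+ib}\,e^{2itp}\,dp .
\]
Since $\big|\frac{p-ib}{p+ib}\big|=1$, this gives $|I_b(t)|\le\int_0^1(1-p^2)^{(d+1)/2}\,dp$, uniformly in $t$ and $b$.

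For the integral bounds I would integrate by parts in $p$, writing $G_b(p)=(1-p^2)^{(d+1)/2}\frac{p-ib}{p+ib}$. Two structural facts drive the argument. The weight $(1-p^2)^{(d+1)/2}$ vanishes to order $(d+1)/2>1$ at $p=1$, so all boundary terms at $p=1$ vanish through the first $\lceil(d+1)/2\rceil$ integrations by parts (at least two, since $d\ge2$), which is also as far as $\partial_p^kG_b$ stays in $L^1(0,1)$. And since $(1-p^2)^{(d+1)/2}$ is even while $\partial_p^m\frac{p-ib}{p+ib}\big|_{p=0}$ is real for $m$ even and purely imaginary for $m$ odd, the value $\partial_p^jG_b(0)$ is real for $j$ even and purely imaginary for $j$ odd; combined with the factor $(-i)^{\,j+1}$ accumulated from $j$ integrations by parts, the $p=0$ boundary term is always purely imaginary and drops out of $\mathrm{Re}(\cdot)$. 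Thus $|I_b(t)|\le(2t)^{-k}\|\partial_p^kG_b\|_{L^1(0,1)}$ for the admissible $k$, and one estimates $\|\partial_p^kG_b\|_{L^1}$ by Leibniz, using $|\partial_p^j(1-p^2)^{(d+1)/2}|\le C(1-p^2)^{(d+1)/2-j}$ and the scaling identity
\[
\int_0^1\frac{|b|\,dp}{(p^2+b^2)^{m/2}}=|b|^{\,2-m}\!\int_0^{1/|b|}\!\frac{du}{(1+u^2)^{m/2}}\,,
\]
which is $O(1)$ for $m=2$ and $O(|b|^{-(m-2)})$ for $m\ge3$; this gives $\|\partial_p^2G_b\|_{L^1}\le C(1+|b|^{-1})$, hence $|I_b(t)|\le C(1+|b|^{-1})t^{-2}$ (and simply $|I_0(t)|\le C\,t^{-(d+3)/2}$ when $b=0$, since then $G_0\equiv(1-p^2)^{(d+1)/2}$, which already settles the case $b=0$).

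Because $t^{-2}$ is exactly borderline for $\int_0^\infty t|I_b(t)|\,dt$, the main work is to extract a bit more decay with the right $b$-dependence. I would split $G_b=\chi_1G_b+\chi_0G_b$ with $\chi_1\in C^\infty$ supported near $p=1$ and $\chi_0=1-\chi_1$ supported in $[0,3/4]$ and even near $0$. In the $\chi_1$-piece the factor $\frac{p-ib}{p+ib}$ is smooth with $b$-uniformly bounded derivatives near $p=1$, so a one-sided endpoint expansion of $\int\chi_1(1-p^2)^{(d+1)/2}(\mathrm{smooth})e^{2itp}\,dp$ gives $O\big((1+t)^{-(d+3)/2}\big)$ with a $b$-independent constant, which is summable against $t\,dt$ since $d\ge2$. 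In the $\chi_0$-piece, which for $b\neq0$ has a $C_0^\infty([0,1))$ integrand, I would rescale $p=|b|u$, $s=t|b|$; it becomes $|b|$ times an oscillatory integral in $u$ whose rough factor $\frac{u-i\sigma}{u+i\sigma}$ ($\sigma=\mathrm{sgn}\,b$) no longer depends on $b$ and whose $u$-derivatives have $b$-uniform $L^1$-norms. Writing $\frac{u-i\sigma}{u+i\sigma}=1-\frac{2i\sigma}{u+i\sigma}$, the ``$1$''-part reproduces (after undoing the scaling) the $b$-independent $\chi_0$-part of $I_0$, while the leading term of the other part is $\mathrm{Re}\int_0^\infty\frac{2i\sigma}{u+i\sigma}e^{2isu}\,du$, which equals $0$ for $\sigma=+1$ and $2\pi e^{-2s}$ for $\sigma=-1$. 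Hence the leading near-$p=0$ contribution to $I_b(t)$ is $0$ when $b>0$ and $-2\pi|b|e^{-2t|b|}$ when $b<0$, and this term contributes exactly $\int_0^\infty 2\pi|b|e^{-2t|b|}\,dt=\pi$ and $\int_0^\infty t\cdot2\pi|b|e^{-2t|b|}\,dt=\pi/(2|b|)$ — the asserted orders. The remaining correction, whose rough factor now vanishes at $u=0$, is controlled by combining its $b$-uniform sup bound, the bound $O(1/t)$ from one rescaled integration by parts (with $b$-independent constant), and the bounds $O(t^{-k}|b|^{-(k-1)})$ from $k$ such integrations by parts, and optimizing the crossover in $t$; together with the $\chi_1$-contribution this yields both inequalities in \eqref{eq:iintbound}.

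The hard part is exactly this last step: the two-fold integration by parts alone gives only $O\big((1+|b|^{-1})t^{-2}\big)$, which fails to be integrable against $t\,dt$, and naively balancing it against the uniform bound produces spurious powers (or logarithms) of $1/|b|$. One really has to localize away from $p=1$, rescale to the scale $|b|$ (where the alternating parity of the $p=0$ boundary terms lets the integration by parts run to all orders with $b$-uniform constants), and exploit the exact cancellation of the leading singular term when $b>0$ — equivalently the explicit $e^{-2t|b|}$ when $b<0$ — to obtain precisely the order $1+|b|^{-1}$. Checking the intermediate range $1\lesssim t\lesssim|b|^{-1}$ and that no logarithmic factor survives is the delicate book-keeping.
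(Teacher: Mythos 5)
Your argument is essentially correct but takes a genuinely different route from the paper. Both proofs start by writing $I_b(t)$ as (the real part of) a Fourier integral of $(1-p^2)_+^{(d+1)/2}$ times the unimodular factor $(p-ib)/(p+ib)$, but the paper then finishes in a few lines via the convolution theorem: the inverse Fourier transform of $(1-p^2)_+^{(d+1)/2}$ is a Bessel function with $|\check G(t)|\leq C\min\{1,|t|^{-(d+3)/2}\}$, while that of $(p-ib)^2/(p^2+b^2)$ is computed exactly as a multiple of $\delta$ plus the one-sided exponential $|b|\chi_{\R_-}(bt)e^{-|bt|}$; a single change of variables then yields both bounds in \eqref{eq:iintbound} from $\int_\R|\check G|\,dt<\infty$ and $\int_\R |t|\,|\check G(t)|\,dt<\infty$, with no sign distinction, no localization in $p$, and no splitting in $t$. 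You instead use endpoint stationary phase at $p=1$, the parity argument killing the $p=0$ boundary terms (correct), the rescaling $p=|b|u$, and the exact evaluation of the extracted term ($0$ for $b>0$, $2\pi|b|e^{-2t|b|}$ for $b<0$, with $t$-integrals exactly $\pi$ and $\pi/(2|b|)$); this is more laborious but elementary and makes the origin of the $1/|b|$ (the boundary mode for $b<0$) transparent, whereas the paper's route hides it in the explicit exponential in $\check H_b$.

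One concrete point in the step you flag as delicate: the three bounds you list for the remaining correction (a $b$-uniform sup bound, $O(1/t)$ from one integration by parts, and $O(t^{-k}|b|^{-(k-1)})$ from $k$ of them) suffice for the weighted integral but not for the first inequality in \eqref{eq:iintbound}: on the range $1\leq t\leq |b|^{-1}$ they give only $O(\log(1/|b|))$. What you need, and what your setup does deliver, is the sup bound in the sharper form $O(|b|)$: the subtracted smooth factor $\chi_0(|b|u)(1-b^2u^2)^{(d+1)/2}-1$ (it is this factor, not the rough one, that vanishes at $u=0$) is $O(b^2u^2)$ near the origin, and after taking real parts the tail of $2i\sigma/(u+i\sigma)$ is harmless, so the correction is $O(|b|)$ uniformly in $t$. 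With $\min\{|b|,\,t^{-1},\,t^{-2}|b|^{-1},\,t^{-3}|b|^{-2}\}$ both inequalities follow without logarithms, so your outline closes once this is made explicit.
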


\begin{proof}
The first assertion follows directly from the definition of $\psi_b$ since 
\begin{equation}
\label{eq:eigenf}
\psi_{b}^2(t) =  \frac 12 + \frac {(1-b^2) \cos(2t) + 2b \sin(2t)}{2(1+b^2)} = \frac 12 + \frac {(1-ib)^2 e^{i2t} + (1+ib)^2 e^{-i2t}}{4(1+b^2)} \, .
\end{equation}
It is clear from the definition that $I_b$ is uniformly bounded. To establish decay in $t$ we write
$$
I_b(t) = \frac 12 \int_\R (1-p^2)^{(d+1)/2}_+ \frac{(p-ib)^2}{p^2+b^2} e^{i2tp} dp \, ,
$$
and set $G(p) = (1-p^2)_+^{(d+1)/2}$ and $H_b(p) = (p-ib)^2/(p^2+b^2)$. Let $\check G$ and $\check H_b$ denote the inverse (distributional) Fourier transforms of $G$ and $H_b$.

It is well known that $\check G(t) = c_d J_{d/2+1}(|t|) |t|^{-d/2-1}$, 
where $J_{d/2+1}$ denotes the Bessel function of the first kind. The absolute value of this Bessel function behaves like $t^{d/2+1}$ as $t \to 0+$ and is bounded by a constant times $t^{-1/2}$ as $t \to \infty$; see  \cite[(9.1.7) and (9.2.1)]{AbrSte64}.  Hence, we have $|\check G(t)| \leq C \min\{1,|t|^{-(d+3)/2}\}$. Moreover, we compute that
$$
\check H_b(t) = (2\pi)^{1/2}\delta(t) - 2^{3/2} \pi^{1/2} |b|\, \chi_{\R_-}(bt)\, e^{-|bt|} \,.
$$

Thus we may rewrite $I_b(t)$ in terms of $\check G$ and $\check H_b$ and get
\begin{align*}
I_b(t) &= \frac 12 \int_\R \check G(2t-u) \check H_b (u) du \\
&= \left(\frac\pi 2\right)^{1/2} \check G(2t) - (2\pi)^{1/2} |b| \int_\R \check G(2t-u) \chi_{\R_-}(bu)\, e^{-|bu|} du \\
& = \left(\frac\pi 2\right)^{1/2} \check G(2t) - (2\pi)^{1/2} \int_0^\infty \check G \lk 2t+ \frac u b \rk e^{-u} du \, .
\end{align*}
In the last change of variables we have assumed that $b\neq 0$. From the bound $|\check G(t)|=|\hat G(-t)| \leq C \min\{1,|t|^{-(d+3)/2}\}$ we easily derive that $\int_0^\infty |\check G(2t+u/b)| dt \leq C$. Moreover,
\begin{align*}
\int_0^\infty t |\check G(2t+u/b)| dt & =\frac14 \int_{u/b}^\infty \left(t-u/b\right) |\check G(t)| dt \\
& \leq \frac14 \left( \int_\R |t| |\check G(t)| dt + \frac{u}{|b|} \int_\R |\check G(t)| dt \right)
\leq C\left( 1+ \frac u{|b|}\right) \,.
\end{align*}
This implies \eqref{eq:iintbound} for $b\neq 0$. The case $b=0$ is similar.
\end{proof}

The next lemma establishes a connection between the function $I_b$ and the coefficient $L^{(2)}_d(b)$ defined in \eqref{eq:lz}.

\begin{lemma}
\label{lem:lz}
For $L^{(2)}_d(b)$ we have the representations
\begin{equation}
\label{eq:lzlem}
 \lz = \begin{cases} C_d  \int_0^\infty I_b(t) dt  & \textnormal{for} \ b \geq 0 \, ,\\
 C_d \lk \int_0^\infty I_b(t) dt + \pi (b^2+1)^{(d+1)/2} \rk  & \textnormal{for} \ b < 0 \, .
       \end{cases}
\end{equation}
The function $b\mapsto\lz$ is countinuously differentiable.
\end{lemma}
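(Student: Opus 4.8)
\emph{Overview and first part (the representation \eqref{eq:lzlem}).} The plan is to prove the two assertions of Lemma~\ref{lem:lz} separately. In view of the definition \eqref{eq:lz} — in which the term $\pi(b^2+1)^{(d+1)/2}$ for $b<0$ is common to both \eqref{eq:lz} and \eqref{eq:lzlem} — it suffices to show
\[
\int_0^\infty I_b(t)\,dt = -\frac\pi4 + \int_0^1 (1-p^2)^{(d+1)/2}\,\frac{b}{b^2+p^2}\,dp \quad (b\neq 0),
\qquad
\int_0^\infty I_0(t)\,dt = \frac\pi4 .
\]
Since $\int_0^\infty|I_b(t)|\,dt<\infty$ by Lemma~\ref{lem:unifbound}, dominated convergence gives $\int_0^\infty I_b(t)\,dt=\lim_{\varepsilon\da 0}\int_0^\infty e^{-\varepsilon t}I_b(t)\,dt$. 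For fixed $\varepsilon>0$ the integrand is absolutely integrable on $(0,\infty)\times(0,1)$, so Fubini together with $\int_0^\infty e^{-\varepsilon t}\cos(2tp)\,dt=\varepsilon(\varepsilon^2+4p^2)^{-1}$ and $\int_0^\infty e^{-\varepsilon t}\sin(2tp)\,dt=2p(\varepsilon^2+4p^2)^{-1}$ yields
\[
\int_0^\infty e^{-\varepsilon t}I_b(t)\,dt
= \int_0^1 (1-p^2)^{(d+1)/2}\left( \frac{p^2-b^2}{p^2+b^2}\,\frac{\varepsilon}{\varepsilon^2+4p^2} + \frac{2pb}{p^2+b^2}\,\frac{2p}{\varepsilon^2+4p^2}\right)dp .
\]
As $\varepsilon\da 0$ the kernel $p\mapsto\varepsilon(\varepsilon^2+4p^2)^{-1}$ is an approximate identity on $(0,\infty)$ of total mass $\int_0^\infty\varepsilon(\varepsilon^2+4p^2)^{-1}\,dp=\pi/4$, concentrating at $p=0$; since $p\mapsto(1-p^2)^{(d+1)/2}(p^2-b^2)/(p^2+b^2)$ is continuous and bounded on $[0,1)$ with value $-1$ at $p=0$ for $b\neq 0$ (and $+1$ for $b=0$), the first summand converges to $-\pi/4$ (resp. $\pi/4$). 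In the second summand $2p(\varepsilon^2+4p^2)^{-1}\to (2p)^{-1}$ pointwise for $p>0$, and the integrand is dominated by $|b|\,(b^2+p^2)^{-1}(1-p^2)^{(d+1)/2}\le|b|^{-1}(1-p^2)^{(d+1)/2}$, so dominated convergence gives the limit $\int_0^1(1-p^2)^{(d+1)/2}b(b^2+p^2)^{-1}\,dp$ (which is $0$ when $b=0$). This proves \eqref{eq:lzlem}.

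\emph{Second part (the $C^1$ property).} Put $F(b)=\int_0^1(1-p^2)^{(d+1)/2}b(b^2+p^2)^{-1}\,dp$, so that by \eqref{eq:lz} we have $L^{(2)}_d(b)=C_d(-\pi/4+F(b))$ for $b>0$, $L^{(2)}_d(b)=C_d(-\pi/4+F(b)+\pi(b^2+1)^{(d+1)/2})$ for $b<0$, and $L^{(2)}_d(0)=C_d\pi/4$. On $(0,\infty)$ and on $(-\infty,0)$ differentiation under the integral sign (on compact subintervals away from $0$ the integrand and its $b$-derivatives are dominated by $b$-integrable functions, and $b\mapsto\pi(b^2+1)^{(d+1)/2}$ is smooth) shows $L^{(2)}_d$ is smooth there, so it remains only to analyze $b=0$. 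There I integrate by parts using $b(b^2+p^2)^{-1}=\partial_p\arctan(p/b)$; the boundary terms vanish since $(1-p^2)^{(d+1)/2}$ vanishes at $p=1$ and $\arctan(p/b)$ vanishes at $p=0$, giving, for every $b\neq0$,
\[
F(b)=(d+1)\int_0^1 p\,(1-p^2)^{(d-1)/2}\arctan(p/b)\,dp ,
\qquad
F'(b)=-(d+1)\int_0^1 \frac{p^2(1-p^2)^{(d-1)/2}}{b^2+p^2}\,dp .
\]
The integrand of $F$ is dominated by $(\pi/2)\,p(1-p^2)^{(d-1)/2}$ and that of $F'$ by $(1-p^2)^{(d-1)/2}$, both integrable on $(0,1)$ and independent of $b$, so by dominated convergence (using $\arctan(p/b)\to\pm\pi/2$ and $p^2(b^2+p^2)^{-1}\to1$ as $b\to0\pm$, for $p>0$) the one-sided limits
\[
\lim_{b\da 0}F(b)=\frac{(d+1)\pi}{2}\int_0^1 p(1-p^2)^{(d-1)/2}\,dp=\frac\pi2 ,
\qquad
\lim_{b\da 0}F'(b)=-(d+1)\int_0^1(1-p^2)^{(d-1)/2}\,dp
\]
exist and are finite; since $F$ is odd and $F'$ even, also $\lim_{b\uparrow0}F(b)=-\pi/2$ and $\lim_{b\uparrow0}F'(b)=\lim_{b\da0}F'(b)$. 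Substituting into the formulas for $L^{(2)}_d$, and using $\pi(b^2+1)^{(d+1)/2}\to\pi$ and its derivative $\to0$ as $b\uparrow0$, one checks that both one-sided limits of $L^{(2)}_d$ at $0$ equal $C_d\pi/4=L^{(2)}_d(0)$ and both one-sided limits of $(L^{(2)}_d)'$ at $0$ equal $-C_d(d+1)\int_0^1(1-p^2)^{(d-1)/2}\,dp$. Thus $L^{(2)}_d$ is continuous at $0$, differentiable on $\R\setminus\{0\}$, and $\lim_{b\to0}(L^{(2)}_d)'(b)$ exists, so by the mean value theorem $L^{(2)}_d$ is differentiable at $0$ with $(L^{(2)}_d)'$ continuous there; hence $L^{(2)}_d\in C^1(\R)$.

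\emph{Main obstacle.} The delicate step is the first one: the $t$-integral converges only conditionally, so the interchange of integrations has to be routed through the Abel regularization $e^{-\varepsilon t}$, and the crux is recognizing that as $\varepsilon\da 0$ the first summand behaves like a point mass at $p=0$ — this is precisely what produces the constant $-\pi/4$ (equivalently, the $-\pi/4$ appearing in \eqref{eq:lz}). A secondary technical nuisance, relevant to the $C^1$ claim, is that the naive derivative $\partial_b\,b(b^2+p^2)^{-1}=(p^2-b^2)(b^2+p^2)^{-2}$ has a non-integrable pointwise limit as $b\da 0$; integrating by parts first (trading $b(b^2+p^2)^{-1}$ for $\partial_p\arctan(p/b)$) removes this obstruction and makes the remaining limits routine dominated convergence. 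As an alternative to the Abel argument one could substitute into $\int_0^\infty I_b(t)\,dt$ the representation of $I_b$ in terms of $\check G$ and $\check H_b$ derived in the proof of Lemma~\ref{lem:unifbound}, but the regularization argument is more self-contained.
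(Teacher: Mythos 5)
Your proposal is correct and follows essentially the same route as the paper: both justify the identity by inserting a regularizer into the conditionally convergent $t$-integral (you use the Abel factor $e^{-\varepsilon t}$ and a Poisson-kernel approximate identity at $p=0$; the paper uses $e^{-\epsilon t^2}$ and Gaussian/Dawson-function limits), with the $-\pi/4$ arising in both cases from the concentration of the cosine term at $p=0$ and the sine term converging to $\int_0^1(1-p^2)^{(d+1)/2}\,b\,(b^2+p^2)^{-1}dp$ by dominated convergence. For the $C^1$ claim the paper merely cites the limits $\pm\pi/2$ at $b=0$ and omits the details, which your integration-by-parts formula for $F$ and $F'$ together with the mean value theorem argument fill in correctly.
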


\begin{proof}
Because of the first bound in \eqref{eq:iintbound} we may apply the dominated convergence theorem to write 
\begin{align*}
\int_0^\infty I_b(t) dt = \lim_{\epsilon \da 0} & \int_0^1 (1-p^2)^{(d+1)/2} \int_0^\infty e^{-\epsilon t^2} \lk \frac{p^2-b^2}{p^2+b^2} \cos(2tp) dt + \frac{2pb}{p^2+b^2} \sin(2tp) \rk dt  dp \\
 = \lim_{\epsilon \da 0} & \lk \frac{\sqrt \pi}{2} \int_0^{1/ \sqrt \epsilon} (1-\epsilon q^2)^{(d+1)/2} \frac{\epsilon q^2 - b^2}{\epsilon q^2 + b^2} e^{-q^2} dq \right.\\
& \left. + \int_0^1 (1-p^2)^{(d+1)/2} \frac{2pb}{p^2+b^2} \frac{1}{\sqrt \epsilon}F \lk \frac {p}{ \sqrt \epsilon} \rk dp \rk \, ,
\end{align*}
where $F(x) = e^{-x^2} \int_0^x e^{y^2} dy$. Using the fact that
$$
\lim_{\epsilon \da 0} \frac1{\sqrt \epsilon} F \lk \frac p {\sqrt \epsilon} \rk = \frac 1{2p}
$$
we find
$$
\int_0^\infty I_b(t) dt  = - \frac \pi 4 + \int_0^1 (1-p^2)^{(d+1)/2} \frac{b}{b^2+p^2} dp
$$
for $b \neq 0$ and $\int_0^\infty I_b(t) dt = \frac \pi 4$ for $b = 0$.  By \eqref{eq:lz} this yields \eqref{eq:lzlem}.

The fact that $b\mapsto\lz$ is $C^1$ away from $b=0$ is elementary. To prove continuity and differentiability at $b=0$ we again use dominated convergence together with the fact that
$$
\lim_{b \to 0 \pm} \int_0^1 (1-p^2)^{(d+1)/2} \frac{b}{b^2+p^2} dp = \pm \frac \pi 2 \, .
$$
We omit the details.
\end{proof}


\subsection{Proof of Propositions \ref{pro:half} and Lemma \ref{lem:half}}

After these preliminaries we can turn to the proof of local asymptotics on the half-space. We split the proof into three lemmas.

\begin{lemma}
\label{lem:half:basic}
Under the conditions of Proposition \ref{pro:half} we have
\begin{align*}
0 \, \leq  \, & 2C_d \int_{\R^d_+}   \int_0^1 \phi^2(x)  (   1-\xi_d^2 )^{(d+1)/2}   \psi_{b/\xi_d}^2 \lk x_d \xi_d /h \rk d\xi_d dx h^{-d}\\
&  + \pi C_d  (b^2+1)^{(d+1)/2} \int_{\R^d_+}  \phi^2(x) \Psi^2_{b/h}(x_d)  dx   h^{-d+1} - \textnormal{Tr} \lk \phi H^+(b) \phi \rk_- \\
\leq \, & C l^{d-2} h^{-d+2}  ( 1+b_-^{d-1} \min \{ b_-, h/l \} ) \, ,
\end{align*} 
where $C_d$ is given in \eqref{eq:lz}. Here the constant $C > 0$ depends only on $d$ and $C_\phi$.
\end{lemma}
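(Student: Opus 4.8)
The starting point is the observation that the main term on the right-hand side is exactly $\tr\lk\phi^2 (H^+(b))_-\rk$. Indeed, combining the semiclassical Fourier transform in the tangential variable $x'$ (which contributes the factor $(2\pi h)^{-(d-1)}$ and the resolution of identity $\delta(x'-y')=(2\pi h)^{-(d-1)}\int_{\R^{d-1}}e^{i(x'-y')\cdot\xi'/h}d\xi'$) with the eigenfunction expansion \eqref{eq:genef3} rescaled by $t=x_d/h$ (which contributes the weight $\tfrac{2}{\pi h}$ on the scattering part and $1$ on the bound state, after noting $\Psi_b(x_d/h)=\sqrt h\,\Psi_{b/h}(x_d)$) yields the explicit diagonalization of $H^+(b)$: its negative subspace is spanned by the scattering modes $\propto e^{ix'\cdot\xi'/h}\psi_{b/p}(x_d p/h)$ with eigenvalue $|\xi'|^2+p^2-1$ and, for $b<0$, the bound modes $\propto e^{ix'\cdot\xi'/h}\Psi_{b/h}(x_d)$ with eigenvalue $|\xi'|^2-b^2-1$. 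Writing the diagonal of the kernel of $(H^+(b))_-$ from this expansion and performing the tangential integration via $\int_{\R^{d-1}}(r-|\xi'|^2)_+\,d\xi'=\tfrac{2|\Sph^{d-2}|}{d^2-1}\,r_+^{(d+1)/2}$ (with $r=1-p^2$, respectively $r=1+b^2$) reproduces precisely the two integrals in the statement together with the constants $2C_d$ and $\pi C_d$. So it remains to show $0\le\tr\lk\phi^2(H^+(b))_-\rk-\tr\lk\phi H^+(b)\phi\rk_-\le C l^{d-2}h^{-d+2}(1+b_-^{d-1}\min\{b_-,h/l\})$.

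\textbf{The two variational bounds.} The lower inequality ``$\ge 0$'' follows from the variational principle $\tr\lk\phi H^+(b)\phi\rk_-=\sup\{-\tr(\gamma\,\phi H^+(b)\phi):0\le\gamma\le1\}$ and the elementary fact $0\le\phi\gamma\phi\le\phi^2$: since $(H^+(b))_-\ge0$ we get $\tr(\gamma\,\phi H^+(b)\phi)=\tr\lk(\phi\gamma\phi)H^+(b)\rk\ge-\tr\lk(\phi\gamma\phi)(H^+(b))_-\rk\ge-\tr\lk\phi^2(H^+(b))_-\rk$, so $\tr\lk\phi H^+(b)\phi\rk_-\le\tr\lk\phi^2(H^+(b))_-\rk$ with no error. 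For the matching lower bound on $\tr\lk\phi H^+(b)\phi\rk_-$ I test the variational principle with $\gamma=P$, the spectral projection of $H^+(b)$ onto $(-\infty,0)$, giving $\tr\lk\phi H^+(b)\phi\rk_-\ge-\tr\lk P\phi H^+(b)\phi\rk$. Now I invoke the IMS localization identity in quadratic-form sense, $\phi H^+(b)\phi=\tfrac12\lk\phi^2 H^+(b)+H^+(b)\phi^2\rk+h^2|\nabla\phi|^2$ (the boundary term $hb\int|\phi v(x',0)|^2$ is already contained in $q_b^+[\phi v]$ and generates no cross term because $2\phi\nabla\phi\cdot\nabla\psi$ integrates by parts against the eigenvalue equation). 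Together with $PH^+(b)=H^+(b)P=-(H^+(b))_-$ this gives, after cyclicity of the trace,
\[
\tr\lk P\phi H^+(b)\phi\rk=-\tr\lk\phi^2(H^+(b))_-\rk+h^2\int_{\R^d_+}|\nabla\phi|^2(x)\,P(x,x)\,dx ,
\]
where $P(x,x)$ is the diagonal of the kernel of $P$. Combining the two bounds sandwiches the error by $h^2\int_{\R^d_+}|\nabla\phi|^2(x)\,P(x,x)\,dx$.

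\textbf{The kernel estimate and the main obstacle.} It remains to bound $h^2\int|\nabla\phi|^2 P(x,x)\,dx\le C h^2 l^{-2}\int_{\operatorname{supp}\phi}P(x,x)\,dx$ using \eqref{eq:int:gradphi}. From the diagonalization, $P(x,x)$ splits into a scattering part $\tfrac{\omega_{d-1}}{(2\pi h)^{d-1}}\tfrac{2}{\pi h}\int_0^1(1-p^2)^{(d-1)/2}\psi_{b/p}^2(x_d p/h)\,dp$ and, for $b<0$, a bound-state part $\tfrac{\omega_{d-1}}{(2\pi h)^{d-1}}(1+b^2)^{(d-1)/2}\Psi_{b/h}^2(x_d)$. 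Using $\psi_{b/p}^2\le1$ (Lemma~\ref{lem:unifbound}), the scattering part integrates over $\operatorname{supp}\phi$ to at most $C l^d h^{-d}$. For the bound-state part the key point is that $\Psi_{b/h}^2$ is an exponential concentrated on scale $h/|b|$, so $\int_0^{2l}\Psi_{b/h}^2(x_d)\,dx_d=1-e^{4bl/h}\le C\min\{1,|b|l/h\}$, whence this part integrates to at most $C h^{-d+1}l^{d-1}(1+|b|^{d-1})\min\{1,|b|l/h\}\le C l^d h^{-d}\lk1+b_-^{d-1}\min\{b_-,h/l\}\rk$ (here $0<h\le l$ is used). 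Multiplying by $C h^2 l^{-2}$ gives the claimed remainder. The genuinely delicate points — the ones I would spend most care on — are: (i) setting up the diagonalization of $H^+(b)$ so that the normalizations from \eqref{eq:genef3}, the rescaling $t=x_d/h$, and the tangential Fourier factors combine to give exactly $2C_d$ and $\pi C_d$ (this bookkeeping is essentially the point of the section); and (ii) justifying the IMS identity and the trace manipulations rigorously near the boundary, which is straightforward once one works with the closed form $q_b^+$ and uses only that $\phi v\in H^1(\R^d_+)$ for $\phi\in C_0^1(\R^d)$, so that no assumption is needed on second derivatives of $\phi$.
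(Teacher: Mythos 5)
Your proposal is correct and takes essentially the same route as the paper: the ``$\geq 0$'' direction via the variational principle and the explicit kernel of $(H^+(b))_-$ (so that the main term is exactly $\tr\lk\phi^2 (H^+(b))_-\rk$), and the other direction by testing with the negative spectral projection of $H^+(b)$, with the sole error being $h^2\int_{\R^d_+}|\nabla\phi|^2$ against the diagonal of the projection kernel, controlled through $\psi_{b/p}^2\leq 1$ and the concentration of $\Psi_{b/h}$ on scale $h/b_-$. Your IMS/cyclicity identity $\tr\lk P\phi H^+(b)\phi\rk=-\tr\lk\phi^2(H^+(b))_-\rk+h^2\int|\nabla\phi|^2P(x,x)\,dx$ is just a compact repackaging of the paper's eigenfunction-wise computation of $q_b^+[\phi f]$ by integration by parts against the eigenvalue equation and boundary condition, and your constant bookkeeping and final remainder agree with the paper's.
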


\begin{proof}
First note that we may rescale $\phi$ and thus assume $l = 1$ without changing the value of $b$. Since $b$ is fixed throughout the proof we write $H^+$ instead of $H^+(b)$.

To prove the lower bound we apply the variational principle and obtain
$$
-\tr (\phi H^+ \phi)_- \, = \, \inf_{0 \leq \gamma \leq 1} \tr (\gamma \phi H^+ \phi) \geq   \inf_{0 \leq \gamma \leq 1} \lk - \tr (\gamma \phi (H^+)_- \phi) \rk \, = \, - \tr (\phi (H^+)_- \phi ) \, .
$$
Let $a^+(x,y)$ denote the integral kernel of $(H^+)_-$. From \eqref{eq:genef1}, \eqref{eq:genef2}, and \eqref{eq:genef3} we see that
\begin{align*}
a^+(x,y) = \, & \frac{4}{(2\pi h)^d} \int_{\R^d_+} \lk |\xi|^2-1 \rk_- e^{i \xi' \cdot (x'-y')/h}  \psi_{b/\xi_d}(x_d \xi_d/h) \psi_{b/\xi_d}(y_d \xi_d/h) d\xi \\
& + \frac{1}{(2\pi h)^{d-1}} \int_{\R^{d-1}} \lk |\xi'|^2-b^2-1 \rk_-  e^{i \xi' \cdot (x'-y')/h}  \Psi_{b/h}(x_d) \Psi_{b/h}(y_d) d\xi'
\end{align*}
and we get 
\begin{align*}
\tr \lk \phi H^+ \phi \rk_- \leq \, & \frac{4}{(2\pi h)^d} \int_{\R^d_+} \int_{\R^d_+} \phi^2(x) (|\xi|^2-1)_- \psi_{b/\xi_d}^2 \lk x_d \xi_d /h  \rk d\xi dx \\
& + \frac{1}{(2\pi h)^{d-1}}\int_{\R^d_+} \int_{\R^{d-1}} \phi^2(x) (|\xi'|^2-b^2-1)_-  \Psi_{b/h}^2\lk x_d \rk d\xi' dx \, .
\end{align*}
Here we perform the $\xi'$-integration and obtain the lower bound.

We proceed to prove the upper bound. To simplify notation write 
\begin{align*}
f(x,\xi) &= e^{ix' \cdot \xi'} \psi_{b/(\xi_d h)}(x_d \xi_d) \\
F(x,\xi') &= e^{ix' \cdot \xi'} \Psi_{b/h}(x_d) \, .
\end{align*}
We define the operator $\gamma =  (H^+)_-^0 $ with kernel $\gamma(x,y) = \gamma_1(x,y) + \gamma_2(x,y)$, where
\begin{align*}
\gamma_1(x,y) \, &= \, \frac{4}{(2 \pi h)^d}  \int_{ \{\xi \in \R^d_+ \, : \, |\xi|<1 \} }   f\lk x, \xi/h  \rk  \overline{f\lk y, \xi/ h  \rk} \, d\xi  \, , \\
\gamma_2(x,y) \, &= \, \frac{1}{(2 \pi h)^{d-1}} \int_{ \{\xi' \in \R^{d-1} \, : \, |\xi'|^2< b^2+1 \} }   F\lk x, \xi'/h  \rk  \overline{F\lk y, \xi'/ h  \rk} \, d\xi' \, .
\end{align*}
Thus, $\gamma$ satisfies $0 \leq \gamma \leq 1$ and a variant of the variational principle, discussed in Appendix \ref{ap:var}, yields
\begin{equation}
\label{eq:tracesum}
-\tr (\phi H^+ \phi)_- \leq \tr (\phi \gamma \phi H^+ ) = \tr (\phi \gamma_1 \phi H^+ )  + \tr ( \phi \gamma_2 \phi H^+) \, .
\end{equation}
We note that the range of $\phi \gamma \phi$, $\phi \gamma_1 \phi$, and $\phi \gamma_2 \phi$ does not belong to the domain of $H^+$. However, the functions $\phi f$ and $\phi F$ belong to the form domain $H^1(\R^d_+)$ of $H^+$. Therefore \eqref{eq:tracesum} is valid if we interpret $\tr (\phi \gamma \phi H^+ )$ in the sense described in the appendix, namely
\begin{equation}
\label{eq:halfbasic}
\tr (\phi \gamma_1 \phi H^+) =  \frac{4}{(2\pi h )^d} \int_{ \{\xi \in \R^d_+ \, : \, |\xi|<1 \} }  q_b^+[\phi f ] \,  d\xi \, ,
\end{equation}
where
$$
q_b^+[\phi f ] = h^2 \left\| \nabla (\phi f) \right\|^2_{L^2(\R^d_+)}  + hb \left\| \phi(\cdot,0) \right\|^2_{L^2(\R^{d-1})} \psi_{b/\xi_d}^2 \lk 0 \rk  - \left\| \phi f \right\|^2_{L^2(\R^d_+)}  \, ,
$$
and similar for $ \tr ( \phi \gamma_2 \phi H^+) $.
In the first summand we integrate by parts and use  \eqref{eq:bchs} and \eqref{eq:genef1} to get
$$
\left\| \nabla (\phi f) \right\|^2_{L^2(\R^d_+)} =  \int_{\R^d_+} \lk \frac{|\xi|^2}{h^2} \phi^2 +|\nabla \phi|^2 \rk  \psi^2_{b/\xi_d} \lk x_d \xi_d / h \rk dx  - \frac bh \left\| \phi(\cdot,0) \right\|^2_{L^2(\R^{d-1})} \psi_{b/\xi_d}^2 \lk 0 \rk \, .
$$
We insert this into \eqref{eq:halfbasic} and due to (\ref{eq:int:gradphi}) and Lemma \ref{lem:unifbound} we can estimate 
\begin{equation}
\label{eq:gamma1}
\tr (\phi \gamma_1 \phi H^+) \leq - \frac{4}{(2\pi h )^d}  \int_{\R^d_+} \int_{\R^d_+} \phi^2(x) \lk |\xi|^2 - 1 \rk_-   \psi^2_{b/\xi_d} \lk x_d \xi_d /h \rk dx d\xi + Ch^{-d+2} \, .
\end{equation}

Note that the second summand in (\ref{eq:tracesum}) is zero for $b\geq 0$. For $b< 0$ we use (\ref{eq:bchs}) and (\ref{eq:genef2}) to show that
\begin{align*}
\tr (\phi \gamma_2 \phi H^+) = & \frac{1}{(2\pi h)^{(d-1)}} \int_{ \{|\xi'|^2<b^2+1 \} }   q_b^+[\phi F ] \,  d\xi' \\
= & \frac{1}{(2\pi h)^{(d-1)}}  \int_{\R^{d-1}} \int_{\R^d_+} \phi^2(x) \lk 1 + b^2 -|\xi'|^2 \rk_+ \Psi^2_{b/h}(x_d) dx \, d\xi' \\
& +  \frac{h^2}{(2\pi h)^{(d-1)}} \int_{\{ |\xi'|^2 < b^2+1 \}} \int_{\R^d_+} |\nabla \phi(x)|^2 \Psi_{b/h}^2(x_d) dx \, d\xi' \, .
\end{align*} 
To estimate the last summand we use $\|\Psi_{b/h}\|_\infty^2 \leq -2bh^{-1}$, $\| \Psi_{b/h} \|_2^2 = 1$, and (\ref{eq:int:gradphi}) to obtain 
$$
\int_{\R^d_+} |\nabla \phi(x)|^2 \Psi^2_{b/h}(x_d) dx \leq C \min \{ -b/h,1 \} \, .
$$
Performing the $\xi'$-integration as before yields 
\begin{align}
\nonumber
\tr (\phi \gamma_2 \phi H^+ ) \leq & - \pi C_d \, h^{-d+1} \, (b^2+1)^{(d+1)/2} \int_{\R^d_+} \phi^2(x)  \Psi^2_{b/h} ( x_d ) dx \\
\label{eq:gamma2}
& + Ch^{-d+2} \lk 1 +b_-^{d-1} \min\{h,b_-\} \rk \, .
\end{align}
Here we also used the fact that $1 + (1+b^2)^{(d-1)/2} \min \{ b_-, h \} \leq C (1 + b_-^{d-1}  \min \{ b_-, h \} )$.  
Hence, the upper bound follows from (\ref{eq:tracesum}), (\ref{eq:gamma1}), and (\ref{eq:gamma2}).
\end{proof}

\begin{lemma}
\label{lem:half:mainterm}
Under the conditions of Proposition \ref{pro:half} we have
\begin{align}
\nonumber
& 2C_d \int_{\R^d_+}   \int_0^1 \phi^2(x)  (   1-\xi_d^2 )^{(d+1)/2}   \psi_{b/\xi_d}^2 \lk x_d \xi_d /h \rk d\xi_d dx \\
\label{eq:bdterm1}
& =  \lo  \int_{\R^d_+} \phi^2(x) dx  + C_d \int_0^\infty I_b(t) dt \int_{\R^{d-1}} \phi^2(x',0) dx' h + r_1(h,b) 
\end{align}
with $|r_1(h,b)| \leq C(1+1/|b|)l^{d-2}h^2$ for $b\neq 0$ and $|r_1(h,0)| \leq Cl^{d-2}h^2$. For $b< 0$ we also have
\begin{equation}
\label{eq:bdterm2}
\int_{\R^d_+} \phi^2(x) \Psi^2_{b/h} \lk x_d \rk dx = \int_{\R^{d-1}} \phi^2(x',0) dx' + r_2(h,b)
\end{equation}
with $|r_2(h,b)| \leq Cl^{d-2}hb_-^{-1}$.
Here the constants $C > 0$ depend only on $d$ and $C_\phi$.
\end{lemma}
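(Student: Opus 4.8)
The plan is to evaluate the two $\xi_d$- (resp.\ $x_d$-) integrals explicitly by exploiting the oscillatory structure of $\psi_{b/\xi_d}^2$ and the exponential structure of $\Psi_{b/h}^2$, extracting a ``bulk'' term proportional to $\int\phi^2\,dx$ and a ``boundary'' term proportional to $\int\phi^2(x',0)\,dx'$. For \eqref{eq:bdterm1} I would first substitute the identity \eqref{eq:eigenf} from Lemma~\ref{lem:unifbound}, writing $\psi_{b/\xi_d}^2(x_d\xi_d/h) = \tfrac12 + \tfrac12 R_{b/\xi_d}(2x_d\xi_d/h)$, where $R_c(\tau) = \bigl((1-c^2)\cos\tau + 2c\sin\tau\bigr)/(1+c^2)$. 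The constant-$\tfrac12$ contribution, after performing the $\xi_d$-integral $\int_0^1(1-\xi_d^2)^{(d+1)/2}\,d\xi_d$ and multiplying by $2C_d$, reproduces exactly $\lo\int_{\R^d_+}\phi^2\,dx$ (this is where the normalization of $C_d$ and $\lo$ enters; one checks the constants match). The oscillatory remainder is $C_d\int_{\R^d_+}\phi^2(x)\int_0^1(1-\xi_d^2)^{(d+1)/2}R_{b/\xi_d}(2x_d\xi_d/h)\,d\xi_d\,dx$; after the change of variables $t = x_d\xi_d/h$ in the inner layer (or rather, treating $x_d$ as the integration variable with $\xi_d$ fixed) one recognizes the combination appearing inside $I_b$. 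Concretely, writing $p = \xi_d$ and integrating in $x_d$ over $\R_+$, the inner $x_d$-integral of $\phi^2(x)$ times the oscillatory factor is approximated by $\phi^2(x',0)\int_0^\infty R_{b/p}(2x_d p/h)\,dx_d$ up to an error controlled by $\|\nabla\phi\|_\infty$; substituting $t = x_d p/h$ turns this into $(h/p)\int_0^\infty R_{b/p}(2t)\,dt$, and after swapping the $p$- and $t$-integrals the $p$-integral $\int_0^1(1-p^2)^{(d+1)/2}\tfrac1p R_{b/p}(2t)\,dp$ is exactly the integrand $I_b(t)$ from Lemma~\ref{lem:unifbound}. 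This yields the claimed boundary term $C_d\int_0^\infty I_b(t)\,dt\cdot\int_{\R^{d-1}}\phi^2(x',0)\,dx'\cdot h$.

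For the error bound $r_1$, the key point is to quantify the replacement of $\phi^2(x',x_d)$ by $\phi^2(x',0)$ inside the oscillatory $x_d$-integral. Using $|\phi^2(x',x_d)-\phi^2(x',0)| \leq C x_d \|\phi\|_\infty\|\nabla\phi\|_\infty \leq C x_d l^{-1}$ and the decay estimate $\int_0^\infty t|I_b(t)|\,dt \leq C(1+1/|b|)$ from \eqref{eq:iintbound}, the error picks up an extra factor $h/l$ relative to the main boundary term of order $h$, producing $|r_1| \leq C(1+1/|b|)l^{d-2}h^2$ (the power $l^{d-2}$ comes from the $(d-1)$-dimensional $x'$-support of volume $\lesssim l^{d-1}$ times one factor $l^{-1}$ from the gradient bound). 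One must also control the ``tail'' where the naive extension of the $x_d$-integral to $\R_+$ versus the actual support of $\phi$ differs, but since $\phi$ is supported in a ball of radius $l$ and $|I_b(t)|$ is integrable with the stated $t$-weighted bound, this is absorbed into the same estimate. For the special case $b=0$ one has $I_0(t) = \int_0^1(1-p^2)^{(d+1)/2}\cos(2tp)\,dp$ with $\int_0^\infty t|I_0(t)|\,dt \leq C$, giving the cleaner bound $|r_1(h,0)| \leq Cl^{d-2}h^2$.

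The identity \eqref{eq:bdterm2} is simpler: since $\Psi_{b/h}(x_d) = \sqrt{-2b/h}\,e^{bx_d/h}$ for $b<0$, we have $\int_0^\infty \Psi_{b/h}^2(x_d)\,dx_d = 1$, and $\Psi_{b/h}^2$ concentrates at $x_d=0$ on scale $h/|b|$. Thus $\int_{\R^d_+}\phi^2(x)\Psi_{b/h}^2(x_d)\,dx = \int_{\R^{d-1}}\phi^2(x',0)\,dx' + r_2$ with $r_2 = \int_{\R^d_+}\bigl(\phi^2(x',x_d)-\phi^2(x',0)\bigr)\Psi_{b/h}^2(x_d)\,dx$; bounding the difference by $Cx_d l^{-1}$ and computing $\int_0^\infty x_d\Psi_{b/h}^2(x_d)\,dx_d = h/(2|b|)$ gives $|r_2| \leq C l^{d-2}h b_-^{-1}$ (again $l^{d-1}$ from the $x'$-support and $l^{-1}$ from the gradient). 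I expect the main obstacle to be the bookkeeping in the first part: carefully justifying the interchange of the $p$- and $t$-integrations to recognize $I_b$, and tracking that all the extension-of-domain and Taylor-expansion errors genuinely carry the weight $t|I_b(t)|$ rather than just $|I_b(t)|$, so that the $1/|b|$ (and no worse) blow-up is obtained — this is exactly the point where \eqref{eq:iintbound} is used in its sharp form.
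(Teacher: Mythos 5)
Your overall strategy is the paper's: split $\psi^2_{b/\xi_d}$ via \eqref{eq:eigenf} into $\tfrac12$ plus an oscillatory part, let the constant part reproduce the Weyl term, recognize $I_b$, Taylor-expand $\phi^2$ in $x_d$, and use the weighted bound \eqref{eq:iintbound}; your treatment of \eqref{eq:bdterm2} is correct and is exactly the paper's. However, the mechanism you describe for producing $I_b$ is wrong as stated. For fixed $p=\xi_d$ the intermediate integral $\int_0^\infty R_{b/p}(2t)\,dt$ does not converge (the integrand is purely oscillatory with no decay), and after your $p$-dependent substitution $t=x_d p/h$ the ``swapped'' $p$-integral $\int_0^1(1-p^2)^{(d+1)/2}p^{-1}R_{b/p}(2t)\,dp$ is \emph{not} the integrand of $I_b(t)$: in $I_b$ the oscillation is $\cos(2tp)$, $\sin(2tp)$ and there is no $1/p$ weight. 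The two iterated integrals agree only formally, after undoing the substitution, so this step as written would not survive scrutiny. The clean route (the paper's) is to integrate in $\xi_d$ first, at fixed $x_d$: by \eqref{eq:eigenf}, $\int_0^1(1-\xi_d^2)^{(d+1)/2}\bigl(2\psi^2_{b/\xi_d}(x_d\xi_d/h)-1\bigr)\,d\xi_d = I_b(x_d/h)$ pointwise, with no change of variables and no interchange to justify; the substitution $x_d=th$ is then made only in the final one-dimensional integral.

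The order of operations also matters for your error estimate. If you replace $\phi^2(x',x_d)$ by $\phi^2(x',0)$ \emph{before} performing the $p$-integration, the resulting error is integrated against $|R_{b/p}(2x_dp/h)|$ for fixed $p$, which has no decay in $x_d$, and you only obtain a bound of order $l^{d}$ --- far too large. The cancellation you need is encoded in the decay of $I_b$, i.e.\ $\int_0^\infty t\,|I_b(t)|\,dt\leq C(1+1/|b|)$, and is available only after the $p$-integral has been carried out; the paper therefore writes $\phi^2(x',x_d)=\phi^2(x',0)+\int_0^{x_d}\partial_s\phi^2(x',s)\,ds$ inside $\int\phi^2(x)\,I_b(x_d/h)\,dx$, bounds $\bigl|\int_{\R^{d-1}}\int_0^{th}\partial_s\phi^2(x',s)\,ds\,dx'\bigr|\leq C l^{d-2}ht$, and concludes with \eqref{eq:iintbound}, which is exactly the estimate $|r_1|\leq C(1+1/|b|)l^{d-2}h^2$ you aim for. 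Since you do invoke the $t$-weighted bound, the intended estimate is the right one, but the argument must be restructured (do the $\xi_d$-integral first) for it to be legitimate; with that repair your proof coincides with the paper's. Finally, there is no ``extension of domain'' issue to absorb: the decomposition is exact, and the boundary coefficient $\int_0^\infty I_b(t)\,dt$ converges absolutely by \eqref{eq:iintbound}.
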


\begin{proof}
Recall that
$$
\lo = \frac{1}{(2\pi)^d} \int_{\R^d} (|\xi|^2-1)_- d\xi = C_d \int_0^1 (1-\xi_d)^{(d+1)/2} d\xi_d \, .
$$
Hence, 
\begin{align*}
& 2C_d \int_{\R^d_+}   \int_0^1 \phi^2(x)  (   1-\xi_d^2 )^{(d+1)/2}   \psi_{b/\xi_d}^2 \lk x_d \xi_d / h \rk d\xi_d dx   \\
& = \lo \int_{\R^d_+} \phi^2(x) dx + C_d \int_{\R^d_+} \int_0^1 \phi^2(x) ( 1- \xi_d)^{(d+1)/2}  \lk 2 \psi^2_{b/\xi_d} \lk x_d \xi_d / h \rk - 1 \rk d\xi_d dx  \, .
\end{align*}
We insert (\ref{eq:eigenf}) and perform the $\xi'$ integration and see that the right-hand side equals
$$
\lo  \int_{\R^d_+} \phi^2(x) dx  + C_d \int_{\R^d_+} \phi^2(x) I_b \lk \frac{x_d}h \rk dx \, ,
$$
with $I_b$ introduced in Lemma \ref{lem:unifbound}.
To analyze the second term we insert 
\begin{equation}
\label{eq:phiexpand}
\phi^2(x) = \phi^2(x',x_d) = \phi^2(x',0) + \int_0^{x_d} \partial_{s} \phi^2(x',s) ds
\end{equation}
and substitute $x_d = th$. We obtain
\begin{align*}
& 2C_d \int_{\R^d_+}   \int_0^1 \phi^2(x)  (   1-\xi_d^2 )^{(d+1)/2}   \psi_{b/\xi_d}^2 \lk x_d  \xi_d / h \rk d\xi_d dx \\
& =  \lo  \int_{\R^d_+} \phi^2(x) dx + C_d \int_0^\infty I_b(t) dt \int_{\R^{d-1}}  \phi^2(x',0) dx' h \\
& \qquad + C \int_{\R^{d-1}} \int_0^\infty  \int_0^{t h} \partial_s \phi^2(x',s) ds \, I_b(t)  dt dx' h \, .
\end{align*}
Using \eqref{eq:int:gradphi} and the remark at the end of Subsection \ref{sec:half:main} we bound
$$
\left| \int_{\R^{d-1}} \int_0^{t h} \partial_s \phi^2(x',s) ds \, dx' \right| \leq C l^{d-2} h t \,.
$$
The first assertion of the lemma now follows from \eqref{eq:iintbound}.

The second assertion follows similarly by inserting (\ref{eq:phiexpand}) and by definition of $\Psi_b$. 
\end{proof}

Note that the error terms in Lemma \ref{lem:half:mainterm} diverge as $b \to 0$. Hence, we also need the following estimates that yield better results for $|b| \leq C h/l$.

\begin{lemma}
\label{lem:half:smallb}
Under the conditions of Proposition \ref{pro:half} we have
\begin{align*}
& 2C_d \int_{\R^d_+}   \int_0^1 \phi^2(x)  (   1-\xi_d^2 )^{(d+1)/2}   \psi_{b/\xi_d}^2 \lk x_d \xi_d /h \rk d\xi_d dx \\
& =  \lo  \int_{\R^d_+} \phi^2(x) dx  + \frac 14 L^{(1)}_{d-1} \int_{\R^{d-1}} \phi^2(x',0) dx' h + \tilde r_1(h,b) 
\end{align*}
with $|\tilde r_1(h,b)| \leq Cl^{d-2}h^2 \left(1+ l^2 h^{-2} |b|(1+| \ln|b||) \right)$. For $b< 0$ we also have
\begin{equation*}
0 \leq \int_{\R^d_+} \phi^2(x) \Psi^2_{b/h} \lk x_d \rk dx \leq C l^d h^{-1} \min\{b_-,h l^{-1} \} \, .
\end{equation*}
Here the constants $C > 0$ depend only on $d$ and $C_\phi$.
\end{lemma}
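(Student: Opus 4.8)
The plan is to deduce the first identity from the already-established case $b=0$ — that is, from \eqref{eq:bdterm1} with $b=0$ — by controlling the difference, and to prove the second, $\Psi$-type bound by a direct computation. (As usual one may assume $h\le l$, the case $h>l$ being trivial since then the term $l^{d-2}h^2$ dominates all the main terms.) The second statement is the easy one: for $b<0$ one has $\Psi_{b/h}(x_d)^2=-2bh^{-1}e^{2bx_d/h}\ge 0$, so the lower bound is immediate, and for the upper bound I would use only that $\phi$ is supported in a ball of radius $l$ and that $\|\phi\|_\infty\le C_\phi$ (the remark at the end of Subsection~\ref{sec:half:main}):
$$
\int_{\R^d_+}\phi^2(x)\Psi_{b/h}(x_d)^2\,dx\le C_\phi^2\,\bigl|\{x'\in\R^{d-1}:|x'|<l\}\bigr|\int_0^l(-2bh^{-1})e^{2bx_d/h}\,dx_d=C\,l^{d-1}\bigl(1-e^{2bl/h}\bigr)\,,
$$
and the elementary inequality $1-e^{-s}\le\min\{1,s\}$, applied with $s=-2bl/h=2b_-l/h$, turns the right-hand side into $Cl^{d-1}\min\{1,b_-l/h\}=Cl^dh^{-1}\min\{b_-,hl^{-1}\}$, which is exactly the asserted bound.

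For the first statement I would start from the identity obtained in the proof of Lemma~\ref{lem:half:mainterm} (insert \eqref{eq:eigenf} and carry out the $\xi'$-integration), which holds for every $b$:
$$
2C_d\int_{\R^d_+}\int_0^1\phi^2(x)(1-\xi_d^2)^{(d+1)/2}\psi_{b/\xi_d}^2(x_d\xi_d/h)\,d\xi_d\,dx=\lo\int_{\R^d_+}\phi^2(x)\,dx+C_d\int_{\R^d_+}\phi^2(x)\,I_b(x_d/h)\,dx\,.
$$
Hence the left-hand side for parameter $b$ differs from the one for $b=0$ only by $C_d\int_{\R^d_+}\phi^2(x)\bigl(I_b(x_d/h)-I_0(x_d/h)\bigr)\,dx$. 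For $b=0$, formula \eqref{eq:bdterm1}, together with $C_d\int_0^\infty I_0(t)\,dt=C_d\pi/4=\frac14 L^{(1)}_{d-1}$ (Lemma~\ref{lem:lz} and \eqref{eq:continuity}) and $\int_0^\infty t|I_0(t)|\,dt\le C$ (from \eqref{eq:iintbound}), already yields the claimed main terms $\lo\int\phi^2+\frac14 L^{(1)}_{d-1}\int_{\R^{d-1}}\phi^2(x',0)\,dx'\,h$ up to an error of order $l^{d-2}h^2$. It therefore remains only to estimate the difference term, and since $\phi^2\le C_\phi^2$ and the $x'$-support has measure $\le Cl^{d-1}$,
$$
\Bigl|\,C_d\int_{\R^d_+}\phi^2(x)\bigl(I_b-I_0\bigr)(x_d/h)\,dx\,\Bigr|\le C\,l^{d-1}h\int_0^\infty|I_b(t)-I_0(t)|\,dt\,.
$$

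The step I expect to be the main obstacle is the bound $\int_0^\infty|I_b(t)-I_0(t)|\,dt\le C|b|(1+|\ln|b||)$ for $b\ge 0$. Here I would reuse the $\check G$-representation from the proof of Lemma~\ref{lem:unifbound}: since $\check H_0=(2\pi)^{1/2}\delta$ one has $I_0(t)=(\pi/2)^{1/2}\check G(2t)$, hence $I_b(t)-I_0(t)=-(2\pi)^{1/2}\int_0^\infty\check G(2t+u/b)\,e^{-u}\,du$. From $|\check G(s)|\le C\min\{1,|s|^{-(d+3)/2}\}$ and the substitution $s=2t+u/b$ one gets $\int_0^\infty|\check G(2t+u/b)|\,dt=\frac12\int_{u/b}^\infty|\check G(s)|\,ds\le C\min\{1,(b/u)^{(d+1)/2}\}\le C\min\{1,b/u\}$, and therefore, splitting at $u=b$,
$$
\int_0^\infty|I_b-I_0|\,dt\le C\int_0^\infty e^{-u}\min\{1,b/u\}\,du\le C\Bigl(b+b\!\int_b^1\!\frac{du}{u}+b\!\int_1^\infty\!\frac{e^{-u}}{u}\,du\Bigr)\le C\,b(1+|\ln b|)\,,
$$
the logarithm being produced exactly by the middle integral. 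Combining this with the $b=0$ computation gives $|\tilde r_1(h,b)|\le C\bigl(l^{d-2}h^2+l^{d-1}h\,|b|(1+|\ln|b||)\bigr)$, which, since $h\le l$, is bounded by $Cl^{d-2}h^2\bigl(1+l^2h^{-2}|b|(1+|\ln|b||)\bigr)$, as claimed. (For $b\ge 0$ the function $\Psi_b$ vanishes identically, so this already suffices for the application in Proposition~\ref{pro:half}; when $b<0$ the bound state in \eqref{eq:genef2} redistributes the spectral mass near the bottom of the spectrum, and the $\Psi$-contribution, controlled by the second statement above, has to be taken into account together with \eqref{eq:bdterm1}.)
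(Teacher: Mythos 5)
Your handling of the second assertion and of the case $b\ge 0$ of the first assertion is correct and essentially parallels the paper: the paper likewise starts from the identity \eqref{eq:twoterm}, adds and subtracts $I_0$, and treats the $I_0$-part by Lemma \ref{lem:half:mainterm} with $b=0$ together with Lemma \ref{lem:lz}. For $b\ge 0$ your integrated estimate $\int_0^\infty|I_b(t)-I_0(t)|\,dt\le C|b|(1+|\ln|b||)$ via the $\check G$-representation is sound and in fact yields the slightly stronger error $Cl^{d-1}h\,|b|(1+|\ln|b||)$.

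There is, however, a genuine gap: the first assertion is stated (and needed) for all real $b$ --- in the proof of the second part of Proposition \ref{pro:half} it is applied for $|b|\le h/l$ of either sign, and via \eqref{eq:bd:2} this includes small negative $b$. Your route does not extend to $b<0$: by Lemma \ref{lem:lz}, $\int_0^\infty I_b(t)\,dt\to -3\pi/4$ as $b\to 0^-$ while $\int_0^\infty I_0(t)\,dt=\pi/4$, so $\int_0^\infty|I_b(t)-I_0(t)|\,dt$ stays bounded away from zero (its value tends at least to $\pi$); the missing mass is precisely what the bound-state term $\pi(b^2+1)^{(d+1)/2}$ compensates in $L^{(2)}_d(b)$. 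Thus the estimate you single out as ``the main obstacle'' is simply false for $b<0$, and your closing suggestion to fall back on \eqref{eq:bdterm1} in that case does not help, because its error contains $1/|b|$ and blows up exactly in the regime $|b|\le h/l$ that this lemma is designed to cover. The repair is what the paper does: bound the difference pointwise in $t$, namely $|I_b(t)-I_0(t)|\le \int_0^1(1-p^2)^{(d+1)/2}\,\frac{2b^2+2p|b|}{p^2+b^2}\,dp\le C|b|(1+|\ln|b||)$ uniformly in $t\ge 0$ and for both signs of $b$, and then integrate $\phi^2$ over its support. This costs a factor $l^d$ instead of your $l^{d-1}h$, but that is exactly what the stated remainder $Cl^{d-2}h^2\bigl(1+l^2h^{-2}|b|(1+|\ln|b||)\bigr)$ permits.
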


\begin{proof}
This proof is a variation of the previous one. Again, we write 
\begin{align}
\nonumber
2C_d \int_{\R^d_+}   \int_0^1 \phi^2(x)  (   1-\xi_d^2 )^{(d+1)/2}   \psi_{b/\xi_d}^2 \lk x_d \xi_d /h \rk d\xi_d dx = & \lo  \int_{\R^d_+} \phi^2(x) dx \\
\label{eq:twoterm}
&  + C_d \int_{\R^d_+} \phi^2(x) I_b \lk \frac{x_d}h \rk dx \, .
\end{align}
We add and subtract $I_0$ to and from $I_b$. According to the previous lemma and Lemma \ref{lem:lz} we have
$$
\left| C_d \int_{\R^d_+} \phi^2(x) I_0 \lk \frac{x_d}h \rk dx - \frac 14 L^{(1)}_{d-1} \int_{\R^{d-1}} \phi^2(x',0) dx' h \right| \leq C l^{d-2} h^2 \,.
$$
Thus, it remains to control
\begin{align*}
 C_d  \int_{\R^d_+} \phi^2(x) \lk I_b \lk \frac{x_d}{h} \rk - I_0 \lk \frac{x_d}{h} \rk \rk dx \,.
\end{align*}
Recalling the definitions of $I_b$ and $I_0$ we see that the absolute value of this term is bounded by
$$
 C  \int_{\R^d_+} \phi^2(x) dx \int_0^1 (1-p^2)^{(d+1)/2} \frac{b^2 + |b|p}{p^2+b^2} dp \leq C l^d |b| (1+ |\ln|b||) \, .
$$
This finishes the proof of the first assertion of the lemma. The second assertion follows similarly as at the end of the proof of Lemma \ref{lem:half:basic}.
\end{proof}

\begin{proof}[Proof of Proposition \ref{pro:half}]
Combining Lemma \ref{lem:half:basic} with \eqref{eq:bdterm1}, \eqref{eq:bdterm2}, and \eqref{eq:lzlem} we obtain the first claim of Proposition \ref{pro:half} with a remainder 
\begin{align*}
|R_{hs}(h,l,b)| & \leq Cl^{d-2}h^{-d+2} \lk 1 + |b|^{-1} + (b^2+1)^{(d+1)/2} b_-^{-1} + b_-^{d-1} \min \{ b_-,h l^{-1} \} \rk \\
& \leq C l^{d-2}h^{-d+2} |b|^{-1} (1+|b|+ b^{d+1}_- ) \, .
\end{align*}

To obtain the second claim we combine Lemma \ref{lem:half:basic} with Lemma \ref{lem:half:smallb}. In this case the remainder is bounded by a constant times 
$$
l^{d-2}h^{-d+2} \lk 1 +l^2h^{-2} |b| (1+ |\ln|b||) + \lk (b^2+1)^{(d+1)/2} l^2 h^{-2}  + b_-^{d-1} \rk \min \{ b_-,h/l \} \rk \, .
$$
For  $|b| \leq h/l \leq 1$ this simplifies to
$$
|R'_{hs}(h,l,b)| \leq C l^{d-2}h^{-d+2} \lk 1 + l^2 h^{-2} |b| ( 1+ |\ln|b|) \rk \, .
$$
This finishes the proof of the proposition.
\end{proof}

\begin{proof}[Proof of Lemma \ref{lem:half}]
Combining Lemma \ref{lem:half:basic} with \eqref{eq:twoterm} we obtain the claim with a remainder bounded by
\begin{align*}
\left|R''_{hs}(h,l,b)\right| = & C_d \int_{\R^d_+} \phi^2(x) \left|I_b \lk \frac{x_d}{h} \rk\right| dx h^{-d} + \pi C_d (b^2+1)^{(d+1)/2}\! \int_{\R^d_+} \phi^2(x) \Psi^2_{b/h}(x_d) dx h^{-d+1} \\
& + C l^{d-2} h^{-d+2} \left( 1+ b_-^{d-1}\min\{b_-, hl^{-1}\}\right) \, .
\end{align*}
In the first term on the right side we substitute $x_d = th$ and use the first inequality in \eqref{eq:iintbound} to bound
$$
\int_0^\infty \int_{\R^{d-1}} \phi^2(x',th) dx' \left| I_b(t)\right| dt  \leq Cl^{d-1} \, .
$$
By Lemma \ref{lem:half:smallb} we also have
$$
0 \leq  \int_{\R^d_+} \phi^2(x) \Psi^2_{b/h}(x_d) dx \leq Cl^d h^{-1} \min\{b_-,h l^{-1} \}
$$
and the proof is complete.
\end{proof}


\section{Local asymptotics close to the boundary}
\label{sec:straight}

Here we show how Proposition \ref{pro:boundary} and Lemma \ref{lem:boundary} follow from the results in Section \ref{sec:half}. We straighten the boundary locally and estimate the operator $H(b)$ given on $\Omega$ in terms of $H^+(b)$ given on the half-space $\R^d_+$.

In this section we work under the conditions of Proposition \ref{pro:boundary}: Let $\phi \in C_0^1(\R^d)$ be supported in a ball of radius $l > 0$ and let inequalities (\ref{eq:hoelder}) and (\ref{eq:int:gradphi}) be satisfied. Then let $B$ denote the open ball of radius $l > 0$, containing the support of $\phi$.
Choose $x_0 \in B \cap \partial \Omega$ and let $\nu_{x_0}$ be the inner normal unit vector at $x_0$. We choose a Cartesian coordinate system such that $x_0 = 0$ and $\nu_{x_0} = (0, \dots, 0, 1)$. 

We now introduce new local coordinates near the boundary. Let $D$ denote the projection of $B$ on the hyperplane given by $x_d =0$.
Since the boundary of $\Omega$ is compact and in $C^1$, there is a constant $C_\Omega>0$, independent of $x_0 \in \partial \Omega$, such that for $0<l\leq C_\Omega^{-1}$ we can find a real function $f \in C^1$, given on $D\subset\R^{d-1}$, satisfying
$$
\partial \Omega \cap B \, = \, \left\{ (x',x_d) \, : \, x' \in D , x_d = f(x') \right\} \cap B \, .
$$
The fact that $\partial \Omega \in C^1$ means that the functions $\nabla f$ corresponding to different points $x_0$ and different values of $l$ share a common modulus of continuity which we denote by $\omega$, that is,
$$
|\nabla f(x') -\nabla f(y')|\leq \omega(|x'-y'|)
$$
for all $x',y'\in D$. We assume that $\omega$ is non-decreasing and we emphasize that $\omega(\delta) \da 0$ as $\delta \da 0$.

The choice of coordinates implies $f(0) = 0$ and $ \nabla f (0) = 0$.
Hence, we can estimate
\begin{equation}
\label{eq:red:fest}
\sup_{x'\in D} |\nabla f(x')| \, \leq \, \sup_{x'\in D} \omega ( |x'|) \, \leq \, \omega (l) \, .
\end{equation}

We introduce new local coordinates given via a diffeomorphism $\varphi \, : \, D \times \R \to \R^d$. We set $y_j \, = \, \varphi_j(x) \, = \, x_j$ for $j = 1, \dots, d-1$ and $y_d \, = \, \varphi_d(x) \, = \, x_d - f(x')$.
Note that the determinant of the Jacobian matrix of $\varphi$ equals $1$ and that the inverse of $\varphi$ is defined on $\textnormal{ran} \, \varphi = D \times \R$. In particular, we get
\begin{equation}
\label{eq:straighten}
\varphi \lk \partial \Omega \cap B \rk \, \subset \, \partial \R^d_+ \, = \, \{
y \in \R^d \, : \, y_d = 0 \} \, .
\end{equation}

Fix $v \in H^1(\Omega)$ with $v \equiv 0$ on $\R^d \setminus \overline B$. For $y \in  \textnormal{ran} \, \varphi$ put $\tilde v (y) = v \circ \varphi^{-1}(y)$ and extend $\tilde v$ by zero to $\R^d$. An explicit calculation shows that the effect of this change of coordinates on the gradient is small:

\begin{lemma}
\label{lem:coordinatechange}
For $v$ and $\tilde v$ defined as above we have $\tilde v \in H^1(\R^d_+)$ and 
$$
\left| \int_{\Omega} |\nabla v(x)|^2 dx - \int_{\R^d_+} |\nabla \tilde v(y)|^2 dy \right| \, \leq  \, C \omega(l)  \int_{\R^d_+} |\nabla \tilde v(y)|^2 dy \, .
$$
\end{lemma}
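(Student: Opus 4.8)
The statement is a change-of-variables estimate for the Dirichlet form under the boundary-straightening diffeomorphism $\varphi$. The plan is to compute $\nabla\tilde v$ in terms of $\nabla v$ via the chain rule, observe that the transformation matrix differs from the identity only in entries involving $\nabla f$, and then use the bound $\|\nabla f\|_\infty\leq\omega(l)$ from \eqref{eq:red:fest} to control the difference between the two integrals.

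\emph{First step.} Since $\varphi_j(x)=x_j$ for $j<d$ and $\varphi_d(x)=x_d-f(x')$, the Jacobian $D\varphi$ is lower-triangular-perturbed identity: $D\varphi = I - e_d\otimes(\nabla f,0)$, more precisely $(D\varphi)_{dj} = -\partial_j f$ for $j=1,\dots,d-1$, $(D\varphi)_{dd}=1$, and $(D\varphi)_{ij}=\delta_{ij}$ otherwise. The determinant is $1$, so no Jacobian factor appears in the volume element, and $\varphi^{-1}$ is given by $x_j=y_j$ ($j<d$), $x_d=y_d+f(y')$. By the chain rule, writing $v=\tilde v\circ\varphi$, one has $\nabla_x v(x) = (D\varphi(x))^T\,\nabla_y\tilde v(\varphi(x))$. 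Hence $|\nabla_x v(x)|^2 = \bigl|(D\varphi)^T\nabla\tilde v\bigr|^2 = (\nabla\tilde v)^T (D\varphi)(D\varphi)^T (\nabla\tilde v)$ evaluated at $\varphi(x)$. Changing variables $x\mapsto y=\varphi(x)$ (with unit Jacobian) turns $\int_\Omega|\nabla v|^2\,dx$ into $\int_{\R^d_+}(\nabla\tilde v)^T G (\nabla\tilde v)\,dy$, where $G=G(y)=(D\varphi)(D\varphi)^T\bigl|_{\varphi^{-1}(y)}$.

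\emph{Second step.} Subtract off the Euclidean form: the claimed difference is $\left|\int_{\R^d_+}(\nabla\tilde v)^T(G-I)(\nabla\tilde v)\,dy\right|$. A direct computation gives $G-I = A$ where $A$ has entries built only from products of $\partial_j f$: namely $A_{dd}=|\nabla f|^2$, $A_{dj}=A_{jd}=-\partial_j f$ for $j<d$, and $A_{ij}=0$ for $i,j<d$. Thus $\|A(y)\|_{\mathrm{op}}\leq C\bigl(\|\nabla f\|_\infty + \|\nabla f\|_\infty^2\bigr)$ pointwise, and since $l\leq C_\Omega^{-1}$ and $\omega(l)\da 0$, we may assume $\omega(l)\leq 1$, so $\|A(y)\|_{\mathrm{op}}\leq C\omega(l)$ using \eqref{eq:red:fest}. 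Therefore $\bigl|(\nabla\tilde v)^T A(\nabla\tilde v)\bigr|\leq C\omega(l)\,|\nabla\tilde v|^2$ pointwise, and integrating over $\R^d_+$ gives the claimed inequality. The membership $\tilde v\in H^1(\R^d_+)$ follows because $\varphi$ is a bi-Lipschitz $C^1$ diffeomorphism of $D\times\R$ onto itself (with $\|D\varphi\|,\|D\varphi^{-1}\|$ bounded by $1+\omega(l)$), so composition preserves $H^1$; one also checks that extending by zero stays in $H^1(\R^d_+)$ since $v$ is supported in $B$ and $\varphi(B)$ is compactly contained in $D\times\R$, with $v$ vanishing near $\partial(\Omega\cap B)\setminus\partial\Omega$.

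\emph{Main obstacle.} There is no deep difficulty here; the content is purely the explicit linear-algebra bound on $G-I$ together with careful bookkeeping of which Sobolev space $\tilde v$ lands in and that the zero extension is legitimate. The one point requiring a little care is justifying $\tilde v\in H^1(\R^d_+)$ and the validity of the change-of-variables formula for $H^1$ (rather than smooth) functions — this is standard for bi-Lipschitz maps, and the uniform $C^1$ control on $\partial\Omega$ ensures all constants are independent of $x_0$ and of $l$ in the stated range. I would spend one or two sentences on this regularity point and otherwise present the matrix computation.
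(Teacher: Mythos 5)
Your computation is correct and is exactly the ``explicit calculation'' the paper alludes to without writing out: the paper gives no proof of this lemma, and your identity $G-I=A$ with $A_{dd}=|\nabla f|^2$, $A_{jd}=A_{dj}=-\partial_j f$, combined with $\|\nabla f\|_\infty\leq\omega(l)$ from \eqref{eq:red:fest}, is the intended argument. The only imprecision is in the regularity remark: $v$ need not vanish \emph{near} $\partial B\cap\Omega$ and $\varphi(B)$ is not compactly contained in $D\times\R$; rather, $v\in H^1(\Omega)$ vanishing a.e.\ outside $\overline B$ has zero trace on $\partial B\cap\Omega$, which is what legitimizes the zero extension of $\tilde v$ -- a harmless fix that does not affect the estimate.
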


Based on this estimate we now prove a result from which Proposition \ref{pro:boundary} follows.
For $\phi \in C_0^\infty(\R^d)$ supported in $B$ define $\tilde \phi = \phi \circ \varphi^{-1}$ on $\textnormal{ran} \, \varphi = D \times \R$ and extend it by zero to $\R^d$. It follows that $\tilde \phi \in C_0^1(\R^d)$ and $\| \nabla \tilde \phi \|_\infty \leq Cl^{-1}$ hold, with $C$ depending only on $C_\phi$ and $\omega$.
We set $b^- = \inf_{x \in \partial \Omega \cap B} b(x)$ and $b^+ =  \sup_{x \in \partial \Omega \cap B} b(x)$ and note that $(b^+)_- \leq (b^-)_- \leq b^s$, where $b^s$ was introduced in \eqref{eq:bpm}. We also recall the notation $H^+(b^\pm)$ introduced in Section~\ref{sec:half}.

\begin{lemma}
\label{lem:str}
Under the conditions of Proposition \ref{pro:boundary} there is a constant $C_\Omega >0$ depending only on $\Omega$ such that for  $0 < l \leq C_\Omega^{-1}$ and $0<h\leq l$ we have
\begin{align}
\label{eq:str:lem1}
\nonumber
& \tr ( \tilde \phi H^+(b^+) \tilde \phi )_- -  C l^d h^{-d} \omega(l) \lk 1 +(b^+)_-^{d+1} h l^{-1} \rk \\
\nonumber
&\leq \tr ( \phi H(b) \phi )_- \\
&\leq \tr ( \tilde \phi H^+(b^-) \tilde \phi )_-  + C l^d h^{-d} \omega(l) \lk 1 +(b^-)_-^{d+1} h l^{-1} \rk \, .
\end{align}
Moreover,  
\begin{equation}
\label{eq:str:lem2}
\int_\Omega \phi^2(x) \, dx  \, = \, \int_{\R^d_+} \tilde \phi^2(y) \, dy \, ,
\end{equation}
\begin{equation}
\label{eq:str:lem4}
\left| \int_{\partial \Omega}\phi^2 (x) d\sigma(x)  - \int_{\R^{d-1}} \tilde \phi^2 (y',0) dy' \right| \leq Cl^{d-1} \omega(l)^2 \, ,
\end{equation}
and
\begin{align}
\nonumber
&\left| \int_{\partial \Omega} L^{(2)}_d (b(x)) \phi^2 (x) d\sigma(x)  -  L^{(2)}_d(b^\pm) \int_{\R^{d-1}} \tilde \phi^2 (y',0) dy' \right| \\
\label{eq:str:lem3}
& \leq \,   Cl^{d-1}  \lk (1+(b^\pm)_-^{d+1})   \omega(l)^2 + (1+(b^\pm)_-^d )   \beta(l) \rk  \, .
\end{align}
\end{lemma}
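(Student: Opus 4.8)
\emph{Strategy.} The plan is to treat the four assertions separately. For \eqref{eq:str:lem2}--\eqref{eq:str:lem3} I would use only the explicit change of variables $\varphi$ and elementary estimates, while \eqref{eq:str:lem1} is the real point and is carried out by transporting trial density matrices through $\varphi$.

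\emph{The geometric identities.} Assertion \eqref{eq:str:lem2} is immediate, since $\varphi$ has Jacobian determinant $1$ and maps $\Omega\cap B$ onto $\R^d_+\cap\varphi(B)$, while $\phi,\tilde\phi$ are supported in $B,\varphi(B)$. For \eqref{eq:str:lem4} and \eqref{eq:str:lem3} I would write $\partial\Omega\cap B$ as the graph of $f$, so that $d\sigma=\sqrt{1+|\nabla f(x')|^2}\,dx'$ on $\partial\Omega\cap B$ whereas the surface measure on $\partial\R^d_+$ is $dx'$, and $\tilde\phi(y',0)=\phi(y',f(y'))$. Then \eqref{eq:str:lem4} follows from $\sqrt{1+t}-1\le t/2$, the bound $\|\nabla f\|_\infty\le\omega(l)$ of \eqref{eq:red:fest}, $\|\tilde\phi\|_\infty\le C$, and $|\{x'\colon(x',0)\in\varphi(B)\}|\le Cl^{d-1}$. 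For \eqref{eq:str:lem3} I would first replace $d\sigma$ by $dx'$ as in \eqref{eq:str:lem4}, using $|L^{(2)}_d(b)|\le C(1+b_-^{d+1})$ and $b(x)_-\le(b^\pm)_-$ on $\partial\Omega\cap B$ to produce the $(1+(b^\pm)_-^{d+1})\omega(l)^2$ term, and then compare $L^{(2)}_d(b(x))$ with the \emph{constant} $L^{(2)}_d(b^\pm)$: since $L^{(2)}_d$ is $C^1$ by Lemma~\ref{lem:lz} with $|(L^{(2)}_d)'(b)|\le C(1+b_-^d)$, and $|b(x)-b^\pm|\le\beta(2l)\le C\beta(l)$ on the ball $B$ by \eqref{eq:hoelder}, the mean value theorem gives the remaining $(1+(b^\pm)_-^d)\beta(l)$ term.

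\emph{Reduction of \eqref{eq:str:lem1} to the half-space.} I would combine the variational principle $\tr(\phi H(b)\phi)_-=\sup_{0\le\Gamma\le1}(-\tr(\phi H(b)\phi\,\Gamma))$ with the push-forward of density matrices under $\varphi$. Given $\Gamma=\sum_k\gamma_k|u_k\rangle\langle u_k|$, write $w_k=(\phi u_k)\circ\varphi^{-1}$; then $\|\phi u_k\|^2_{L^2(\Omega)}=\|w_k\|^2_{L^2(\R^d_+)}$, Lemma~\ref{lem:coordinatechange} controls the Dirichlet integral up to a factor $1-c\omega(l)$, the inequality $b\ge b^-$ on $\partial\Omega\cap B$ bounds the boundary term from below, and the distortion $\sqrt{1+|\nabla f|^2}=1+O(\omega(l)^2)$ of the surface measure is harmless; this gives $q_b[\phi u_k]\ge q_{\hat b}^+[w_k;\hat h]$ for the half-space form with $\hat h=h(1-c\omega(l))^{1/2}$ and a constant $\hat b$ which is an $O(\omega(l))$-relative perturbation of $b^-$ with $(\hat b)_-\le(1+c\omega(l))(b^-)_-$. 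Since $\varphi$ preserves Lebesgue measure, these transports preserve $0\le\,\cdot\,\le1$; taking the supremum yields the clean bound $\tr(\phi H(b)\phi)_-\le\tr(\tilde\phi H^+(\hat b)\tilde\phi;\hat h)_-$, and transporting a near-optimizer of $\tilde\phi H^+(\check b)\tilde\phi$ back to $\Omega$ (using $b\le b^+$ and the reverse inequality of Lemma~\ref{lem:coordinatechange}) gives $\tr(\phi H(b)\phi)_-\ge\tr(\tilde\phi H^+(\check b)\tilde\phi;\check h)_-$ with $\check h=h(1+c\omega(l))^{1/2}$ and $\check b$ an analogous perturbation of $b^+$.

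\emph{The main obstacle.} It then remains to replace the two perturbed half-space traces by $\tr(\tilde\phi H^+(b^-)\tilde\phi)_-$ and $\tr(\tilde\phi H^+(b^+)\tilde\phi)_-$ respectively, at error $Cl^dh^{-d}\omega(l)(1+(b^\pm)_-^{d+1}hl^{-1})$; I expect this step to be the delicate one. After rescaling space (which restores $\hat h$ to $h$ at the cost of rescaling $\tilde\phi$ to a function supported in a ball of radius $l(1+O(\omega(l)))$), one is comparing two half-space traces whose data $(\tilde\phi,h,b)$ agree up to relative errors $O(\omega(l))$. I would \emph{not} invoke Proposition~\ref{pro:half} as a black box, since its remainder $R_{hs}$ is only of order $l^{d-2}h^{-d+2}$, which is too large once $\omega(l)^{1/2}\lesssim h/l$; instead I would re-run the argument of Lemma~\ref{lem:half:basic} (and, for $|b^-|\lesssim h/l$, of Lemma~\ref{lem:half:smallb}, with Lemma~\ref{lem:half} in the transition $(b^-)_-\sim h/l$) simultaneously for both data sets. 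The explicit main terms differ by a factor $\omega(l)$ of their size by the continuity and derivative bounds for $L^{(2)}_d$ from Lemma~\ref{lem:lz}; and the error-producing gradient terms $\|\nabla\tilde\phi\|_\infty^2$, the Taylor-remainder terms in $\phi^2(x',x_d)-\phi^2(x',0)$, and the bound-state contributions estimated via $\psi_b^2\le1$, $\psi_b(0)^2=(1+b^2)^{-1}$, $\|\Psi_{b/h}\|_\infty^2\le-2b/h$, $\|\Psi_{b/h}\|_2=1$ (Lemma~\ref{lem:unifbound}), all likewise differ by a factor $\omega(l)$ between the two data sets because those are $O(\omega(l))$-close. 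This is exactly what converts the $l^{d-2}h^{-d+2}$-type errors into the required $l^dh^{-d}\omega(l)(1+(b^\pm)_-^{d+1}hl^{-1})$, uniformly down to $b^\pm=0$ and for all $0<h\le l$.
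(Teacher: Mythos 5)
Your handling of \eqref{eq:str:lem2}--\eqref{eq:str:lem3} and your reduction of \eqref{eq:str:lem1} to constant coefficients $b^\pm$ (monotonicity in $b$ plus transport of trial density matrices through $\varphi$, with Lemma \ref{lem:coordinatechange} controlling the kinetic term) is essentially the paper's argument and is fine, up to the cosmetic point that on a ball of radius $l$ one gets $\beta(2l)$ rather than $\beta(l)$. The genuine gap is in your final step for \eqref{eq:str:lem1}. You rightly note that Proposition \ref{pro:half} cannot be invoked as a black box, since its error $l^{d-2}h^{-d+2}=l^dh^{-d}(h/l)^2$ exceeds the target $l^dh^{-d}\omega(l)\lk 1+(b^\pm)_-^{d+1}hl^{-1}\rk$ whenever $\omega(l)<(h/l)^2$. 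But your substitute --- re-running Lemmas \ref{lem:half:basic}, \ref{lem:half:smallb} and \ref{lem:half} simultaneously for the two $O(\omega(l))$-close data sets $(\hat h,\hat b)$ and $(h,b^-)$ and claiming that the error terms ``differ by a factor $\omega(l)$ because the data are close'' --- does not work as stated. Those lemmas provide one-sided variational bounds: each trace equals the explicit phase-space expression up to an error which is only known to lie in an interval of length $Cl^{d-2}h^{-d+2}(\cdots)$, and which is not an explicit function of $(h,b)$ (it arises from independent trial-state constructions for the upper and lower bounds). Closeness of the parameters therefore produces no cancellation between the two errors when you subtract the traces; the best this route yields is a difference of order $l^{d-2}h^{-d+2}$ plus $\omega(l)l^dh^{-d}(\cdots)$, which misses the claim exactly in the regime you flagged. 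To make the two non-explicit errors nearly cancel you would need a parameter-stability estimate for the trace itself, which is precisely what is being proved --- the argument is circular.

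The missing idea (and what the paper does) is to avoid comparing two half-space expansions altogether, by a convex splitting of the transported quadratic form. For constant $b^\pm$ and $\tilde v=v\circ\varphi^{-1}$ one shows
$q_{b^\pm}[v]\ \geq\ (1-2C\omega(l))\, q^+_{b^\pm}[\tilde v]\ +\ 2C\omega(l)\, \tilde q^+_{c b^\pm}[\tilde v]$,
where $\tilde q^+$ is the half-space form with $h$ replaced by $h/\sqrt2$ (and boundary coefficient $0$ when $b^\pm\geq0$, $Cb^\pm$ when $b^\pm<0$); the surface-measure distortion $\sqrt{1+|\nabla f|^2}=1+O(\omega(l)^2)$ is absorbed there as well. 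Feeding this into the variational principle for transported density matrices gives
$\tr\lk\phi H(b^\pm)\phi\rk_-\ \leq\ \tr\lk\tilde\phi H^+(b^\pm)\tilde\phi\rk_-\ +\ 2C\omega(l)\,\tr\lk\tilde\phi\tilde H^+(cb^\pm)\tilde\phi\rk_- \,,$
and the sacrificial trace is estimated by the rough bound of Corollary \ref{cor:half}, $Cl^dh^{-d}\lk1+(b^\pm)_-^{d+1}hl^{-1}\rk$. Thus $\omega(l)$ appears as an explicit prefactor of a rough a priori bound, no second asymptotic expansion or parameter comparison is needed, and the error has exactly the form required in \eqref{eq:str:lem1}; the reverse inequality follows by exchanging the roles of $H$ and $H^+$. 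If you want to keep your set-up, the fix is to not absorb the $O(\omega(l))$ distortion into perturbed parameters $(\hat h,\hat b)$, but to peel it off as $\omega(l)$ times a lower-bounded auxiliary form whose negative trace you only bound crudely.
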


\begin{proof}
The definition of $\tilde \phi$ and the fact that $\textnormal{det} J\varphi = 1$ immediately give (\ref{eq:str:lem2}). In view of (\ref{eq:red:fest}) we can estimate
$$
\int_{\partial\Omega} \phi^2(x) d\sigma(x) =  \int_{\R^{d-1}}   \tilde \phi^2(y',0) \sqrt{1+|\nabla f|^2 }  dy' \leq  \int_{\R^{d-1}} \tilde \phi^2(y',0)  dy' +C l^{d-1} \omega(l)^2 \, .
$$
This proves \eqref{eq:str:lem4}. Using the fact that $|L^{(2)}_d(b^\pm)| \leq C (1+(b^\pm)_-^{d+1})$ we find
\begin{align*}
& \left| \int_{\partial \Omega} L^{(2)}_d (b(x)) \phi^2 (x) d\sigma(x) -  L^{(2)}_d(b^\pm) \int_{\R^{d-1}} \tilde \phi^2 (y',0) dy' \right| \\
& \leq \, \int_{\partial \Omega} \left|  L^{(2)}_d (b(x)) -  L^{(2)}_d (b^\pm) \right| \phi^2(x) d\sigma(x) + Cl^{d-1} \omega(l)^2 \lk 1+(b^\pm)_-^{d+1} \rk \, . 
\end{align*}
The continuity of $b$, see (\ref{eq:hoelder}), and the fact that $|\frac{d}{db} L^{(2)}_d(b)| \leq C (1+b_-^d)$ imply
$$
\left| L^{(2)}_d(b^\pm) -  L^{(2)}_d(b(x)) \right| \leq C \beta(l) \lk 1 + (b^\pm)_-^d \rk \, .
$$
Inserting this into the estimate above gives (\ref{eq:str:lem3}). 

To prove (\ref{eq:str:lem1}) we first note that the variational principle implies
$$
\tr \lk \phi H(b^+) \phi \rk_-  \leq \tr \lk \phi H(b) \phi \rk_- \leq \tr \lk \phi H(b^-) \phi \rk_- \, .
$$
Thus it remains to show that
\begin{equation}
\label{eq:strproof}
\left| \tr \lk \phi H(b^\pm) \phi \rk_- - \tr ( \tilde \phi H^+(b^\pm) \tilde \phi )_- \right| \leq Cl^d h^{-d} \omega(l) \lk 1 + (b^\pm)_-^{d+1} h l^{-1} \rk \, .
\end{equation}
To this end choose $v$ and $\tilde v$ as in Lemma \ref{lem:coordinatechange}. First we estimate 
\begin{equation}
\label{eq:formest1}
\int_{\partial \Omega}  |v(x)|^2 d\sigma(x) =  \int_{\R^{d-1}}  |\tilde v(y',0)|^2 \sqrt{1+|\nabla f|^2 }  dy' \geq \int_{\R^{d-1}} |\tilde v (y',0)|^2 dy' \, .
\end{equation}
and using (\ref{eq:red:fest})
\begin{equation}
\label{eq:formest2}
\int_{\partial \Omega}  |v(x)|^2 d\sigma(x) \leq (1+C \omega(l)^2 ) \int_{\R^{d-1}} |\tilde v (y',0)|^2 dy' \, .
\end{equation}
By decreasing, if necessary, the constant $C_\Omega$ from the beginning of this section we may now assume that $l > 0$ is small enough such that $2C \omega(l) \leq 1/2$ holds. Then Lemma \ref{lem:coordinatechange}, \eqref{eq:str:lem2}, and \eqref{eq:formest1} imply, for $b^\pm \geq 0$, 
\begin{align}
\nonumber
q_{b^\pm}[v] \geq  & \, (1-C\omega(l)) h^2 \int_{\R^d_+} |\nabla \tilde v(y)|^2 dy + h b^\pm \int_{\R^{d-1}} |\tilde v (y',0)|^2 dy' - \int_{\R^d_+} |\tilde v(y)|^2 dy \\
\nonumber
= & \, (1-2C\omega(l)) q^+_{b^\pm}[\tilde v] \\
\nonumber
&  \, + 2C\omega(l) \lk  \frac{h^2}2 \int_{\R^d_+} |\nabla \tilde v(y)|^2 dy +  h b^\pm \int_{\R^{d-1}} |\tilde v (y',0)|^2 dy' -  \int_{\R^d_+} |\tilde v(y)|^2 dy \rk \\ 
\label{eq:formdif}
= & \, (1-2C\omega(l)) q^+_{b^\pm}[\tilde v] +2C\omega(l)  \tilde q^+_0[\tilde v]  \, ,
\end{align}
where $\tilde q^+$ is the same form as $q^+$ but with $h$ replaced by $h/\sqrt 2$.
For $b^\pm < 0$ we get, using \eqref{eq:formest2},
\begin{align}
\nonumber
q_{b^\pm}[v] \geq \, & (1-C\omega(l)) h^2 \int_{\R^d_+} |\nabla \tilde v(y)|^2 dy \\
\nonumber
&+ (1+C\omega(l)^2) h b^\pm \int_{\R^{d-1}} |\tilde v (y',0)|^2 dy' - \int_{\R^d_+} |\tilde v(y)|^2 dy \\
\label{eq:formdif2}
\geq & (1-2C\omega(l)) q^+_{b^\pm}[\tilde v] +2C\omega(l)  \tilde q^+_{Cb^\pm}[\tilde v]  \, .
\end{align}

To deduce estimates for $\tr \lk \phi H(b^\pm) \phi \rk_- $ we recall the variational principle
$$
- \tr \lk \phi H(b^\pm) \phi \rk_- \, = \, \inf_{0 \leq \gamma \leq 1} \tr \lk
\phi \gamma \phi H(b^\pm) \rk \, ,
$$
where we can assume that the infimum is taken over trial density matrices $\gamma$ supported in $\overline{B} \times
\overline{B}$. Fix such a $\gamma$. 
For $y$ and $z$ from  $D \times \R$ set
$$
\tilde \gamma (y,z) \, = \, \gamma \lk \varphi^{-1}(y), \varphi^{-1}(z) \rk \, ,
$$
so that $0 \leq \tilde \gamma \leq  1$ holds. Moreover, the range of $\tilde \gamma$ belongs
to the form domain of $\tilde \phi H^+(b^\pm) \tilde \phi$.

First, we assume $b^\pm < 0$. According to (\ref{eq:formdif2}) it follows that 
\begin{align*}
\tr \lk \phi \gamma \phi H(b^\pm) \rk  \geq \, & \tr \lk \tilde \phi \tilde \gamma
\tilde \phi \lk  (1-2C\omega(l))H^+(b^\pm) + 2C \omega(l) \tilde H^+(C b^\pm ) \rk \rk \\
\geq & - (1-2C \omega(l)) \tr \lk \tilde \phi H^+(b^\pm) \tilde \phi \rk_- - 2C \omega(l) \tr \lk \tilde \phi \tilde H^+(C b^\pm) \tilde \phi \rk_- \, ,
\end{align*}
where the operator $\tilde H^+$ is generated by the form $\tilde q^+$. This implies
$$
\tr (\phi H(b^\pm)\phi )_-  \leq  \tr ( \tilde \phi  H^+(b^\pm) \tilde \phi )_- + 2C \omega(l) \tr \lk \tilde \phi \tilde H^+(C b^\pm) \tilde \phi \rk_- 
$$
and Corollary \ref{cor:half} yields
$$
\tr ( \phi H(b^\pm) \phi )_- \, \leq \, \tr (\tilde \phi H^+(b^\pm) \tilde \phi )_- + C  l^d  h^{-d} \omega(l)  \lk 1 +(b^\pm)^{d+1} h/l \rk
$$
for $b^\pm < 0$.

In the same way we can treat non-negative $b^\pm$ using (\ref{eq:formdif}) and we obtain the lower bound in  (\ref{eq:strproof}). Finally, by interchanging the roles of $H(b^\pm)$ and $H^+(b^\pm)$, we get an analogous upper bound and the proof of Lemma \ref{lem:str} is complete.
\end{proof}

\begin{proof}[Proof of Proposition \ref{pro:boundary} and Lemma \ref{lem:boundary}]
The assertions follow from Lemma~\ref{lem:str} together with Proposition~\ref{pro:half}.
\end{proof}

If we combine the estimates of Proposition \ref{pro:bulk}, Corollary \ref{cor:half}, and Lemma \ref{lem:str} we obtain the following simple bound that is useful to estimate error terms.

\begin{corollary}
\label{cor:simple}
There is a constant $C_\Omega > 0$ with the following property. Let $\phi \in C_0^\infty$ be supported in a ball of radius $l > 0$ and let \eqref{eq:int:gradphi} be satisfied. Assume that $b$ is a real constant independent of $x$.

Then for $0 < l \leq C_\Omega^{-1}$ and $0<h\leq l$ the estimate
$$
\tr \lk \phi H(b) \phi \rk_- \leq C l^d h^{-d} \lk 1 + b_-^{d+1}h l^{-1} \rk
$$
holds with a constant $C > 0$ depending only on $d$, $C_\phi$ and $\omega$.
\end{corollary}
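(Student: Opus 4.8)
The plan is to distinguish two cases according to whether the ball $B$ of radius $l$ containing $\operatorname{supp}\phi$ meets $\partial\Omega$ or not, and to reduce each case to one of the three results already at our disposal. First, if $B\cap\partial\Omega=\emptyset$, then since $B$ is connected and $\R^d\setminus\partial\Omega$ is the disjoint union of the open sets $\Omega$ and $\R^d\setminus\overline\Omega$, either $B\subset\Omega$ or $B\cap\overline\Omega=\emptyset$. In the latter situation $\phi$ vanishes on $\Omega$, so $\tr(\phi H(b)\phi)_-=0$ and there is nothing to prove. In the former situation $\phi\in C_0^1(\Omega)$ and $\phi H(b)\phi=\phi(-h^2\Delta-1)\phi$, so Proposition \ref{pro:bulk} applies and gives
$$
\tr\lk\phi H(b)\phi\rk_-\;\leq\;\lo\int_\Omega\phi^2(x)\,dx\;h^{-d}\;\leq\;C\,l^d h^{-d},
$$
where I bounded $\int_\Omega\phi^2\leq\|\phi\|_\infty^2|B|\leq C l^d$ using $\|\phi\|_\infty\leq C_\phi$ (the remark at the end of Subsection \ref{sec:half:main}). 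Since $1\leq 1+b_-^{d+1}hl^{-1}$, this is the asserted bound.

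The remaining case is $B\cap\partial\Omega\neq\emptyset$, where I invoke Lemma \ref{lem:str}. Because $b$ is a constant, the quantities $b^-=\inf_{x\in\partial\Omega\cap B}b(x)$ and $b^+=\sup_{x\in\partial\Omega\cap B}b(x)$ both equal $b$, so the upper inequality in \eqref{eq:str:lem1} reads
$$
\tr\lk\phi H(b)\phi\rk_-\;\leq\;\tr\lk\tilde\phi H^+(b)\tilde\phi\rk_-+C\,l^d h^{-d}\,\omega(l)\lk 1+b_-^{d+1}hl^{-1}\rk.
$$
Here $\tilde\phi=\phi\circ\varphi^{-1}$ lies in $C_0^1(\R^d)$, is supported in a ball of radius $l$, and satisfies $\|\nabla\tilde\phi\|_\infty\leq C l^{-1}$ with $C$ depending only on $C_\phi$ and $\omega$, as recorded before Lemma \ref{lem:str}; hence Corollary \ref{cor:half} applies to the first term on the right and yields $\tr(\tilde\phi H^+(b)\tilde\phi)_-\leq C l^d h^{-d}(1+b_-^{d+1}hl^{-1})$. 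For the error term I use that $\omega$ is non-decreasing, so $\omega(l)\leq\omega(C_\Omega^{-1})=:C$ for $0<l\leq C_\Omega^{-1}$. Adding the two contributions gives the claim with a constant depending only on $d$, $C_\phi$ and $\omega$; one takes $C_\Omega$ no smaller than the threshold constant appearing in Lemma \ref{lem:str} (which already subsumes the requirements of the boundary-straightening construction).

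I do not expect a genuine obstacle: the statement is essentially a bookkeeping consequence of the three cited results. The only points that need a little care are (i) observing that constancy of $b$ collapses the quantities $b^\pm$ of Lemma \ref{lem:str} to $b$, so that the auxiliary half-space bound of Corollary \ref{cor:half} can be fed in without loss; (ii) checking that $\tilde\phi$ satisfies exactly the hypotheses required by Corollary \ref{cor:half}, namely the gradient bound \eqref{eq:int:gradphi} with the modified constant; and (iii) absorbing the geometric factor $\omega(l)$, together with the elementary bound $\int_\Omega\phi^2\leq C l^d$, into the final constant, which is possible because $0<l\leq C_\Omega^{-1}$ and $0<h\leq l$.
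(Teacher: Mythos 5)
Your argument is correct and follows exactly the route the paper indicates: the corollary is obtained by combining Proposition \ref{pro:bulk} (bulk case), Lemma \ref{lem:str} with $b^\pm=b$ (boundary case), and Corollary \ref{cor:half} for the resulting half-space term, with $\omega(l)\leq\omega(C_\Omega^{-1})$ absorbed into the constant. The case distinction and bookkeeping you supply are precisely what the paper leaves implicit.
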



\section{Localization}
\label{sec:loc:s1} 

In this section we construct the family of localization functions $(\phi_u)_{u \in \R^d}$ and prove Proposition~\ref{pro:loc:s1}. The key idea is to choose the localization depending on the distance to the complement of $\Omega$, see \cite[Theorem 17.1.3]{Hoe85} and \cite{SolSpi03} for a continuous version of this method.

Fix a real-valued function $\phi \in C_0^\infty(\R^d)$ with support in $\{|x| < 1\}$ and $\| \phi \|_2 = 1$. For $u, x \in \R^d$ let $J(x,u)$ be the Jacobian of the map $u \mapsto (x-u)/l(u)$. We define 
$$
\phi_u(x) \, = \, \phi \lk \frac{x-u}{l(u)} \rk \sqrt{J(x,u)} \, l(u)^{d/2} \, ,
$$
such that $\phi_u$ is supported in $\{ x \, : \, |x-u| < l(u) \}$.
By definition, the function $l(u)$ is smooth and satisfies $0 < l(u) \leq 1/2$ and $\left\| \nabla l \right\|_\infty \leq 1/2$. Therefore, according to \cite{SolSpi03}, the functions $\phi_u$ satisfy (\ref{eq:int:grad:s1}) and (\ref{eq:int:unity:s1}) for all $u \in \R^d$. 

To prove the upper bound in Proposition \ref{pro:loc:s1}, put
$$
\gamma \, = \, \int_{\R^d} \phi_u \, \lk \phi_u H(b) \phi_u \rk_-^0 \, \phi_u \, l(u)^{-d} \, du \, .
$$
Obviously, $\gamma \geq 0$ holds and in view of (\ref{eq:int:unity:s1}) also $\gamma \leq 1$, hence, by a variant of the variational principle discussed in the appendix,
$$
- \tr (H(b))_- \, \leq \, \tr \lk \gamma H(b) \rk \, =  \, -  \int_{\R^d} \tr \lk \phi_u H(b) \phi_u \rk_- l(u)^{-d} \, du \, .
$$

To prove the lower bound we use the IMS-formula. For $\phi \in C_0^\infty(\R^d)$ and $v \in H^1(\Omega)$ we have
$$
\frac 12 \nabla v \cdot \nabla \lk \phi^2 \overline v \rk  + \frac 12 \nabla \overline{v} \cdot \nabla \lk \phi^2 v \rk \, = \, \left| \nabla \lk \phi v \rk \right|^2  -  \left| \nabla \phi \right|^2 |v|^2  \, . 
$$
Combining this identity with the partition of unity (\ref{eq:int:unity:s1}) yields
\begin{equation}
\label{eq:loc:ims}
q_b[v] = \int_{\R^d} \lk q_b \left[ \phi_u v \right] - \lk v ,  h^2 (\nabla \phi_u)^2 v \rk_{L^2(\Omega)} \rk l(u)^{-d} \, du \, .
\end{equation}
Using (\ref{eq:int:grad:s1}) and (\ref{eq:int:unity:s1}) one can show \cite{SolSpi03}, for every $x \in \R^d$, 
$$
\int_{\R^d} (\nabla \phi_u)^2(x) l(u)^{-d} \, du \, \leq \, C \int_{\R^d} \phi_{u}^2(x) \, l(u)^{-d-2} \, du \, .
$$
We insert this into (\ref{eq:loc:ims}) and deduce
\begin{equation}
\label{eq:loc}
\tr \lk H(b) \rk_- \, \leq \, \int_{\Omega^*} \tr \lk \phi_u \lk H(b) - C h^2 l(u)^{-2} \rk \phi_u \rk_- \, l(u)^{-d} \, du \, ,
\end{equation}
where $\Omega^* = \{u \in \R^d \, : \, \textnormal{supp} \phi_u \cap \Omega \neq \emptyset \}$.
For any $u \in \R$, let $\rho_u$ be another parameter $0< \rho_u < 1$ and estimate
$$
\textnormal{Tr} \lk \phi_u ( H(b) - C h^2  l(u)^{-2} ) \phi_u \rk_-  \leq  \textnormal{Tr} \lk \phi_u H(b) \phi_u \rk_- +  \textnormal{Tr} \lk \phi_u (\rho_u H(b)-C h^2 l(u)^{-2}  ) \phi_u \rk_- \, .
$$

We now claim that choosing $\rho_u$ proportional to $h^2 l(u)^{-2}$ yields
\begin{equation}
\label{eq:loc:error}
\textnormal{Tr} \lk \phi_u (H(b)- C h^2  l(u)^{-2}) \phi_u \rk_- \leq \textnormal{Tr} \lk \phi_u  H(b) \phi_u \rk_- + C \frac{l(u)^{d-2}}{ h^{d-2}} \lk 1 + \frac{(b_m)_-^{d+1} h}{ l(u)} \rk \, .
\end{equation}
To see this, let us write $\tau_u = \rho_u / (\rho_u+Ch^2 l(u)^{-2})$ and note that $\tau_u < 1$ and 
$$
\textnormal{Tr} \lk \phi_u (\rho_u H(b)-C h^2 l(u)^{-2}  ) \phi_u \rk_- = Ch^2 l(u)^{-2} (1-\tau_u)^{-1} \tr ( \phi_u \tilde H( \sqrt{\tau_u} b) \phi_u )_- \, .
$$
Here $\tilde H$ is generated by the same quadratic form as $H$ but with $h$ replaced by $\sqrt{\tau_u} h$. 
If $\phi_u \cap \partial \Omega \neq \emptyset$, we have $l_0 /4 \leq l(u) \leq l_0/\sqrt 3$, see (\ref{eq:lulower}) and  (\ref{eq:int:luup:s1}), and we can apply Corollary~\ref{cor:simple} to estimate 
$$
\tr ( \phi_u \tilde H( \sqrt{\tau_u} b) \phi_u )_- \leq Cl(u)^d h^{-d} \tau_u^{-d/2} \lk 1+ (b_m)_-^{d+1} h l(u)^{-1} \rk \, .
$$
With our choice of $\rho_u$ proportional to $h^2 l(u)^{-2}$ we find that $\tau_u$ is order one and (\ref{eq:loc:error}) follows.
If $\phi_u \in C_0^\infty$ we can argue similarly by using the lower bound in Proposition \ref{pro:bulk} and get
\begin{equation}
\label{eq:loc:error2}
\textnormal{Tr} \lk \phi_u (H(b)- C h^2  l(u)^{-2}) \phi_u \rk_- \leq \textnormal{Tr} \lk \phi_u  H(b) \phi_u \rk_- + C \frac{l(u)^{d-2}}{ h^{d-2}}  \, .
\end{equation}

Finally, we insert (\ref{eq:loc:error}) and (\ref{eq:loc:error2})  into (\ref{eq:loc}) and arrive at
\begin{align*}
\tr \lk H(b)\rk_- \, \leq \, & \int_{\Omega^*} \tr \lk \phi_u  H(b) \phi_u \rk_- l(u)^{-d} du + C h^{-d+2}  \int_{\Omega \setminus U} l(u)^{-2} du \\
&+  C h^{-d+2}  \int_{U} \lk l(u)^{-2} + (b_m)_-^{d+1} h l(u)^{-3} \rk du \, ,
\end{align*}
where $U=\{ u \in \R^d \, : \, \partial \Omega \cap B_u \neq \emptyset \} $.  
Thus the claim of Proposition \ref{pro:loc:s1}  follows from  (\ref{eq:int:U1}) and (\ref{eq:int:U2}).

\appendix

\section{A geometric lemma}

In the proofs of Theorem \ref{thm:1} and Theorem \ref{thm:large} we used the following estimate.

\begin{lemma}
\label{lem:parallel}
For every domain $\Omega\subset\R^d$ with $\partial\Omega\in C^1$ there is a constant $C$ with the following property. For every $0<l_0\leq 1$ and $u \in \R^d$ let $l(u)$ be defined as in \eqref{eq:ludef} by 
$$
l(u) = \frac 12 \lk 1 + \lk \textnormal{dist}(u, \R^d \setminus \Omega )^2+l_0^2 \rk^{-1/2} \rk^{-1} \,.
$$
Then for any relatively open $N \subset \partial \Omega$ the set 
$$
U^* = \left\{u \in \R^d \, : \, \textnormal{dist}(u, \partial \Omega) < l(u) \, \wedge \,  \textnormal{dist}(u,\partial \Omega \setminus N ) > l(u) \right\}
$$
satisfies
$$
\limsup_{l_0 \da 0} \frac 1{l_0} |U^*|_d \leq C \sigma(N) \, .
$$
Here $|\cdot|_d$ denotes the $d$-dimensional Lebesgue measure on $\R^d$ and $\sigma(\cdot)$ denotes the $d-1$-dimensional surface measure on $\partial \Omega$.
\end{lemma}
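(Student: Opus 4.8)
The plan is to reduce the estimate to a local, nearly flat computation via the uniform $C^1$ charts of $\partial\Omega$, the genuine difficulty being that $N$ (or its image in a chart) may be an open set whose topological boundary has positive $(d-1)$-measure; this I would handle by a dichotomy on the size of $l(u)-\dist(u,\partial\Omega)$.

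First I would record two pointwise facts. If $u\in U^*$, then $t:=\dist(u,\partial\Omega)<l(u)$; since on the side $u\in\Omega$ one has $\dist(u,\R^d\setminus\Omega)=t$, hence $l(u)=\tfrac12(1+(t^2+l_0^2)^{-1/2})^{-1}$, while on the complementary side $\dist(u,\R^d\setminus\Omega)=0$, hence $l(u)=\ell_0:=\tfrac{l_0}{2(1+l_0)}$, solving $t<l(u)$ gives $t<l_0/\sqrt3$; thus $U^*$ lies in the $l_0$-tube around $\partial\Omega$ once $l_0$ is small. Moreover, viewing $l$ on each side as a function of $t=\dist(u,\partial\Omega)$, one computes $\tfrac{d}{dt}l\le\tfrac12$, so $g(t):=l(u)-t$ satisfies $g'\le-\tfrac12$. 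Consequently, for any $\eta>0$ the collar $B_{\mathrm{bad}}(\eta):=\{u:\,0\le l(u)-\dist(u,\partial\Omega)<\eta\}$ meets $\{\dist(\cdot,\partial\Omega)=s\}$ only for $s$ in a band of width $\le2\eta$; by the coarea formula and the uniform boundedness of $|\partial\Omega_s|$ (used also in the main text) this gives $|B_{\mathrm{bad}}(\eta)|\le C_\Omega\,\eta$.

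Next I would set up charts. By the uniform $C^1$ hypothesis, fix a small $r>0$, a finite cover of $\partial\Omega$ by balls $B(p_k,r)$, $p_k\in\partial\Omega$, with the enlarged balls $B(p_k,5r)$ having bounded overlap (overlap constant $m$, dimensional by a Besicovitch argument), and $C^1$ functions $f_k$ on domains $D_k$ with $\|\nabla f_k\|_\infty\le\varepsilon:=\omega(5r)$, such that $\partial\Omega\cap B(p_k,5r)$ is the graph of $f_k$. Put $N'_k=\{x'\in D_k:(x',f_k(x'))\in N\}$; then $\sum_k|N'_k|_{d-1}\le\sum_k\sigma\!\left(N\cap B(p_k,5r)\right)\le m\,\sigma(N)$. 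For $l_0$ small and $u\in U^*\cap B(p_k,2r)$, a nearest point $p(u)\in\partial\Omega$ lies on the graph over the interior of $D_k$ (being within $l_0\ll r$ of $u$), so $p(u)=(y',f_k(y'))$; since $p(u)\in B(u,l(u))\cap\partial\Omega\subseteq N$ we get $y'\in N'_k$, and the first‑order minimality condition gives $u'-y'=(u_d-f_k(y'))\nabla f_k(y')$, whence $|u'-y'|\le\varepsilon\,\dist(u,\partial\Omega)$ and $|u_d-f_k(y')|\le\dist(u,\partial\Omega)$. The core step is then the dichotomy in $s:=\sqrt{l(u)^2-\dist(u,\partial\Omega)^2}$. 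If $s\ge10\varepsilon\,l(u)$, an elementary estimate shows that every $(z',f_k(z'))$ with $|z'-y'|<s/4$ lies in $B(u,l(u))\cap\partial\Omega\subseteq N$, so $B^{d-1}(y',s/4)\subseteq N'_k$, and since $|u'-y'|\le\varepsilon\,l(u)\le s/10<s/4$ we conclude $u'\in N'_k$; in the remaining range $s<10\varepsilon\,l(u)$ one has $l(u)-\dist(u,\partial\Omega)<100\,\varepsilon^2\,l(u)<100\,\varepsilon^2\,l_0$, i.e. $u\in B_{\mathrm{bad}}(100\varepsilon^2l_0)$. Since $U^*\subseteq\bigcup_kB(p_k,2r)$ for small $l_0$ and the $u_d$-sections of $U^*$ have length $\le2l_0$,
$$
|U^*|\ \le\ \left|B_{\mathrm{bad}}(100\varepsilon^2l_0)\right|+\sum_k\left|\left(U^*\setminus B_{\mathrm{bad}}(100\varepsilon^2l_0)\right)\cap B(p_k,2r)\right|\ \le\ 100\,C_\Omega\,\varepsilon^2l_0+2l_0\sum_k|N'_k|_{d-1},
$$
so $\limsup_{l_0\downarrow0}l_0^{-1}|U^*|\le100\,C_\Omega\,\omega(5r)^2+2m\,\sigma(N)$. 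The left side does not depend on $r$; letting $r\downarrow0$, so that $\omega(5r)\downarrow0$, yields the lemma with $C=2m$.

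The main obstacle is exactly the possible irregularity of $N$: a naive comparison of $|U^*|$ with the volume of an $l_0$-neighbourhood of $N$ breaks down when $\partial N$ carries positive $(d-1)$-measure. The dichotomy circumvents this because the part of $U^*$ whose projection can escape $N'_k$ is trapped in $B_{\mathrm{bad}}(100\varepsilon^2l_0)$, whose volume carries the gain $\varepsilon^2=\omega(5r)^2$ that can be driven to zero by shrinking the charts, independently of $l_0$ (the limit quantity $\limsup l_0^{-1}|U^*|$ making no reference to $r$). The remaining ingredients — existence and interior location of the nearest point inside a chart, the bound $\tfrac{d}{dt}l\le\tfrac12$, the coarea estimate, and the bounded-overlap bookkeeping — are routine.
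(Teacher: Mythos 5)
Your argument is correct, and it takes a genuinely different route from the paper. The paper also isolates the monotonicity of $t\mapsto l-t$ (via the function $F_{l_0}$, giving the gap $r=F_{l_0}((1-\epsilon)L_{l_0})$ after discarding an $\epsilon$-layer of measure $O(\epsilon l_0)$), but then it handles the possible irregularity of $N$ by a regularization device: it builds the set $\tilde N\subset N$ as a union of $r$-balls around nearest boundary points, observes that $\tilde N$ satisfies a uniform interior ball condition, proves $\sigma(\partial\tilde N)=0$ by a separate cube-counting lemma (Lemma \ref{lem:lebesgue}), bounds $\sigma(U^*_t)$ by $C\bigl(\sigma(N)+\sigma(\{x:\dist(x,\tilde N)<l_0\})\bigr)$, and kills the second term as $l_0\da 0$ by dominated convergence. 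You instead work in uniform $C^1$ graph charts and replace the regularization by the quantitative dichotomy in $s=\sqrt{l(u)^2-\dist(u,\partial\Omega)^2}$: either the ball $B(u,l(u))$ reaches a full $(d-1)$-disc of the graph around the foot point (so the projection of $u$ lands in the projection of $N$, and Fubini gives the $Cl_0\sigma(N)$ bound), or $u$ lies in the thin collar $B_{\mathrm{bad}}(C\omega(5r)^2 l_0)$, whose measure carries the gain $\omega(5r)^2$ that you remove by sending the chart scale $r\da 0$ \emph{after} $l_0\da 0$. What each approach buys: yours avoids the auxiliary measure-zero lemma and the construction of $\tilde N$ entirely, at the cost of the chart bookkeeping (bounded overlap, nearest point interior to the chart, first-order condition) and a two-parameter limit; the paper's argument is chart-free in its core estimate and needs only one limit in $l_0$, but relies on the extra geometric-measure lemma. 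Your quantitative step checks out (with $\varepsilon t\le s/10$ one gets $|u-(z',f_k(z'))|^2\le t^2+s^2(1/16+1/10+1/100+1/16)<l(u)^2$), and the uniform bound on the areas of the level sets of the distance function that you invoke via the coarea formula is the same fact the paper itself uses; only immaterial constants (e.g.\ the vertical sections have length $\le 4l_0$ rather than $2l_0$, so $C=4m$ rather than $2m$) would need adjusting.
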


\begin{proof}
We split $U^*$ into two parts $U^*_i = U^* \cap \Omega$ and $U^*_o = U^* \cap \R^d \setminus \Omega$ and we prove the assertion separately for each of them. We begin with $U^*_i$. Note that for $u \in \Omega$ we have $ \textnormal{dist}(u,\R^d \setminus  \Omega) = \textnormal{dist}(u,\partial \Omega)$. We first argue that there is a constant $L_{l_0}$ such that
\begin{equation}
\label{eq:ul}
U^*_i = \left\{u \in \Omega \, : \, \textnormal{dist}(u, \partial \Omega) < L_{l_0} \, \wedge \, \textnormal{dist}(u,\partial \Omega \setminus N ) > l(u) \right\}
\end{equation}
and such that $l_0/4\leq L_{l_0}\leq l_0/\sqrt 3$.

To prove \eqref{eq:ul} let us consider the function
$$
F_{l_0}(x) = \frac 12 \lk 1+ \lk x^2+l_0^2 \rk^{-1/2} \rk^{-1} - x \, , \quad x \geq 0 \, .
$$
This function is continuously differentiable and satisfies $F_{l_0}(0) = l_0/(2(l_0+1)) > 0$, $F_{l_0}(x) \leq 0$ for $x \geq 1/2$, and 
$$
F_{l_0}'(x) = \frac x2 \lk x^2+l_0^2 \rk^{-1/2} \lk 1+ \lk x^2+l_0^2 \rk^{1/2} \rk^{-2} - 1 \leq - \frac 12
$$
for all $x \geq 0$. Hence, there is a unique $L_{l_0} \in (0,1/2]$ with $F_{l_0}(L_{l_0}) = 0$. Moreover, since $F_{l_0}(l_0/4)<0<F_{l_0}(l_0/\sqrt 3)$, we have $l_0/4< L_{l_0}< l_0/\sqrt 3$.

By definition, all $u \in \Omega$ with $ \textnormal{dist}(u,\partial \Omega) = L_{l_0}$ satisfy $F_{l_0}( \textnormal{dist}(u,\partial \Omega) ) = 0$, thus $l(u) =  \textnormal{dist}(u,\partial \Omega) = L_{l_0}$. The fact that $F_{l_0}$ is decreasing shows that the inequality $ \textnormal{dist}(u,\partial \Omega) < L_{l_0}$ implies $F_{l_0}( \textnormal{dist}(u,\partial \Omega)) > 0$, thus $\textnormal{dist}(u,\partial \Omega)<l(u)$. Similarly, the inequality $\textnormal{dist}(u,\partial \Omega) < l(u)$ implies $ \textnormal{dist}(u,\partial \Omega) < L_{l_0}$. This proves \eqref{eq:ul}.

Our next step is to fix an $0<\epsilon <1$ and to decompose $U^*_i = U^*_> \cup U^*_\epsilon$ with
\begin{align*}
U^*_> &= \left\{u \in \Omega \, : \, \textnormal{dist}(u, \partial \Omega) < (1-\epsilon)L_{l_0} \, \wedge \, \textnormal{dist}(u,\partial \Omega \setminus N ) > l(u) \right\} \, \\
U^*_\epsilon &= \left\{u \in \Omega \, : \, (1-\epsilon)L \leq \textnormal{dist}(u, \partial \Omega) < L_{l_0} \, \wedge \, \textnormal{dist}(u,\partial \Omega \setminus N ) > l(u) \right\} \,.
\end{align*}
Thus,
\begin{equation*}
|U_i^*|_d \leq |U_>^*|_d +|U_\epsilon^*|_d \, .
\end{equation*}
The second term on the right side can easily be bounded,
\begin{equation*}
|U_\epsilon^*|_d \leq \left| \left\{ u \in \Omega \, : \, (1-\epsilon)L \leq \textnormal{dist}(u, \partial \Omega) < L \right\} \right|_d \leq \int_{(1-\epsilon)L_{l_0}}^{L_{l_0}} \sigma \lk \partial \Omega_t \rk dt \leq C l_0 \epsilon \, .
\end{equation*}
Here we wrote $\partial \Omega_t = \{ u \in \Omega : \textnormal{dist}(u, \partial \Omega)  = t \}$ and used the facts that $\sigma(\partial \Omega_t)$ is uniformly bounded and that $L_{l_0} \leq l_0/\sqrt 3$.

After these steps we have reduced the lemma to proving that
\begin{equation}
\label{eq:ugoal}
\limsup_{l_0 \da 0} \frac 1{l_0} |U^*_>|_d \leq C \sigma(N)
\end{equation}
with a constant $C$ independent of $\epsilon$. To do so we start from the representation
\begin{equation}
\label{eq:ufub}
|U^*_>|_d = \int_0^{(1-\epsilon)L_{l_0}} \sigma(U_t^*) \, dt \, ,
\end{equation}
where
$$
U^*_t = \left\{u \in \Omega \, : \, \textnormal{dist}(u, \partial \Omega) = t \, \wedge \, \textnormal{dist}(u,\partial \Omega \setminus N ) > l(u) \right\} \, , \quad 0 \leq t < (1-\epsilon) L_{l_0} \, .
$$

Recall that every $u\in U^*$ and, in particular, every $u\in U^*_>$ satisfies $\dist(u, \partial \Omega)<l(u)$. We now claim that for every $0<\epsilon<1$ and every $0<l_0\leq 1$ there is an $r>0$ such that every $u\in U^*_>$ satisfies
$$
l(u) > \dist(u,\partial \Omega) + r \,.
$$
This follows again from the monotonicity and continuity of the function $F_{l_0}$. Indeed, we can set $r= F_{l_0} ((1-\epsilon) L_{l_0})$.

We consider the set
$$
\tilde N := \bigcup_{0<t<(1-\epsilon)L_{l_0}}\ \bigcup_ {u \in U_t^*} \ \bigcup_{x\in \partial \Omega,\, |x-u|=t} \{y\in\R^d:\ |y-x|<r\} \cap \partial \Omega
$$
and show that
\begin{equation}
\label{eq:nntilde}
\tilde N \subset N
\end{equation}
and
\begin{equation}
\label{eq:measurezero}
\sigma( \partial \tilde N ) = 0 \,.
\end{equation}

To prove \eqref{eq:nntilde} let $0<t<(1-\epsilon)L_{l_0}$, $x,y\in \partial\Omega$ with $|x-y|<r$ and $u\in U^*_t$ with $|x-u|=t$. Then
$$
|y-u| \leq |y-x|+ |x-u| < r + \dist(u,\partial\Omega) < l(u) \,.
$$
Since $\dist(u,\partial\Omega\setminus N)>l(u)$ by the definition of $U^*_t$, we infer that $y\in N$. This proves \eqref{eq:nntilde}.

To prove \eqref{eq:measurezero} we note that $\tilde N$ satisfies the following uniform interior ball condition. For each $y \in \partial \tilde N$ there is an open ball $B\subset\R^d$ of radius $r$ such that $y \in \partial B$ and $B \cap \partial \Omega \subset \tilde N$. In order to prove \eqref{eq:measurezero} we introduce local coordinates similarly as in Section \ref{sec:straight}. In this way we are reduced to the situation where $\tilde N$ is a subset of $\R^{d-1}$ satisfying a uniform interior ball condition (with a possibly smaller radius). The claim \eqref{eq:measurezero} follows from Lemma \ref{lem:lebesgue} below.

The definition of $\tilde N$ easily implies that
$$
U^*_t \subset \tilde U^*_t := \left\{u \in \Omega \, : \, \dist(u, \partial \Omega) = t \, \wedge \, \dist(u, \tilde N ) = t \right\}
$$
for all $0 \leq t < (1-\epsilon)L_{l_0}$. Moreover, we can estimate with a constant depending only on $\Omega$
\begin{align*}
\sigma( \tilde U^*_t ) &\leq C \lk \sigma ( \tilde N ) + \sigma ( \{ x \in \partial \Omega \setminus \tilde N  :  \textnormal{dist}(x,\tilde N) < t \} ) \rk \\
&\leq C \lk \sigma (  N ) + \sigma ( \{ x \in \partial \Omega \setminus \tilde N  :  \textnormal{dist}(x,\tilde N) < l_0 \} ) \rk \,.
\end{align*}
The second bound used \eqref{eq:nntilde} as well as $(1-\epsilon)L_{l_0} \leq (1-\epsilon) l_0/\sqrt 3 \leq l_0$. Thus, from \eqref{eq:ufub},
\begin{equation*}
|U_>^*|_d \leq Cl_0 \lk \sigma (  N ) + \sigma ( \{ x \in \partial \Omega \setminus \tilde N  :  \textnormal{dist}(x,\tilde N) < l_0 \} ) \rk \, .
\end{equation*}
Therefore, in order to prove \eqref{eq:ugoal}, it remains to estimate
$$
\sigma (\{ x \in \partial \Omega \setminus \tilde N  : \textnormal{dist}(x,\tilde N) < l_0 \} ) = \int_{\partial \Omega} \chi_{l_0} (y) d\sigma(y) \, ,
$$
where $\chi_{l_0}$ denotes the characteristic function of $\{x \in \partial \Omega \setminus \tilde N  : \textnormal{dist}(x,\tilde N) < l_0 \}$. We note that $\lim_{l_0 \da 0} \chi_{l_0} = \chi_{\partial \tilde N}$ pointwise. Thus, the dominated convergence theorem and \eqref{eq:measurezero} imply that
$$
\lim_{l_0 \da 0} \sigma (\{ x \in \partial \Omega \setminus \tilde N  : \textnormal{dist}(x,\tilde N) < l_0 \} ) = 0 \, .
$$
This completes the proof of \eqref{eq:ugoal}.

For $U^*_o$ we get an analoguous bound by following the same strategy. In this case the estimates are somewhat simpler since, for $u \in \R^d \setminus \Omega$, we have $l(u) \equiv \frac 12 l_0/(l_0+1)$ and this plays the role of $L_{l_0}$.
\end{proof}

\begin{lemma}
\label{lem:lebesgue}
Let $A \subset \R^n$ be bounded. Assume that there is $\rho > 0$ such that for each $x \in \partial A$ there is a ball $B \subset \R^n$ of radius $\rho$ with $x \in \partial B$ and $B \subset A$. Then $| \partial A|_{n} = 0$.
\end{lemma}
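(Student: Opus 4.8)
The plan is to deduce this from the Lebesgue density theorem, once one observes that the uniform interior ball condition prevents $\partial A$ from having any point of density one. First I would record the trivial structural facts: $\partial A$ is closed, hence Borel measurable, and bounded, hence of finite Lebesgue measure, so that the density theorem is applicable to the set $\partial A$ itself.

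The geometric heart of the matter is a purely local estimate, to be verified by an elementary one-variable computation. Fix $x\in\partial A$ and let $B=B(c,\rho)\subset A$ be the open ball furnished by the hypothesis, so that $|x-c|=\rho$; put $\nu=(c-x)/\rho$, a unit vector. Since $B$ is open and contained in $A$ it lies in $\textnormal{int}\,A$, hence $B\cap\partial A=\emptyset$. Expanding $|y-c|^2=|y-x|^2-2\rho\,(y-x)\cdot\nu+\rho^2$ shows that for small $r>0$ every $y$ with $|y-x|<r$ and $(y-x)\cdot\nu>r^2/(2\rho)$ already lies in $B$, whence
$$
B(x,r)\setminus B\ \subset\ \bigl\{\,y:\ |y-x|<r,\ (y-x)\cdot\nu\le r^2/(2\rho)\,\bigr\}.
$$
After rescaling by $r$, the right-hand side has measure $r^n$ times the volume of the portion of the unit ball lying on one side of a hyperplane at distance $r/(2\rho)$ from the origin, and this converges to $\tfrac12\omega_n$ as $r\da 0$. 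Since $\partial A\cap B(x,r)\subset B(x,r)\setminus B$, I obtain
$$
\limsup_{r\da 0}\ \frac{|\partial A\cap B(x,r)|_n}{|B(x,r)|_n}\ \le\ \frac12
$$
at \emph{every} point $x\in\partial A$.

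It then remains only to invoke the Lebesgue density theorem: Lebesgue-almost every point of the measurable set $\partial A$ is a point of density one of $\partial A$. Comparing this with the displayed bound, which holds at every point of $\partial A$, we see that $\partial A$ has no density points at all, and therefore $|\partial A|_n=0$, as claimed.

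The only step that requires any care is the comparison of $B(x,r)\cap B$ with a half-ball, i.e.\ the assertion that a ball internally tangent to $B(x,r)$ at $x$ occupies asymptotically half the volume of $B(x,r)$; but this is a routine estimate with two balls, and since the crude bound ``upper density $\le\tfrac12<1$'' already suffices, no sharp constants need to be tracked, so I expect no genuine obstacle here.
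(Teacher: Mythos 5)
Your proof is correct, and it takes a genuinely different route from the paper. You reduce the lemma to the Lebesgue density theorem: the tangent interior ball $B=B(c,\rho)\subset\mathrm{int}\,A$ is disjoint from $\partial A$, and your computation $|y-c|^2=|y-x|^2-2\rho\,(y-x)\cdot\nu+\rho^2$ correctly shows that $B(x,r)\setminus B$ is contained in the slab $\{(y-x)\cdot\nu\le r^2/(2\rho)\}$, so the upper density of $\partial A$ at every one of its points is at most $\tfrac12<1$; since $\partial A$ is closed, hence measurable, positive measure would force a density point, a contradiction. The paper instead runs an elementary multiscale box-counting argument: it covers $\partial A$ by cubes of side $\delta\rho\,5^{-m}$ and uses the interior ball condition to show that, when a boundary cube is subdivided into $5^n$ subcubes, at least one subcube lies entirely in $A$, giving $\nu_m\le(5^n-1)\nu_{m-1}$ and hence $|\partial A|_n\le l_m^n\nu_m\to0$. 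Your argument is shorter and avoids the combinatorial bookkeeping, at the price of invoking the (standard but nontrivial) density theorem; the paper's argument is self-contained and quantitatively stronger, since the geometric decay of the covering numbers bounds the upper box-counting dimension of $\partial A$ strictly below $n$, whereas the density argument yields only the Lebesgue-null conclusion -- which is, however, all that is needed in the application to \eqref{eq:measurezero}.
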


\begin{proof}
Let $\delta>0$ be a constant to be specified later and put $l_m = \delta \rho 5^{-m}$ for $m \geq 0$. We denote by $\mathcal Q_m$ the collection of open cubes of side length $l_m$ centered at points in $(l_m\Z)^n$. Let $\mathcal C_m$ be the collection of those cubes in $\mathcal Q_m$ that intersect both $A$ and $\R^n\setminus A$. Since $A$ is bounded, $\nu_m := \#\mathcal C_m$ is finite. We claim that for all sufficiently small $\delta>0$ there is a constant $M<5^n$ such that for all $m\geq 1$
\begin{equation}
\label{eq:intersectionno}
\nu_m \leq M \nu_{m-1} \,.
\end{equation}
Deferring the proof of this bound for the moment we now explain why it implies the lemma. First, we iterate \eqref{eq:intersectionno} to learn that $\nu_m \leq M^m \nu_0$. Thus, since $\partial A \subset\bigcup_{Q\in\mathcal C_m} \overline Q$ for any $m$, we conclude that
$$
|\partial A|_n \leq \sum_{Q\in\mathcal C_m} |Q|_n = l_m^n \nu_m \leq \delta^n \rho^n (5^{-n } M)^m \nu_0 \to 0
\qquad\text{as}\ m\to\infty \,.
$$
This proves $|\partial A|_n=0$ and we are left with showing \eqref{eq:intersectionno}.

To do so, we fix $m\geq 1$ and an arbitrary cube $Q\subset\mathcal C_{m-1}$. When passing from $m-1$ to $m$, this cube is subdivided into $5^n$ cubes in $\mathcal Q_m$. We shall show that if $\delta>0$ is sufficently small then at least one of these cubes of side length $l_m$ does not belong to $\mathcal C_m$ (i.e., does not intersect both $A$ and $\R^n\setminus A$). This will imply \eqref{eq:intersectionno} with $M=5^n -1$.

Consider the cube $Q'\in \mathcal Q_{m}$ in the center of $Q$. If this cube does not belong to $\mathcal C_m$ we are done. Thus, we may assume that $Q'$ intersects both $A$ and $\R^n\setminus A$. Because of our assumption on $\partial A$ there is an open ball $B$ of radius $\rho$ such that $B\subset A$ and $\partial B\cap Q' \neq \emptyset$. We now make use of the following

\emph{Claim.} There is a constant $C_n>0$ such that if $B\subset\R^n$ is an open ball of radius $r\geq C_n$ with $\overline B\cap Q\neq\emptyset$, where $Q=(-1/2,1/2)^n$, then $\gamma+Q\subset B$ for some $\gamma\in\Z^n$ with $|\gamma|_\infty \leq 2$.

Indeed, one can take $C_n=\max\{\sqrt n, n/2\}$. The proof of this claim uses only elementary geometric facts and is omitted. 

By a rescaled version of the claim we infer that, under the assumption that $\rho\geq C_n l_m$, there is a cube which is contained in $B$ and whose center is at most an $\infty$-distance $2l_m$ away from that of $Q'$. Since $Q'$ lies in the center of $Q$ this cube is also contained in $Q$. Moreover, since it is contained in $B$, it is also contained in $A$ and, therefore, does not belong to $\mathcal C_m$.

Finally, we argue that for all $\delta>0$ small enough the assumption $\rho \geq C_n l_m$ is satisfied for all $m\geq 1$. Indeed, this assumption is equivalent to $1\geq C_n \delta 5^{-m}$, which holds uniformly in $m\geq 1$ provided we choose $\delta\leq 5 C_n^{-1}$. This completes the proof.
\end{proof}


\section{A variant of the variational principle and a sharp bound on $\tr(-\Delta_b - \Lambda)_-$}
\label{ap:var}

Here we mention the following extension of the variational principle that we used in the proof of Proposition \ref{pro:half}.

Let $(M,\mu)$ be a measure space and let $(f_\alpha)_{\alpha \in M}$ be a measurable family of functions in a separable Hilbert space $\mathcal G$, such that 
\begin{equation}
\label{eq:ap:1}
\int_{M} \left| \lk \psi , f_\alpha \rk \right|^2 d\mu(\alpha) \leq \| \psi \|^2
\end{equation}
for all $\psi \in \mathcal G$. Assume that $A$ is a self-adjoint, lower semibounded operator in $\mathcal G$ with quadratic form $a$ such that 
\begin{equation}
\label{eq:ap:2}
f_\alpha \in \mbox{dom}[a]
\end{equation}
for all $\alpha \in M$.

Let the operator $\gamma$ in $\mathcal G$ be given by $\gamma \psi = \int_M ( f_\alpha, \psi) f_\alpha d\mu(\alpha)$. Then $\gamma$ satisfies $0 \leq \gamma \leq 1$. Let us introduce the notation
$$
\tr A \gamma = \int_M a\left[ f_\alpha \right] d\mu(\alpha) \, .
$$
Then we have
\begin{equation}
\label{eq:traces}
- \tr A_- \leq \tr A\gamma \, ,
\end{equation}
provided $\int_M a[f_\alpha]_- d\mu(\alpha) < \infty$.

Let us illustrate these notions by adding the following sharp estimate, a simple form of the upper in Proposition \ref{pro:half}, which is based on a method introduced in \cite{Kro92}.
Here we only assume that the boundary of $\Omega \subset \R^d$ is Lipschitz continuous and that $-\Delta_b$ is generated by the quadratic form given in \eqref{eq:basicform}.
\begin{proposition}
For $\phi \in C_0^1(\R^d)$ and $\Lambda > 0$
\begin{align*}
\tr \lk \phi \lk -\Delta_b - \Lambda \rk \phi \rk_- \geq \, & L^{(1)}_d \Lambda^{1+d/2} \int_\Omega |\phi(x)|^2 dx \\
&- \frac{\omega_d}{(2\pi)^d} \Lambda^{d/2} \lk \int_{\partial \Omega} b(x) |\phi(x)|^2 d\sigma(x) + \int_\Omega |\nabla \phi|^2 dx \rk \, .
\end{align*}
\end{proposition}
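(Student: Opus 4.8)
The plan is to use the Berezin--Kröger method, i.e.\ to apply the variant of the variational principle \eqref{eq:traces} in the Hilbert space $\mathcal G = L^2(\Omega)$ with $A$ the operator $\phi(-\Delta_b-\Lambda)\phi$, understood through its quadratic form
\[
a[u] = \int_\Omega |\nabla(\phi u)|^2\,dx + \int_{\partial\Omega} b(x)\,|\phi(x)u(x)|^2\,d\sigma(x) - \Lambda \int_\Omega |\phi(x)u(x)|^2\,dx \,,
\]
and with the family of trial functions $f_\xi(x) = (2\pi)^{-d/2} e^{i\xi\cdot x}$, $x\in\Omega$, indexed by $\xi$ in the open ball $M = \{\xi\in\R^d : |\xi|^2 < \Lambda\}$ equipped with Lebesgue measure. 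Since $\phi$ has compact support, $a$ is bounded below and $\tr A_-<\infty$ (one has $A \geq -\phi(-\Delta_b-\Lambda)_-\phi$, a finite-rank negative perturbation of a nonnegative operator, because $-\Delta_b$ has discrete spectrum on the bounded Lipschitz domain $\Omega$). Then \eqref{eq:traces} gives $\tr A_- \geq -\int_M a[f_\xi]\,d\xi$, and it remains to evaluate this integral and show it equals minus the right-hand side of the claim.

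First I would check the hypotheses of the variational principle. Because $\phi\in C_0^1(\R^d)$, the product $\phi f_\xi$ lies in $C_0^1(\R^d)\subset H^1(\Omega)$, the form domain of $-\Delta_b$, so $f_\xi\in\mathrm{dom}[a]$ and \eqref{eq:ap:2} holds. For \eqref{eq:ap:1}, the quantity $(\psi,f_\xi)$ is, up to the normalisation, the Fourier transform of $\psi$ extended by zero to all of $\R^d$, so Plancherel's theorem yields $\int_{\R^d}|(\psi,f_\xi)|^2\,d\xi = \|\psi\|_{L^2(\Omega)}^2 = \|\psi\|^2$, and restricting the $\xi$-integration to $M\subset\R^d$ only decreases the left-hand side. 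Finally $\int_M a[f_\xi]_-\,d\xi<\infty$ since $M$ is bounded and $\xi\mapsto a[f_\xi]$ is a polynomial in $\xi$.

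The evaluation of $\int_M a[f_\xi]\,d\xi$ is then explicit. With $\phi f_\xi = (2\pi)^{-d/2}\phi(x)e^{i\xi\cdot x}$ one has $|\phi f_\xi|^2 = (2\pi)^{-d}|\phi|^2$ and $|\nabla(\phi f_\xi)|^2 = (2\pi)^{-d}\bigl(|\nabla\phi|^2 + |\xi|^2|\phi|^2 + 2\,\xi\cdot\mathrm{Im}(\overline\phi\,\nabla\phi)\bigr)$, and the last term integrates to zero over the symmetric set $M$. Hence
\[
(2\pi)^d \int_M a[f_\xi]\,d\xi = |M|\Bigl(\int_\Omega |\nabla\phi|^2\,dx + \int_{\partial\Omega} b\,|\phi|^2\,d\sigma\Bigr) + \Bigl(\int_M |\xi|^2\,d\xi - \Lambda|M|\Bigr)\int_\Omega |\phi|^2\,dx \,.
\]
Using $|M| = \omega_d\Lambda^{d/2}$ and $\int_M|\xi|^2\,d\xi = \frac{d}{d+2}\,\omega_d\,\Lambda^{1+d/2}$, the bracket multiplying $\int_\Omega|\phi|^2$ equals $\omega_d\Lambda^{1+d/2}\bigl(\frac{d}{d+2}-1\bigr) = -\frac{2}{d+2}\,\omega_d\,\Lambda^{1+d/2}$, which by the definition $L^{(1)}_d = \frac{2}{d+2}(2\pi)^{-d}\omega_d$ stated after \eqref{eq:neumann} is precisely $-(2\pi)^d L^{(1)}_d\Lambda^{1+d/2}$. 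Therefore
\[
\int_M a[f_\xi]\,d\xi = -L^{(1)}_d\Lambda^{1+d/2}\int_\Omega |\phi|^2\,dx + \frac{\omega_d}{(2\pi)^d}\,\Lambda^{d/2}\Bigl(\int_{\partial\Omega} b\,|\phi|^2\,d\sigma + \int_\Omega |\nabla\phi|^2\,dx\Bigr) \,,
\]
and combining this with $\tr A_- \geq -\int_M a[f_\xi]\,d\xi$ gives the assertion.

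There is essentially no serious obstacle: this is a textbook application of the Berezin--Kröger trick. The only points needing a little care are the interpretation of $\int_M a[f_\xi]\,d\xi$ as $\tr A\gamma$ — which is exactly what the variant \eqref{eq:traces} of the variational principle supplies, and which is what allows us to work only with the form \eqref{eq:basicform} and hence avoid any regularity on $\partial\Omega$ beyond the Lipschitz assumption — and the \emph{observation} that the Plancherel identity only improves upon passing from $\R^d$ to the subset $M$.
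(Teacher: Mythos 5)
Your proposal is correct and follows exactly the paper's own argument: the coherent-states/Kr\"oger trick with $\mathcal G = L^2(\Omega)$, $M=\{\xi:|\xi|^2\leq\Lambda\}$, $f_\xi(x)=(2\pi)^{-d/2}e^{i\xi\cdot x}$, and the variational principle \eqref{eq:traces}. You merely carry out explicitly the verification of \eqref{eq:ap:1}, \eqref{eq:ap:2} and the elementary $\xi$-integration that the paper leaves to the reader, and these computations are accurate.
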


\begin{proof}
To adopt the notation introduced above, we set $\mathcal G = L^2(\Omega)$, $M = \{\xi \in \R^d : |\xi|^2 \leq \Lambda \}$ and $\mu$ to be Lebesgue measure. If we choose $f_\xi(x) = (2\pi)^{-d/2} e^{ix \cdot \xi}$ then \eqref{eq:ap:1} and \eqref{eq:ap:2} are satisfied and the claim follows from \eqref{eq:traces}.
\end{proof}

If we choose $\phi \equiv 1$ on $\Omega$ we get
$$
\tr \lk  -\Delta_b - \Lambda \rk_- \geq  L^{(1)}_d |\Omega| \Lambda^{1+d/2}  - \frac{\omega_d}{(2\pi)^d} \int_{\partial \Omega} b(x) d\sigma(x)  \Lambda^{d/2} \, .
$$
This generalizes the bound proved in \cite{Kro92} for the case of Neumann boundary conditions.


\begin{thebibliography}{Wey12}

\bibitem[AS64]{AbrSte64}
M.~Abramowitz and I.~A.~Stegun, \emph{Handbook of mathematical functions
  with formulas, graphs, and mathematical tables}, National Bureau of Standards
  Applied Mathematics Series, vol.~55, 1964.

\bibitem[BS80]{BirSol80}
M.~{\v{S}}. Birman and M.~Z. Solomjak, \emph{Quantitative analysis in {S}obolev
  imbedding theorems and applications to spectral theory}, American
  Mathematical Society Translations, Series 2, vol. 114, American Mathematical
  Society, Providence, R.I., 1980.

\bibitem[BG90]{BrGi90}
T. P. Branson and P. B. Gilkey, \emph{The asymptotics of the {L}aplacian on a manifold with boundary}, Comm. Partial Differential Equations \textbf{15} (1990), no. 2, 245--272.

\bibitem[FG11]{FraGei11}
R.~L. Frank and L.~Geisinger, \emph{Two-term spectral asymptotics of the
  {D}irichlet {L}aplacian on a bounded domain}, Mathematical {R}esults in
  {Q}uantum {P}hysics: {P}roceedings of the {Q}math11 {C}onference (Pavel
  Exner, ed.), World Scientific Publishing Company, 2011, pp.~138--147.

\bibitem[FG12]{FraGei12}
\bysame, \emph{Refined semiclassical asymptotics for fractional powers of the
  {L}aplace operator}, submitted (2012).

\bibitem[H{\"o}r85]{Hoe85}
L.~H{\"o}rmander, \emph{The analysis of linear partial differential operators,
  vol. 4}, Springer-Verlag, Berlin, 1985.

\bibitem[Ivr80a]{Ivr80a}
V.~Ja. Ivrii, \emph{The second term of the spectral asymptotics for the
  {L}aplace-{B}eltrami operator on manifolds with boundary}, Funktsional. Anal.
  i Prilozhen. \textbf{14} (1980), no.~2, 25--34.

\bibitem[Ivr80b]{Ivr80b}
\bysame, \emph{The second term of the spectral asymptotics for the
  {L}aplace-{B}eltrami operator on manifolds with boundary and for elliptic
  operators acting in vector bundles}, Soviet Math. Dokl. \textbf{20} (1980),
  no.~1, 1300--1302.

\bibitem[Ivr98]{Ivr98}
\bysame, \emph{Microlocal analysis and precise spectral asymptotics}, Springer
  Monographs in Mathematics, Springer-Verlag, Berlin, 1998.

\bibitem[Kr{\"o}92]{Kro92}
P.~Kr{\"o}ger, \emph{Upper bounds for the {N}eumann eigenvalues on a bounded
  domain in {E}uclidean space}, J. Funct. Anal. \textbf{106} (1992), no.~2,
  353--357.

\bibitem[MS67]{McKSin67}
H. P. McKean Jr. and I.M. Singer, \emph{Curvature and the eigenvalues of the {L}aplacian}. J. Differential Geometry \textbf{1} (1967), no. 1, 43--69.

\bibitem[Ple54]{Ple54}
\AA. Pleijel, \emph{A study of certain {G}reen's functions with applications in the theory of vibrating membranes}. 
Ark. Mat. \textbf{2} (1954), 553--569.

\bibitem[SS03]{SolSpi03}
J.~P. Solovej and W.~L. Spitzer, \emph{A new coherent states
  approach to semiclassics which gives {S}cott's correction}, Comm. Math. Phys.
  \textbf{241} (2003), no.~2-3, 383--420.

\bibitem[SV97]{SafVas97}
Y.~Safarov and D.~Vassiliev, \emph{The asymptotic distribution of eigenvalues
  of partial differential operators}, Translations of Mathematical Monographs,
  155, American Mathematical Society, Providence, RI, 1997.

\bibitem[Wey12]{Wey12a}
H.~Weyl, \emph{Das asymptotische {V}erteilungsgesetz der {E}igenwerte linearer
  partieller {D}ifferentialgleichungen (mit einer {A}nwendung auf die {T}heorie
  der {H}ohlraumstrahlung)}, Math. Ann. \textbf{71} (1912), no.~4, 441--479.

\end{thebibliography}

\end{document}